\newcommand{\ra}[1]{\renewcommand{\arraystretch}{#1}}
\numberwithin{equation}{section}
\newtheorem{theorem}{Theorem}[section]
\newtheorem{lemma}[theorem]{Lemma}
\newtheorem{proposition}[theorem]{Proposition}
\theoremstyle{definition}
\theoremstyle{remark}
\begin{document}

\title {Superconvergent {I}nterpolatory  {HDG} methods for reaction diffusion equations {II}: {HHO}-inspired methods}
\author{Gang Chen%\fnref{myfootnote}}
	\thanks{School of Mathematics, Sichuan University, Chengdu, China, cglwdm@scu.edu.cn}
	\and
	Bernardo Cockburn\thanks {School of Mathematics, University of Minnesota, Minneapolis, MN, cockburn@math.umn.edu}
	\and
	John~R.~Singler\thanks {Department of Mathematics and Statistics, Missouri University of Science and Technology, Rolla, MO, singlerj@mst.edu}
	\and
	Yangwen Zhang\thanks {Department of Mathematical Science, University of Delaware, Newark, DE, ywzhangf@udel.edu}
}

\date{}
\maketitle

	\begin{abstract}
	In {[J. Sci. Comput., 81:2188-2212, 2019], we
    considered a superconvergent hybridizable discontinuous Galerkin (HDG) method, defined on simplicial meshes, for scalar reaction-diffusion equations and showed how to define 
    an interpolatory version which maintained its convergence properties. The interpolatory approach
    uses a locally postprocessed approximate solution to evaluate the nonlinear term, and assembles all HDG matrices once before the time intergration leading to a reduction in computational cost. The resulting method
    displays a superconvergent rate for the solution for polynomial degree $k\ge 1$.}  In this work, we {take advantage of the link found between the HDG and the hybrid high-order (HHO) methods, in }[ESAIM Math.\ Model.\ Numer.\ Anal., 50(3):635–650, 2016] {and extend this idea to  the new, HHO-inspired HDG methods, defined on meshes made of general polyhedral elements, uncovered therein}. We prove that the resulting interpolatory HDG methods converge at the same rate as for the linear elliptic problems. Hence, we obtain superconvergent methods for $k\ge 0$ by some methods. We  present numerical results to illustrate the convergence theory.
	\end{abstract}
	
	\textbf{Keywords} {Hybrid high order methods, hybridizable discontinuous Galerkin methods}, interpolatory method, superconvergence.

	\section{Introduction}

	{This is the third in a series of papers devoted to
	the devising of interpolatory HDG methods for the
    scalar reaction-diffusion model problem
	\begin{equation}\label{semilinear_pde1}
	\begin{split}
	\partial_tu-\Delta u+ F(u)&= f \quad  \mbox{in} \; \Omega\times(0,T],\\
	u&=0 \quad  \mbox{on} \; \partial\Omega\times(0,T],\\
	u(\cdot,0)&=u_0~~~\mbox{in}\ \Omega,
	\end{split}
	\end{equation}
	where $\Omega$ is a Lipschitz polyhedral domain in 
	$\mathbb R^d $, $ d\ge 2$, with boundary $\Gamma = \partial\Omega$.
	The interpolatory approach has two main advantageous features.  First, it avoids the use of a numerical quadrature typically required for the assembly of the global matrix at each iteration in each time step, which is a computationally costly component  of  standard HDG methods for nonlinear equations. Second, the interpolated nonlinear term and its Jacobian are simple to formulate and evaluate, which yields a straightforward implementation of the method.
	
	In the first paper of this series, \cite{CockburnSinglerZhang1}, we applied this idea 
	to an HDG method defined on simplicial meshes. It is {called the} HDG$_k$ {method} since it uses polynomials of  degree $k$ to approximate all variables, that is, the flux $\bm q = -\nabla u$, the solution $u$, and its numerical trace on the faces of the elements. The interpolatory method was obtained by simply replacing the nonlinearity $F(u_h)$ by a suitably defined linear interpolate $\mathcal I_h F(u_h)$. 
	Unfortunately, the resulting method lost the superconvergence property it had in the linear case.
	In the second paper, \cite{ChenCockburnSinglerZhang1},
	we showed that, if instead of $\mathcal I_h F(u_h)$,  we use  $\mathcal I_h F(u_h^\star)$, where
	$u_h^\star$ was a elementwise postprocessing of the approximate solution, we recovered the superconvergence previously lost. In this paper, we extend this idea to 
	the new, HHO-inspired HDG methods uncovered in  \cite{MR3507267}. These methods are defined on meshes made of general polyhedral elements, use polynomials of degree $k$ to approximate the flux variable $\bm q $ and numerical trace, and use different polynomial degrees for the scalar variable $u$. We refer to them as the HDG (ABC) methods. To deal with non-simplicial elements, the stabilization function incorporates the  postprocessed  approximation $u_h^\star$, which is the distinctive feature of the HHO methods, see \cite{MR3507267}. We prove that the interpolatory technique maintains the convergence rates of the HDG (ABC) methods while retaining all the advantages of the interpolatory approach.
	
	We note that the HDG (A) method is also known as the Lehrenfeld-Sch{\"o}berl HDG method or the HDG+ method.  This HDG method has been investigated in a number of works; see, e.g., \cite{Oikawa15,Oikawa16} and the recent papers \cite{ChenCui20,DuSayas20,YuChenPiZhang20} and the references therein.
	
	We summarize the convergence rates of the Interpolatory HDG (ABC) methods in \Cref{convergencetable}. We see that, in terms of the approximation for $u$, the Interpolatory HDG (A) method converges optimally for all $k\ge0$, the Interpolatory HDG (B) superconverges for all $k\ge 0$, and the Interpolatory HDG (C) superconverges for $k\ge2$. We must emphasize that the superconvergence of the HDG (B) methods for $k=0$ is fundamentally different from the superconvergence of the HDG$_k$ methods considered in
	\cite{ChenCockburnSinglerZhang1}, where $k\ge 1$ is required for superconvergence. This reflects the essential different nature of the HDG (ABC) and the HDG$_k$ methods. It is worthwhile to mention that the convergence rate of HDG (C) method stated in \cite{MR3507267} has an error when $k=1$ since, in the linear case, its superconvergence property is only valid for $k\ge 2$.
	This property is similar to that of the BDM mixed methods \cite{BrezziDouglasMarini85} which use the same local spaces for $\bm{q}$ and $u$.}

	\begin{table}
		\centering
		\ra{2.5}
		\begin{tabular}{@{}cccccc@{}}\toprule
			Interpolatory HDG & $ W_h$ & $u_h^\star $ & $\tau$ & flux $\bm q$ & scalar $u$\\
			\bottomrule
			(A) & $\mathcal P^{k+1}$ & $u_h$ & $1/h_K$ & $k+1$ & $k+2$ $ (k\geq 0)$\\
			(B) &$\mathcal P^{k}$ & $\mathfrak{p}_h^{k+1}(u_h,\widehat u_h)$ & $1/h_K$ & $k+1$ & $k+2$ $ (k\geq 0)$\\
			(C)  & $\mathcal P^{k-1}$ & $\mathfrak{p}_h^{k+1}(u_h,\widehat u_h)$ & $1/h_K$ & $k+1$ & $k+2$ $(k\geq 2)$\\
			\bottomrule
		\end{tabular}
		\caption{\label{convergencetable}Convergence rates for the Interpolatory HDG (ABC) methods. {The elementwise postprocessing $\mathfrak{p}_h^{k+1}(u_h,\widehat u_h)$
		is taken from \cite{MR3507267}. The last column gives the orders of convergence of $u_h^\star$ to $u$.}}
	\end{table}
	
	{
	Interpolatory finite element approaches for nonlinear partial differential equations have been investigated for many decades  because of their  computational advantages. There are many different names for these methods,  including finite element methods with interpolated coefficients, product approximation, and the group finite element method.  For more information, see \cite{MR0502033,MR641309,MR798845,MR702221,MR967844,MR731213,MR2752869,MR2273051,MR2112661, MR1030644,MR1172090,MR973559,MR3178584,MR2294957,MR1068202,MR2391691,MR3403707,MR2587427} and the references therein. Our interest in applying the interpolatory approach to the HDG methods is that, after its introduction \cite{MR2485455}
	in the framework of linear steady-state diffusion problems, they have been extended in the last decade to a wide variety of partial differential equations including
	%\cite{MR2485455,MR2772094,MR2629996,MR2513831}
	nonlinear equations like those of convection-diffusion \cite{MR3463051},
	of the $p-$Laplacian \cite{MR3463051},
	of the incompressible Navier-Stokes
	flow \cite{NguyenPeraireCockburnHDGAIAAINS10,NguyenPeraireCockburnHDGNS11,MR3626531},
	of the compressible Navier-Stokes flow \cite{PeraireNguyenCockburnHDGAIAACNS10,SchutzMayCNS13},
	of fluid dynamics
	\cite{NguyenPeraireCockburnEDG15},
	of continuum solid mechanics \cite{NguyenPeraireCM12},  of scalar hyperbolic conservation laws \cite{MoroNguyenPeraireSCL12,HuertaCasoniPeraire12,CasoniPeraireHuerta13},
	of large deformation elasticity \cite{KabariaLewCockburn15,MR2558780,TerranaNguyenBonetPeraire19}; see also the reviews \cite{CockburnDurham16,CockburnEncy18}.
	The popularity of these methods stems from the fact that
	they are discontinuous Galerkin (DG) methods amenable to static condensation. Therefore, the number of globally-coupled degrees of freedom for HDG methods is significantly lower than for standard DG methods. The application of the interpolatory approach to HDG methods for nonlinear problems render the resulting methods even more efficient to implement.} 
	
	The paper is organized as follows. We discuss the  Interpolatory HDG (ABC) formulations in  \Cref{sec:HDG}. We then analyze the semidiscrete Interpolatory HDG (ABC) methods in \Cref{Error_analysis}.  Finally, we illustrate the performance of the Interpolatory HDG (ABC) methods in  \Cref{sec:numerics} with numerical experiments. {We end with some concluding remarks.}

	%\begin{table*}\centering
	%	\ra{1.3}
	%	\begin{tabular}{@{}rrrrcrrrcrrr@{}}\toprule
	%		Interpolatory HDG && $ W_h$ && $u_h^\star $ && $\tau$ && flux && potential\\
	%		\bottomrule
	%		(A) && $\mathcal P^{k+1}$ && $u_h$ && $1/h_T$ && $k+1$ && $k+2(k\geq 0)$\\
	%		%
	%		(B) &&$\mathcal P^{k}$ && $\mathfrak{p}_h^{k+1}(u_h,\widehat u_h)$ && $1/h_T$ && $k+1$ && $k+2(k\geq 0)$\\
	%		%
	%		(C)  && $\mathcal P^{k-1}$ && $\mathfrak{p}_h^{k+1}(u_h,\widehat u_h)$ && $1/h_T$ && $k+1$ && $k+2(k\geq 2)$\\
	%		\bottomrule
	%	\end{tabular}
	%	\caption{The }
	%\end{table*}

	\section{Main results}
	\label{sec:HDG}
	In this section, we introduce the notation, define the
	interpolatory HDG  (ABC) methods, and state and briefly discuss their a priori error estimates.
	
	\subsection{Notation} To describe the Interpolatory HDG (ABC) methods, we first introduce the notation used in \cite{MR2485455}. 
	
	\subsubsection{Meshes and inner products}
	Let $\mathcal{T}_h$ be a collection of disjoint elements $K$ that partition $\Omega$.  Set $\partial \mathcal{T}_h$ to be  $\{\partial K: K\in \mathcal{T}_h\}$. For an element $K$ in  $\mathcal{T}_h$, let $e = \partial K \cap \Gamma$ denote the boundary face of $ K $ if the $d-1$ Lebesgue measure of $e$ is non-zero. For two elements $K^+$ and $K^-$ of the collection $\mathcal{T}_h$, let $e = \partial K^+ \cap \partial K^-$ denote the interior face between $K^+$ and $K^-$ if the $d-1$ Lebesgue measure of $e$ is non-zero. Let $\mathcal{E}_h^o$ and $\mathcal{E}_h^{\partial}$ denote the sets of interior and boundary faces, respectively, and let $\mathcal{E}_h$ denote the union of  $\mathcal{E}_h^o$ and $\mathcal{E}_h^{\partial}$. 
	
	For  $D\subset\mathbb{R}^d$, let $(\cdot,\cdot)_D$  denote  the $L^2(D)$ inner product and, when $\Gamma$ is the union of subsets of  $\mathbb{R}^{d-1}$, let $\langle \cdot, \cdot\rangle_{\Gamma} $ denote the $L^2(\Gamma)$ inner product. We finally set
	\begin{align*}
	(w,v)_{\mathcal{T}_h} := \sum_{K\in\mathcal{T}_h} (w,v)_K,   \quad\quad\quad\quad\left\langle \zeta,\rho\right\rangle_{\partial\mathcal{T}_h} := \sum_{K\in\mathcal{T}_h} \left\langle \zeta,\rho\right\rangle_{\partial K}.
	\end{align*}

	\subsubsection{Spaces} 
	Set%
	%\begin{subequations}
	\begin{alignat*}{4}
	\bm{V}_h  &:= \{\bm{v}\in [L^2(\Omega)]^d:&&\; \bm{v}|_{K}\in [\mathcal{P}^k(K)]^d, &&\;\forall \; K\in \mathcal{T}_h\},\\
	{W}_h  &:= \{{w}\in L^2(\Omega):&&\; {w}|_{K}\in \mathcal{P}^{\ell }(K),&&\; \forall \; K\in \mathcal{T}_h\},\\
	{Z}_h  &:= \{{z}\in L^2(\Omega):&&\; {z}|_{K}\in \mathcal{P}^{k+1}(K),&&\; \forall \; K\in \mathcal{T}_h\},\\
	{M}_h  &:= \{{\mu}\in L^2(\mathcal{\mathcal{E}}_h):&&\; {\mu}|_{e}\in \mathcal{P}^k(e), &&\;\forall \; e\in \mathcal{E}_h, &&\;\mu|_{\mathcal{E}_h^\partial} = 0\},
	\end{alignat*}
	%\end{subequations}
	where $\mathcal{P}^k(D)$ denotes the set of polynomials of degree at most $k$ on a domain $D$. In what follows, we take $\ell=k+1$, $k$, and $k-1$ to define the Interpolatory HDG (A), (B), and (C) methods,  respectively. Note that the Interpolatory HDG (C) method is only defined for $k\ge 1$.
	
	\subsubsection{Interpolators and projections}
As in \cite{ChenCockburnSinglerZhang1}, we denote by  $ \mathcal I_h $ the element-wise Lagrange  interpolation operator with respect to the finite element nodes for the space $ Z_h $.  Thus, $ \mathcal{I}_h g \in Z_h $ for any function $ g $ that is continuous on each element.

     We denote by $ \Pi_\ell^o $ ($ \ell \geq 0 $) and $ \Pi_k^\partial $ ($k\ge0$) the particular $L^2$-orthogonal projections $\Pi_{\ell}^o: L^2(K)\to \mathcal P^{\ell}(K)$ and $\Pi_k^\partial : L^2(e)\to \mathcal P^{k}(e)$, respectively, that is, 
	\begin{subequations}
		\begin{alignat}{2}
		(\Pi_{\ell}^o u, v_h)_K &= (u,v_h)_K,\quad &&\;\forall \; v_h\in \mathcal P^{\ell}(K),\label{L2_do}\\
		\langle \Pi_k^\partial  u, \widehat v_h \rangle_e &= \langle u, \widehat v_h \rangle_e,\quad &&\;\forall \;  \widehat v_h\in \mathcal P^{k}(e).\label{L2_edge}
		\end{alignat}
	\end{subequations}
	We now define an auxiliary projection related to the 
	postprocessings originally developed in \cite{GastaldiNochetto89,Stenberg88,Stenberg91,CockburnGopalakrishnanSayas10} but more intimately linked with that of the HHO methods
	\cite{DiPietroErnCRAS15,DiPietroErnLemaire14,DiPietroErn15}, see also \cite{MR3507267}.
	On an element $K\in \mathcal{T}_h$, we define the auxiliary projection  $\Pi_{k+1}^\star$ as
	\begin{align}\label{pixin}
	\Pi_{k+1}^\star u=\mathfrak{p}_h^{k+1}(\Pi^{o}_{\ell} u,\Pi^{\partial}_k u),
	\end{align}
	where $\mathfrak{p}_h^{k+1}(u_h,\widehat{u}_h)$ is the element of $\mathcal P^{k+1}(K)$ satisfying
	\begin{subequations}\label{post}
		\begin{alignat}{2}
		(\nabla\mathfrak{p}_h^{k+1}(u_h,\widehat{u}_h),\nabla z_h)_K&=-(u_h,\Delta z_h)_K+\langle\widehat{u}_h,\bm n\cdot\nabla z_h \rangle_{\partial K}&&\quad\forall \; z_h \in [\mathcal P^{k+1}_\ell(K)]^{\perp},\label{pp1}\\
		(\mathfrak{p}_h^{k+1}(u_h,\widehat{u}_h),w_h)_K&=(u_h,w_h)_K&&\quad\forall \; w_h\in \mathcal{P}^{\ell}(K),\label{pp2}
		\end{alignat}
	\end{subequations}
	 where   
%	\begin{align*}
	$[\mathcal P^{k+1}_\ell(K)]^{\perp}:=\{v_h\in \mathcal P^{k+1}(K) : (v_h,w_h)_K=0,\forall \; w_h\in\mathcal P^{\ell}(K)\}.$

	\subsection{The Interpolatory  HDG (ABC) methods}
	We can now define the Interpolatory HDG (ABC) methods as follows:
	for all $(\bm r_h,v_h,\widehat{v}_h)\in \bm V_h\times W_h\times M_h$, find
	$(\bm q_h,u_h,\widehat{u}_h)\in \bm V_h\times W_h\times M_h$ such that
	\begin{subequations}
	\label{HDG-O}
		\begin{align}
		(\bm{q}_h,\bm{r}_h)_{\mathcal{T}_h}-(u_h,\nabla\cdot \bm{r}_h)_{\mathcal{T}_h}+\left\langle\widehat{u}_h,\bm r_h\cdot\bm n \right\rangle_{\partial{\mathcal{T}_h}} &= 0, \label{HDG-O_a}\\
		(\partial_t u_h, v_h)_{\mathcal T_h}-(\bm{q}_h,\nabla v_h)_{\mathcal{T}_h}+\left\langle\widehat{\bm{q}}_h\cdot \bm{n},v_h\right\rangle_{\partial{\mathcal{T}_h}} +  ( \mathcal I_h F(u_h^\star),v_h)_{\mathcal{T}_h}&= (f,v_h)_{\mathcal{T}_h},\label{HDG-O_b}\\
		\left\langle\widehat{\bm{q}}_h\cdot \bm{n}, \widehat{v}_h\right\rangle_{\partial{\mathcal{T}_h}\backslash\mathcal{E}^{\partial}_h} &=0,\label{HDG-O_c}\\
		u_h(0)&= \overline{u}_h(0),
		\end{align}
	\end{subequations}
	where $u_h^\star  := \mathfrak{p}_h^{k+1}(u_h,\widehat{u}_h)$.
	To complete the definition of the methods, we need to define the numerical trace for the flux, $\widehat{\bm q}_h$, and the initial condition
	$\overline{u}_h(0)$.
%	\end{align*}
	%{\color{red}We note that any initial condition can be used for the algorithm, but we consider the specific choice $ \overline u_h(0)$ to simplify the analysis. How about this: We note that the $L^2$ projection of the initial condition $u_0$ can be used for the algorithm, but we consider the specific choice $ \overline u_h(0)$ to simplify the analysis.}
		For any element $K\in \mathcal{T}_h$, we define $\widehat{\bm q}_h\cdot\bm n$
	on $\partial K$ by
	\begin{subequations}
	\begin{align}\label{num_tra}
	\widehat{\bm q}_h\cdot\bm n &= \bm q_h\cdot\bm n+r_{\partial K}^{k*}[h_{K}^{-1} r_{\partial K}^{k}( u_h -\widehat u_h)],
	\end{align}
	where $r_{\partial K}^{k*}$ is the adjoint of $r_{\partial K}^{k}$, and $r_{\partial K}^{k}$ is
	defined, see \cite{MR3507267}, by
	\begin{eqnarray}
	r_{\partial K}^{k}(u_h-\widehat{u}_h)=\Pi^{\partial}_ku_h^\star-\widehat{u}_h. \label{rk}
	\end{eqnarray}
	\end{subequations}
	Finally, we define the initial condition $\overline{u}_h(0)$ as one of the components of the HDG (ABC) elliptic approximation. For any $t\in [0,T]$, we
	define the HDG (ABC) elliptic approximation of
	$(-\nabla u(t)|_{\mathcal{T}_h}, u(t)|_{\mathcal{T}_h}, u(t)|_{\mathcal{E}_h})$
	to be the unique element $(\overline{\bm q}_h,\overline{u}_h,\widehat{\overline u}_h)$ of $ \bm V_h\times W_h\times M_h$ 
	which solves
	\begin{subequations}\label{elliptic-projection}
		\begin{align}
		(\overline{\bm{q}}_h,\bm{r}_h)_{\mathcal{T}_h}-(\overline{u}_h,\nabla\cdot \bm{r}_h)_{\mathcal{T}_h}+\langle\widehat{\overline{u}}_h,\bm r_h\cdot\bm n \rangle_{\partial{\mathcal{T}_h}\setminus\partial \Omega} &= 0, \label{pi1}\\
		(\nabla\cdot\overline{\bm{q}}_h, v_h)_{\mathcal{T}_h}
		-\langle \overline{\bm q}_h\cdot\bm n,\widehat{v}_h \rangle_{\partial{\mathcal{T}_h}}+\langle
		h^{-1}_K( \Pi^{\partial}_k \overline{u}_h^\star -\widehat{\overline{u}}_h),\Pi^{\partial}_kv_h^\star-\widehat{v}_h\rangle_{\partial{\mathcal{T}_h}} &= (-\Delta u(t), v_h)_{\mathcal{T}_h}\label{pi2}
		\end{align}
	\end{subequations}
	for all $(\bm r_h,v_h,\widehat{v}_h)\in \bm V_h\times W_h\times M_h$,
	where  $ \overline{u}_h^\star  = \mathfrak{p}_h^{k+1}(\overline{u}_h,\widehat{\overline{u}}_h)$ and $
	v_h^\star=\mathfrak{p}_h^{k+1}( v_h,\widehat{v}_h)$.
	
	%The operator 
	%where $ u_h^\star  = \mathfrak{p}_h^{k+1}(u_h,\widehat{u}_h)$ is the postprocess of $u_h$ and $\widehat u_h$,  find $ \mathfrak{p}_h^{k+1}(u_h,\widehat{u}_h)\in \mathcal P^{k+1}(K)$ satisfy

	\subsection{Main result}	
	We assume that the nonlinearity $F$ satisfies a Lipschitz condition:
	\begin{align}
	|F(u) - F(v)|\le L\, |u-v|
	\quad\forall\; u,v\in D.
	\label{g_F}
	\end{align}
	As done in \cite{ChenCockburnSinglerZhang1}, we assume that, when $F$ is globally Lischitz in a suitably chosen domain $D$, the solutions of the model problem \eqref{semilinear_pde1}, and  those of the semidiscrete Interpolatory HDG (ABC) equations \eqref{HDG-O}, exist and are unique for $ t \in [0,T] $.
	
	We also assume the elliptic regularity inequality
	\begin{subequations}
	\begin{align}
	\|\bm \Phi\|_{1} + \|\Psi\|_{2} \le C   \| g\|_{0},
	\label{regular}
	\end{align}
	 where $(\bm\Phi,\Psi)$ solves the dual problem:
	\begin{equation}\label{Dual_PDE}
	\bm{\Phi}+\nabla\Psi=0,
	\quad
	\nabla\cdot\bm \Phi =g\quad~\text{in}\ \Omega,
	\quad
	\Psi = 0\qquad\text{on}\ \partial\Omega.
		\end{equation}
	\end{subequations}
	%  \varepsilon_h^{u^\star}  !!!!
	
	We can now state our main result for the Interpolatory HDG (ABC) methods. 
	\begin{theorem}\label{main_err_qu} 
		Assume that the nonlinearity $F$ is globally Lipschitz, that is, it satisfies condition \eqref{g_F} with $D:=\mathbb{R}$. Assume that
		$u \in C^1[0,T;H^{k+2}(\Omega)]$ . {Finally, assume that the elliptic regularity inequality \eqref{regular} holds.} Then, for all $0\le t\le T$, the solution $ (\bm q_h, u_h,u_h^\star) $ of the Interpolatory HDG (ABC) equations satisfies
		\begin{alignat*}{1}
		\|\bm q(t) - \bm q_h(t)\|_{\mathcal T_h}&\le C\, h^{k+1}, %{\|u\|_{L^\infty(0,T;H^{k+2}(\Omega))}},
		\\
		\|u(t) - u_h(t)\|_{\mathcal T_h}
		&\le C h^{\ell+1},
		%{\|u\|_{L^\infty(0,T;H^{k+2}(\Omega))}},
		\\
		\|u(t) - u_h^\star(t)\|_{\mathcal T_h}
		&\le
		C\;\begin{cases}
		 h^{2\phantom{+k}}
		 %{\,\|u\|_{L^\infty(0,T;H^{3}(\Omega))}} 
		 &\text{ if }(k,l)=(1,0), \\
		 h^{k+2}
		 %{\|u\|_{L^\infty(0,T;H^{k+2}(\Omega))}}
		 &\text{ otherwise}.
		\end{cases}
		\end{alignat*}
{The constant $C$ is independent of $h$, but depends on $T$ and on norms of $u$ and $u_t$.}
		Moreover, if the nonlinearity $F$ satisfies the Lipschitz condition \eqref{g_F} with $D:=[-M,M]$, where
		\begin{align*}
	M = \max\{|u(t,x)| : x\in \overline{\Omega}, \: t\in [0,T]\} + \delta, \quad  \mbox{for a fixed $ \delta>0 $,}
	\end{align*} and the mesh is quasi-uniform and $h$ is small enough, then the same convergence rates hold.
	\end{theorem}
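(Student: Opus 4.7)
The plan is to follow the standard ``elliptic projection + Gronwall'' template, adapted to the HHO-inspired stabilization and to the interpolatory nonlinearity. I would first split each error through the HDG (ABC) elliptic projection defined by \eqref{elliptic-projection}, writing
\begin{align*}
\bm q - \bm q_h &= (\bm q - \overline{\bm q}_h) + (\overline{\bm q}_h - \bm q_h) =: \bm\delta^{\bm q} + \bm\varepsilon_h^{\bm q},\\
u - u_h &= (u - \overline{u}_h) + (\overline{u}_h - u_h) =: \delta^u + \varepsilon_h^u,
\end{align*}
and analogously on the skeleton. The $\bm\delta$ pieces are controlled directly by the a priori estimates for the HDG (ABC) elliptic projection taken from \cite{MR3507267}; in particular $\|\bm\delta^{\bm q}\|_{\mathcal T_h}\lesssim h^{k+1}$, $\|\delta^u\|_{\mathcal T_h}\lesssim h^{\ell+1}$, and $\|u-\overline u_h^\star\|_{\mathcal T_h}$ obeys the bound announced in the theorem (with the $(k,\ell)=(1,0)$ anomaly inherited from the postprocessing analysis there). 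What remains is to estimate the discrete part $(\bm\varepsilon_h^{\bm q},\varepsilon_h^u,\widehat\varepsilon_h^u)$.

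Next I would derive an error equation by subtracting \eqref{HDG-O} from \eqref{elliptic-projection} and rearranging so that the left-hand side is exactly the bilinear form of the interpolatory HDG (ABC) method applied to $(\bm\varepsilon_h^{\bm q},\varepsilon_h^u,\widehat\varepsilon_h^u)$, while the right-hand side collects three residual sources: $(\partial_t(\overline u_h-u),v_h)_{\mathcal T_h}$ from the time derivative, a consistency term for $-\Delta u$ versus the mixed formulation, and the nonlinear residual $(\mathcal I_h F(u_h^\star)-F(u),v_h)_{\mathcal T_h}$. Testing with $(\bm r_h,v_h,\widehat v_h)=(\bm\varepsilon_h^{\bm q},\varepsilon_h^u,\widehat\varepsilon_h^u)$ makes the convective-type terms cancel and leaves
\begin{equation*}
\tfrac12\tfrac{d}{dt}\|\varepsilon_h^u\|_{\mathcal T_h}^2 + \|\bm\varepsilon_h^{\bm q}\|_{\mathcal T_h}^2 + \big\|h_K^{-1/2}(\Pi^\partial_k\varepsilon_h^{u,\star}-\widehat\varepsilon_h^u)\big\|_{\partial\mathcal T_h}^2 \;\le\; \text{(RHS)},
\end{equation*}
where $\varepsilon_h^{u,\star}:=\mathfrak p_h^{k+1}(\varepsilon_h^u,\widehat\varepsilon_h^u)$. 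The nonlinear residual is split as
\begin{equation*}
\mathcal I_h F(u_h^\star)-F(u) = \mathcal I_h\big(F(u_h^\star)-F(u)\big) + \big(\mathcal I_h F(u)-F(u)\big),
\end{equation*}
and, using the Lipschitz hypothesis \eqref{g_F} together with the $L^2$-stability of $\mathcal I_h$ on discrete polynomials, the first piece is bounded by $L\|u_h^\star-u\|_{\mathcal T_h}$, which in turn is controlled by $\|\delta^u\|_{\mathcal T_h}+\|u-\overline u_h^\star\|_{\mathcal T_h}+\|\varepsilon_h^{u,\star}\|_{\mathcal T_h}$ and then by the elliptic projection error plus $\|\varepsilon_h^u\|_{\mathcal T_h}$ via the standard postprocessing stability estimate. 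The interpolation residual $\mathcal I_h F(u)-F(u)$ is of order $h^{k+2}$ for smooth $u$. Gronwall then yields $\|\varepsilon_h^u\|_{\mathcal T_h}\lesssim h^{\ell+1}$ and $\|\bm\varepsilon_h^{\bm q}\|_{\mathcal T_h}\lesssim h^{k+1}$, giving the first two estimates in the theorem.

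To upgrade the $u$-estimate to the superconvergence rate stated for $u_h^\star$, I would run a duality argument against the dual problem \eqref{Dual_PDE}, mimicking the Aubin--Nitsche style already used for the HDG (ABC) methods in the linear case. One takes $g=u-u_h^\star$ (or $g=\Pi^o_\ell u-u_h$, depending on which norm is being bounded), writes the HDG (ABC) elliptic projection of $(\bm\Phi,\Psi)$, and pairs its error equation with the nonlinear error equation already established; the elliptic regularity \eqref{regular} together with the projection errors for $(\bm\Phi,\Psi)$ then produces an extra factor of $h$, leading to the $h^{k+2}$ (respectively $h^2$ in the anomalous $(k,\ell)=(1,0)$ case) bound. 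The locally-Lipschitz case is reduced to the globally Lipschitz one by a bootstrap: a discrete inverse inequality on the quasi-uniform mesh combined with the just-proved $L^2$ bound shows that, for $h$ small enough, $u_h^\star$ stays within $[-M,M]$, so the Lipschitz constant $L$ can be used throughout.

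The step I expect to be the main obstacle is controlling the nonlinear residual when tested against $\varepsilon_h^u$: the natural Lipschitz bound produces $\|u_h^\star-u\|_{\mathcal T_h}$, which involves the postprocessed discrete error $\varepsilon_h^{u,\star}=\mathfrak p_h^{k+1}(\varepsilon_h^u,\widehat\varepsilon_h^u)$ rather than $\varepsilon_h^u$ itself. Closing the Gronwall loop therefore requires an inequality of the form $\|\varepsilon_h^{u,\star}\|_{\mathcal T_h}\lesssim \|\varepsilon_h^u\|_{\mathcal T_h}+\|h_K^{-1/2}(\Pi^\partial_k\varepsilon_h^{u,\star}-\widehat\varepsilon_h^u)\|_{\partial\mathcal T_h}+\|\bm\varepsilon_h^{\bm q}\|_{\mathcal T_h}$, which is exactly the HHO-style stability of the postprocessing on a general polyhedral element; writing and quantifying this inequality carefully (so that the $k=0$ case still closes and the $(k,\ell)=(1,0)$ anomaly in the $u_h^\star$-rate appears naturally) is where the bulk of the technical work sits.
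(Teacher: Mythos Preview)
Your overall template (split through the HDG elliptic projection, derive error equations, energy/Gronwall, bootstrap for the locally Lipschitz case) matches the paper. The difference lies in \emph{where} the duality argument is used and what rate Gronwall actually delivers. In the paper, the Aubin--Nitsche argument is carried out \emph{only} for the elliptic projection (their Theorem~\ref{error_dual2}), and it yields the superconvergent estimate $\|\Pi_\ell^o u-\overline u_h\|_{\mathcal T_h}+\|\partial_t(\Pi_\ell^o u-\overline u_h)\|_{\mathcal T_h}\lesssim h^{k+2}$ (with the $(k,\ell)=(1,0)$ anomaly already built in). Because the time-derivative source in the error equation is exactly $(\partial_t(\Pi_\ell^o u-\overline u_h),v_h)_{\mathcal T_h}$, and the nonlinear residual is bounded (via their Lemma~\ref{super_con} and Lemma~\ref{non_est}) by $\|e_h^u\|+h\|e_h^{\bm q}\|+O(h^{k+2})$, the Gronwall loop closes at the rate $\|e_h^u\|_{\mathcal T_h}\lesssim h^{k+2}$ directly---not merely $h^{\ell+1}$. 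The superconvergence for $u_h^\star$ then follows from a triangle inequality and Lemma~\ref{super_con}, with no second duality step on the nonlinear problem. Your plan to run a separate duality against \eqref{Dual_PDE} with $g=u-u_h^\star$ would have to absorb the nonlinear residual inside the pairing, reintroducing $\|u_h^\star-u\|$ and a circularity that the paper avoids entirely.

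Two smaller points. First, the ``main obstacle'' you flag is precisely the paper's Lemma~\ref{super_con}: it shows $\|\Pi_{k+1}^\star u-u_h^\star\|_{\mathcal T_h}\le C(\|u_h-\Pi_\ell^o u\|_{\mathcal T_h}+h\|\bm q_h-\bm\Pi_k^o\bm q\|_{\mathcal T_h}+h\|\bm q-\bm\Pi_k^o\bm q\|_{\mathcal T_h})$, proved by testing the first HDG equation with $\bm r_h=\nabla z_h$ for $z_h\in[\mathcal P_\ell^{k+1}(K)]^\perp$; this is how the loop closes on $\|e_h^u\|$ rather than on $\|\varepsilon_h^{u,\star}\|$. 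Second, your nonlinear splitting $\mathcal I_h(F(u_h^\star)-F(u))$ cannot be bounded by ``$L^2$-stability of $\mathcal I_h$ on discrete polynomials'' since $F(u_h^\star)-F(u)$ is not polynomial; the paper instead inserts $\Pi_{k+1}^\star u$ and treats $\mathcal I_hF(\Pi_{k+1}^\star u)-\mathcal I_hF(u_h^\star)$ (both arguments in $Z_h$, so nodal norm equivalence applies) and $\mathcal I_hF(u)-\mathcal I_hF(\Pi_{k+1}^\star u)$ separately, citing \cite{CockburnSinglerZhang1} for the latter.
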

This result states that, provided the solution is smooth enough, we recover the optimal orders of convergence. For HDG (A) with $k\ge0$, the optimal order of convergence of $k+2$ holds for $u^\star_h$, as it coincides with $u_h$.
Superconvergence of order $k+2$ for $u_h^\star$ holds for HDG (B) with $k\ge0$, and for HDG (C) with $k\ge2$. When $k=1$, the order of convergence of
$u_h^\star$ for HDG (C) is only $k+1=2$.

The result can be extended to other initial conditions{, as confirmed by our numerical experiments}. The one we chose makes the proof simpler. 
	
	\section{Proof of the error estimates}
	\label{Error_analysis}
	This section is devoted to proving our main result, the a priori error estimates of  \Cref{main_err_qu}.
	To do that, we essentially follow the approach carried out in \cite{ChenCockburnSinglerZhang1}. However, we need to use different auxiliary projections to capture the special structure of the stabilization functions of the HDG (ABC) methods.

	\subsection{Reformulating the HDG (ABC) methods}
	\label{reformulation}
	We begin by rewriting the definition of the
	Interpolatory HDG (ABC) methods to render it more suitable to our error analysis. 
	Unlike the approach used in  \cite{ChenCockburnSinglerZhang1}, here we eliminate the numerical trace of the flux from the equations.
	\begin{proposition}[Reformulation of the methods]
	\label{lemma1:HHO_proj}
		For all $(\bm r_h,v_h,\widehat{v}_h)\in \bm V_h\times W_h\times M_h$, the Interpolatory HDG (ABC) formulations can be rewritten as follows: find $(\bm q_h,u_h,\widehat{u}_h)\in \bm V_h\times W_h\times M_h$ satisfying
		%\begin{subequations}\label{HDGABC}
			\begin{align*}
			(\bm{q}_h,\bm{r}_h)_{\mathcal{T}_h}-(u_h,\nabla\cdot \bm{r}_h)_{\mathcal{T}_h}+\left\langle\widehat{u}_h,\bm r_h\cdot\bm n \right\rangle_{\partial{\mathcal{T}_h}} &= 0, %\label{HDGABC_a}
			\\
			(\partial_t u_h,v_h)_{\mathcal T_h}+( \mathcal I_h F(u_h^\star),v_h)_{\mathcal{T}_h}+		(\nabla\cdot\bm{q}_h, v_h)_{\mathcal{T}_h}
			-\langle \bm q_h\cdot\bm n,\widehat{v}_h \rangle_{\partial{\mathcal{T}_h}} \quad&
			\nonumber\\+\langle
			h^{-1}_K( \Pi^{\partial}_k u_h^\star -\widehat u_h),\Pi^{\partial}_kv_h^\star-\widehat{v}_h\rangle_{\partial{\mathcal{T}_h}} &= (f,v_h)_{\mathcal{T}_h},%\label{HDGABC_b}
			\\
			u_h(0)&= \overline{u}_h(0),
			\end{align*}
		%\end{subequations}
		where
		$
		u_h^\star=\mathfrak{p}_h^{k+1}( u_h,\widehat{u}_h)$ and $
		v_h^\star=\mathfrak{p}_h^{k+1}( v_h,\widehat{v}_h)$.
	\end{proposition}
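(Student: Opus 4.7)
The plan is to derive the reformulation by eliminating the numerical flux trace $\widehat{\bm q}_h$ from \eqref{HDG-O} using element-wise integration by parts on \eqref{HDG-O_b}, the conservativity equation \eqref{HDG-O_c}, and the adjoint structure built into the definition \eqref{num_tra} of $\widehat{\bm q}_h\cdot\bm n$. Equation \eqref{HDG-O_a} and the initial condition already match the reformulation verbatim, so all the work concerns merging \eqref{HDG-O_b} and \eqref{HDG-O_c} into a single equation tested against both $v_h$ and $\widehat v_h$.

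First I integrate by parts element-wise, using $-(\bm q_h,\nabla v_h)_{\mathcal T_h}=(\nabla\cdot\bm q_h,v_h)_{\mathcal T_h}-\langle\bm q_h\cdot\bm n,v_h\rangle_{\partial\mathcal T_h}$, so that \eqref{HDG-O_b} takes the form
\[
(\partial_t u_h,v_h)_{\mathcal T_h}+(\nabla\cdot\bm q_h,v_h)_{\mathcal T_h}+\langle(\widehat{\bm q}_h-\bm q_h)\cdot\bm n,v_h\rangle_{\partial\mathcal T_h}+(\mathcal I_h F(u_h^\star),v_h)_{\mathcal T_h}=(f,v_h)_{\mathcal T_h}.
\]
Next, since $\widehat v_h\in M_h$ vanishes on $\mathcal E_h^\partial$, equation \eqref{HDG-O_c} gives $\langle\widehat{\bm q}_h\cdot\bm n,\widehat v_h\rangle_{\partial\mathcal T_h}=0$; subtracting this null quantity and regrouping transforms the boundary contribution into
\[
\langle(\widehat{\bm q}_h-\bm q_h)\cdot\bm n,v_h-\widehat v_h\rangle_{\partial\mathcal T_h}-\langle\bm q_h\cdot\bm n,\widehat v_h\rangle_{\partial\mathcal T_h},
\]
which already exhibits the $-\langle\bm q_h\cdot\bm n,\widehat v_h\rangle_{\partial\mathcal T_h}$ term of the target formulation.

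Finally, I invoke \eqref{num_tra}, which gives $(\widehat{\bm q}_h-\bm q_h)\cdot\bm n=r_{\partial K}^{k*}[h_K^{-1}r_{\partial K}^{k}(u_h-\widehat u_h)]$, together with the duality $\langle r_{\partial K}^{k*}(\mu),v_h-\widehat v_h\rangle_{\partial K}=\langle\mu,r_{\partial K}^{k}(v_h-\widehat v_h)\rangle_{\partial K}$ that defines the adjoint of $r_{\partial K}^{k}$. Taking $\mu=h_K^{-1}r_{\partial K}^{k}(u_h-\widehat u_h)$ and applying \eqref{rk} on both factors converts the first residual above into the HHO-style stabilization $\langle h_K^{-1}(\Pi_k^\partial u_h^\star-\widehat u_h),\Pi_k^\partial v_h^\star-\widehat v_h\rangle_{\partial\mathcal T_h}$, which is precisely the remaining term of the proposition. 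No step is genuinely deep; the only mild obstacle is the bookkeeping required to interpret $r_{\partial K}^{k*}$ correctly as the formal $L^2(\partial K)$-adjoint of the linear map $r_{\partial K}^{k}$ acting on pairs $(w_h,\widehat w_h)$ through \eqref{rk}.
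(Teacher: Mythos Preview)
Your proposal is correct and follows essentially the same approach as the paper: both combine \eqref{HDG-O_b} and \eqref{HDG-O_c}, integrate by parts element-wise, and then use the adjoint relation for $r_{\partial K}^{k}$ together with \eqref{rk} to obtain the HHO-type stabilization term. The only cosmetic difference is the order of operations---the paper substitutes the numerical flux first and integrates by parts last, whereas you integrate by parts first and substitute last---but the logical content is identical.
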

	\begin{proof}
		Inserting the definition of the numerical trace of the flux \eqref{num_tra} into the first two equations defining the HDG  (ABC) method \eqref{HDG-O}, we obtain
		%\begin{subequations}
			\begin{align*}
			(\partial_t u_h,v_h)_{\mathcal T_h}-(\bm{q}_h,\nabla v_h)_{\mathcal{T}_h} +  ( \mathcal I_h F(u_h^\star),v_h)_{\mathcal{T}_h} \quad&\nonumber\\
			+\langle\bm q_h\cdot\bm n+r_{\partial K}^{k*}[h_{K}^{-1}( \Pi^{\partial}_k u_h^\star -\widehat u_h)],v_h\rangle_{\partial{\mathcal{T}_h}}&= (f,v_h)_{\mathcal{T}_h},%\label{p1}
			\\
			\langle\bm q_h\cdot\bm n+r_{\partial K}^{k*}[h_{K}^{-1}( \Pi^{\partial}_k u_h^\star -\widehat u_h)], \widehat{v}_h\rangle_{\partial{\mathcal{T}_h}} &=0.%\label{p2}
			\end{align*}
		%\end{subequations}
		Subtracting the second equation from the first, and integrating by parts, we get
		\begin{align*}
		(\partial_t u_h,v_h)_{\mathcal T_h}+( \mathcal I_h F(u_h^{\star}),v_h)_{\mathcal{T}_h}+(\nabla\cdot\bm{q}_h,v_h)_{\mathcal{T}_h}-\left\langle\bm q_h\cdot\bm n, \widehat{v}_h\right\rangle_{\partial{\mathcal{T}_h}} \quad &
		\nonumber\\+\langle r_{\partial K}^{k*}[h_{K}^{-1}( \Pi^{\partial}_k u_h^\star -\widehat u_h)],v_h-\widehat{v}_h\rangle_{\partial{\mathcal{T}_h}} &= (f,v_h)_{\mathcal{T}_h}.
		\end{align*}
		Since $r_{\partial K}^{k*}$ is the adjoint of $r_{\partial K}^{k}$, the result follows after using the
		definition of $r_{\partial K}^{k}$ in \eqref{rk}, $
		r_{\partial K}^{k}(v_h-\widehat v_h)= \Pi^{\partial}_k\mathfrak{p}_h^{k+1}( v_h,\widehat{v}_h)-\widehat{v}_h$,
		and after recalling that $v_h^\star=\mathfrak{p}_h^{k+1}( v_h,\widehat{v}_h)$.
		%, we finally get that
		%\begin{align*}
		%(\partial_t u_h,v_h)_{\mathcal T_h}+( \mathcal I_h F(u_h^\star),v_h)_{\mathcal{T}_h}+(\nabla\cdot\bm{q}_h,v_h)_{\mathcal{T}_h}-\left\langle\bm q_h\cdot\bm n, \widehat{v}_h\right\rangle_{\partial{\mathcal{T}_h}} \quad&
		%\nonumber\\+\langle h_{K}^{-1}( \Pi^{\partial}_k u_h^\star -\widehat u_h),\Pi^{\partial}_k v^\star_h-\widehat{v}_h\rangle_{\partial{\mathcal{T}_h}} &= (f,v_h)_{\mathcal{T}_h}.
		%\end{align*}
		%This completes the proof.
	 \end{proof}
	
	\subsection{Main error estimate}% for different types of nonlinearity}
	\label{Errorestimationfordifferenttypesnonlinearity}
	
	Our analysis is based on estimating the following quantities:
	\begin{align*}
	e_h^{\bm q}=\bm q_h -\overline{\bm q}_h, \quad  e_h^{ u}= u_h - \overline{u}_h, \quad
	e_h^{ \widehat{u}}=\widehat{u}_h - {\widehat{\overline u}}_h,\quad  e_h^{u^{\star}}=u_h^{\star}-\overline{u}_h^{\star}.
	\end{align*}
	Here, we obtain the main estimates for these functions.
	
	We begin by obtaining the error equations.
	\begin{lemma} [Error equations] \label{error}
	We have
	%	\begin{subequations}
			\begin{align*}
			(e_h^{\bm q},\bm{r}_h)_{\mathcal{T}_h}-(e^u_h,\nabla\cdot \bm{r}_h)_{\mathcal{T}_h}+\langle e^{\widehat{u}}_h,\bm r_h\cdot\bm n \rangle_{\partial{\mathcal{T}_h}} &= 0, %\label{error1}
			\\
			(\partial_t e^u_h,v_h)_{\mathcal T_h}+		(\nabla\cdot e^{\bm{q}}_h, v_h)_{\mathcal{T}_h}
			-\langle e^{\bm q}_h\cdot\bm n,\widehat{v}_h \rangle_{\partial{\mathcal{T}_h}} \quad&
			\nonumber\\+\langle
			h^{-1}_K( \Pi^{\partial}_k e^{u^\star}_h -e^{\widehat u}_h),\Pi^{\partial}_kv_h^\star-\widehat{v}_h\rangle_{\partial{\mathcal{T}_h}} +(\mathcal I_h F(u_h^\star) - F(u),v_h)_{\mathcal T_h} &= (\partial_t(\Pi^o_{\ell}u-\overline{u}_h),v_h)_{\mathcal{T}_h}.%\label{error2}
			\end{align*}
	%	\end{subequations}
	\end{lemma}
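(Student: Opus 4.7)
The plan is to derive both error equations by subtracting the HDG (ABC) elliptic approximation equations \eqref{elliptic-projection} from the reformulated Interpolatory HDG (ABC) equations of \Cref{lemma1:HHO_proj}, and then to invoke the PDE \eqref{semilinear_pde1} to eliminate the source $f$.

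For the first error equation, I would simply subtract \eqref{pi1} from the first equation of \Cref{lemma1:HHO_proj}. The only minor point is that the trace pairing in \eqref{pi1} is taken over $\partial\mathcal{T}_h\setminus\partial\Omega$, while the one in \Cref{lemma1:HHO_proj} is taken over $\partial\mathcal{T}_h$; since $\widehat u_h,\widehat{\overline u}_h\in M_h$ both vanish on $\partial\Omega$, the two pairings agree. The subtraction then yields the stated identity in $e_h^{\bm q}$, $e_h^u$, $e_h^{\widehat u}$ directly.

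For the second error equation, I would subtract \eqref{pi2} from the corresponding equation of \Cref{lemma1:HHO_proj}. The flux and stabilization terms collapse linearly into $(\nabla\cdot e_h^{\bm q},v_h)_{\mathcal T_h}-\langle e_h^{\bm q}\cdot\bm n,\widehat v_h\rangle_{\partial\mathcal T_h}$ and $\langle h_K^{-1}(\Pi^\partial_k e_h^{u^\star}-e_h^{\widehat u}),\Pi^\partial_k v_h^\star-\widehat v_h\rangle_{\partial\mathcal T_h}$, while the right-hand side becomes $(f+\Delta u,v_h)_{\mathcal T_h}=(\partial_t u+F(u),v_h)_{\mathcal T_h}$ by \eqref{semilinear_pde1}. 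Moving $(F(u),v_h)_{\mathcal T_h}$ to the left produces the expected nonlinear residual $(\mathcal I_h F(u_h^\star)-F(u),v_h)_{\mathcal T_h}$.

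The last step, where some care is needed, is to rewrite the raw time-derivative term $(\partial_t u_h-\partial_t u,v_h)_{\mathcal T_h}$ in the form $(\partial_t e_h^u,v_h)_{\mathcal T_h}-(\partial_t(\Pi^o_\ell u-\overline u_h),v_h)_{\mathcal T_h}$. Since $v_h\in W_h=\mathcal P^\ell$ on each element, the $L^2$-orthogonality \eqref{L2_do} combined with $\partial_t\Pi^o_\ell=\Pi^o_\ell\partial_t$ yields $(\partial_t u,v_h)_{\mathcal T_h}=(\partial_t\Pi^o_\ell u,v_h)_{\mathcal T_h}$, after which the decomposition $u_h-\Pi^o_\ell u = e_h^u-(\Pi^o_\ell u-\overline u_h)$ splits the time derivative exactly as required. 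No essential obstacle arises; the only thing to keep straight is that it is $\Pi^o_\ell u$, rather than the elliptic projection $\overline u_h$, that appears on the right-hand side, which is forced by testing against $v_h\in W_h$ and will be important for bounding the data term later.
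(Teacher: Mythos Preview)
Your proposal is correct and follows essentially the same route as the paper: subtract the elliptic-projection equations \eqref{elliptic-projection} from the reformulated scheme in \Cref{lemma1:HHO_proj}, then use the PDE and the $L^2$-orthogonality of $\Pi^o_\ell$ against $v_h\in W_h$ to rewrite the time-derivative term. The paper's own proof is a single sentence to this effect; the only point it makes explicit that you leave implicit is the linearity identity $e_h^{u^\star}=\mathfrak{p}_h^{k+1}(e_h^u,e_h^{\widehat u})$, which you absorb into ``the stabilization terms collapse linearly.''
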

	This result can be easily proven by  subtracting the equations  \eqref{elliptic-projection} from those in  \Cref{lemma1:HHO_proj}, and noting that $e_h^{u^{\star}}=\mathfrak{p}_h^{k+1}(e_h^{ u},  e_h^{ \widehat{u}} )$.% and that $e_h^{ u}(0)= u_h(0) - \overline{u}_h(0)=0$.

	\begin{lemma}[Error estimates at $t=0$]
	\label{error_0} We have $e^u_h(0)=0$ and
		\begin{gather*}
		\|e_h^{\bm q}(0)\|^2_{\mathcal{T}_h}+\|h_K^{-1/2}(\Pi^{\partial}_ke_h^{u^{\star}}(0) - e_h^{\widehat u}(0))\|_{\partial\mathcal T_h}^2=0.
		%\\
		%\|\Pi_{k+1}^\star u(0) - u_h^\star(0)\|_{\mathcal T_h}  \le Ch^{\ell +2+\min\{\ell,1\}} + Ch^{k+2}. \label{xin-0}
		\end{gather*}
	\end{lemma}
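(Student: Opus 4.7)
The plan is to combine the definition of the discrete initial condition with the standard HDG energy identity applied to the error equations at the single time $t=0$. The first identity, $e^u_h(0)=0$, is immediate from the construction of the scheme in \eqref{HDG-O}: since we set $u_h(0)=\overline u_h(0)$, we have $e^u_h(0)=u_h(0)-\overline u_h(0)=0$.

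For the second identity I would evaluate the two error equations of the preceding lemma at $t=0$ and test them with the canonical energy choices. First, I take $\bm r_h=e_h^{\bm q}(0)$ in the first error equation; since $e^u_h(0)=0$, the divergence term drops and we obtain
\begin{equation*}
\|e_h^{\bm q}(0)\|^2_{\mathcal T_h}+\langle e^{\widehat u}_h(0),\,e_h^{\bm q}(0)\cdot\bm n\rangle_{\partial\mathcal T_h}=0.
\end{equation*}
Next, I take $(v_h,\widehat v_h)=(e^u_h(0),e^{\widehat u}_h(0))=(0,e^{\widehat u}_h(0))$ in the second error equation. Because $v_h=0$, the time derivative term $(\partial_t e^u_h,v_h)_{\mathcal T_h}$, the nonlinearity contribution $(\mathcal I_h F(u_h^\star)-F(u),v_h)_{\mathcal T_h}$, and the right-hand side $(\partial_t(\Pi_\ell^o u-\overline u_h),v_h)_{\mathcal T_h}$ all vanish. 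Moreover, the linearity of the postprocessing operator $\mathfrak p_h^{k+1}$, which follows directly from the linear defining relations \eqref{post}, gives $v_h^\star=\mathfrak p_h^{k+1}(0,e^{\widehat u}_h(0))=\mathfrak p_h^{k+1}(e^u_h(0),e^{\widehat u}_h(0))=e^{u^\star}_h(0)$. Consequently, the second error equation collapses to
\begin{equation*}
-\langle e_h^{\bm q}(0)\cdot\bm n,\,e^{\widehat u}_h(0)\rangle_{\partial\mathcal T_h}+\|h_K^{-1/2}(\Pi^\partial_k e^{u^\star}_h(0)-e^{\widehat u}_h(0))\|^2_{\partial\mathcal T_h}=0.
\end{equation*}

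Adding the two identities causes the boundary pairings $\pm\langle e_h^{\bm q}(0)\cdot\bm n,\,e^{\widehat u}_h(0)\rangle_{\partial\mathcal T_h}$ to cancel, which delivers exactly the claimed identity. I do not anticipate any real obstacle here; the only point that requires a moment's care is the identification $v_h^\star=e^{u^\star}_h(0)$ using linearity of $\mathfrak p_h^{k+1}$ together with $e^u_h(0)=0$. In effect, the lemma is just the standard HDG coercivity estimate evaluated at a single time, made to vanish on the right-hand side by the deliberate choice of initial data as the elliptic projection.
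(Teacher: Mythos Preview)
Your proof is correct and is essentially the paper's own argument: test the error equations of the preceding lemma at $t=0$ with $(\bm r_h,v_h,\widehat v_h)=(e_h^{\bm q}(0),e_h^u(0),e_h^{\widehat u}(0))$, add, and use $e_h^u(0)=0$. You have simply spelled out in more detail why the various terms vanish and why $v_h^\star=e_h^{u^\star}(0)$, which the paper leaves implicit.
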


	\begin{proof}
		Take ${(\bm r_h,v_h,\widehat{v}_h)}:=(e_h^{\bm q}(0),e_h^u(0),e_h^{\widehat{u}}(0))$ in
		the error equations of  \Cref{error}, evaluate at $t=0$ and add the resulting equations. Since $e_h^{ u}(0)= u_h(0) - \overline{u}_h(0)=0$, we get the result.
		%\begin{align*}
		%\hspace{1em}&\hspace{-1em}\|e_h^{\bm q}(0)\|^2_{\mathcal{T}_h}+\|h_K^{-1/2}(\Pi^{\partial}_ke_h^{u^{\star}}(0) - e_h^{\widehat u}(0))\|_{\partial\mathcal T_h}^2=0.
		%\end{align*}
%		This  with Lemma \ref{super_con} proves our result.
	%{This completes the proof.  }
	\end{proof}
	
	Next, we display the main error estimates.
	%\begin{proof} We subtract \eqref{pi1} from \eqref{HDGABC_a} to get \eqref{error1}.
	%Subtracting \eqref{pi2} from \eqref{HDGABC_b} to get
	%\begin{gather*}
	%	(\partial_t e^u_h,v_h)_{\mathcal T_h}+		(\nabla\cdot e^{\bm{q}}_h, v_h)_{\mathcal{T}_h}
	%		-\langle e^{\bm q}_h\cdot\bm n,\widehat{v}_h \rangle_{\partial{\mathcal{T}_h}}+\left\langle
	%		h^{-1}_K( \Pi^{\partial}_k e^{u^\star}_h -e^{\widehat u}_h),\Pi^{\partial}_kv_h^\star-\widehat{v}_h)\right\rangle_{\partial{\mathcal{T}_h}} \\
	%		 +(\mathcal I_h F(u_h^\star) - F(u),v_h)_{\mathcal T_h} = (f-\mathcal I_h F(u_h^{\star}),v_h)_{\mathcal{T}_h}\nonumber\\
	%\quad+(\Delta u-\partial_t\overline{u}_h,v_h)_{\mathcal{T}_h}.
	%		\end{gather*}
	%Since $\partial_t$ and $\Pi_{\ell}^o$ commute and by \eqref{semilinear_pde1} we get
	%\begin{align*}
	%(f-\mathcal I_h F(u_h^{\star})+\Delta u-\partial_t\overline{u}_h,v_h)_{\mathcal{T}_h}
	%&=(F(u)-\mathcal I_h F(u_h^{\star})+\partial_t u-\partial_t\overline{u}_h,v_h)_{\mathcal{T}_h}\nonumber\\
	%&=(F(u)-\mathcal I_h F(u_h^{\star})+\partial_t\Pi_{\ell}^o u-\partial_t\overline{u}_h,v_h)_{\mathcal{T}_h}.
	%	\end{align*}
	%The desired results followed immediately.
	%{This completes the proof.  } \end{proof}
	{
	\begin{lemma} [Main error estimates]\label{error_t} For $t\in [0,T]$, we have
	\begin{alignat*}{2}
		&\|e_h^{u}(t)\|^2_{\mathcal T_h} +  \int_0^t \big(\|e_h^{\bm q}\|^2_{\mathcal{T}_h}
		+\|h_K^{-1/2}(\Pi^{\partial}_ke_h^{u^{\star}} - e_h^{\widehat u})\|_{\partial\mathcal T_h}^2\big)
		&&\le 2\,t\Theta(t),
		\\
		&\| e_h^{\bm q}\|^2_{\mathcal T_h} +
		\|
		h^{-1/2} \Pi^{\partial}_k e^{u^\star}_h -e^{\widehat u}_h\|^2_{\partial{\mathcal{T}_h}} +\int_0^t\|\partial_te_h^u\|^2_{\mathcal{T}_h}
		&& \le 2\,\Theta(t),
	\end{alignat*}
	where $\Theta(t):=\int_0^t \|\partial_t(\Pi^o_{\ell}u-\overline{u}_h)\|^2_{\mathcal{T}_h}
		+\int_0^t \|F(u)-\mathcal I_h F(u_h^\star)\|^2_{\mathcal{T}_h}$.
	\end{lemma}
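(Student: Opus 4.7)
The plan is to derive both bounds by a standard energy argument applied to the error equations of \Cref{error}, exploiting three structural features: (i) the skew-symmetry that makes the volumetric divergence and boundary flux terms cancel when the two equations are added; (ii) the linearity of the postprocessing operator $\mathfrak{p}_h^{k+1}$, so that $\partial_t$ commutes with it and $(e_h^u)^\star=e_h^{u^\star}$; and (iii) the symmetric positive semi-definite structure of the stabilization $\langle h_K^{-1}(\Pi^\partial_k u_h^\star-\widehat u_h),\Pi^\partial_k v_h^\star-\widehat v_h\rangle_{\partial\mathcal T_h}$. Throughout, I will write $g:=\partial_t(\Pi^o_\ell u-\overline u_h)+F(u)-\mathcal I_h F(u_h^\star)$ and use $\|g\|_{\mathcal T_h}^2\le 2\|\partial_t(\Pi^o_\ell u-\overline u_h)\|_{\mathcal T_h}^2+2\|F(u)-\mathcal I_h F(u_h^\star)\|_{\mathcal T_h}^2$, which shows $\int_0^t\|g\|_{\mathcal T_h}^2\le 2\Theta(t)$.

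For the first estimate I would test the error equations with $(\bm r_h,v_h,\widehat v_h):=(e_h^{\bm q},e_h^u,e_h^{\widehat u})$ and add. The coupling terms cancel, and since $(e_h^u)^\star=e_h^{u^\star}$ the stabilization produces a perfect square, yielding the energy identity
\begin{equation*}
\tfrac{1}{2}\tfrac{d}{dt}\|e_h^u\|_{\mathcal T_h}^2+\|e_h^{\bm q}\|_{\mathcal T_h}^2+\|h_K^{-1/2}(\Pi^\partial_k e_h^{u^\star}-e_h^{\widehat u})\|_{\partial\mathcal T_h}^2=(g,e_h^u)_{\mathcal T_h}.
\end{equation*}
Cauchy--Schwarz gives $\tfrac{d}{dt}\|e_h^u\|_{\mathcal T_h}\le\|g\|_{\mathcal T_h}$; integrating from $0$ using $e_h^u(0)=0$ from \Cref{error_0} and applying Cauchy--Schwarz in time yields $\|e_h^u(t)\|_{\mathcal T_h}^2\le 2t\Theta(t)$. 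Integrating the energy identity in time, bounding $\int_0^t(g,e_h^u)_{\mathcal T_h}$ by $\max_{[0,t]}\|e_h^u\|_{\mathcal T_h}\cdot\int_0^t\|g\|_{\mathcal T_h}$, and inserting the $L^\infty$-in-time control just obtained closes the first bound.

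For the second estimate I would differentiate the first error equation in time and test with $\bm r_h:=e_h^{\bm q}$, while testing the second error equation with $(v_h,\widehat v_h):=(\partial_t e_h^u,\partial_t e_h^{\widehat u})$. After addition the coupling terms again cancel; by linearity of $\mathfrak{p}_h^{k+1}$ and $\Pi^\partial_k$, one has $\Pi^\partial_k(\partial_t e_h^u)^\star-\partial_t e_h^{\widehat u}=\partial_t(\Pi^\partial_k e_h^{u^\star}-e_h^{\widehat u})$, so the stabilization contribution becomes an exact time derivative and the combined identity reads
\begin{equation*}
\tfrac{1}{2}\tfrac{d}{dt}\|e_h^{\bm q}\|_{\mathcal T_h}^2+\|\partial_t e_h^u\|_{\mathcal T_h}^2+\tfrac{1}{2}\tfrac{d}{dt}\|h_K^{-1/2}(\Pi^\partial_k e_h^{u^\star}-e_h^{\widehat u})\|_{\partial\mathcal T_h}^2=(g,\partial_t e_h^u)_{\mathcal T_h}.
\end{equation*}
Integrate from $0$ to $t$, invoke $e_h^{\bm q}(0)=0$ and the vanishing initial stabilization from \Cref{error_0}, and apply Young's inequality to the right-hand side to absorb $\tfrac{1}{2}\int_0^t\|\partial_t e_h^u\|_{\mathcal T_h}^2$ into the left-hand side; the remaining data is $\tfrac{1}{2}\int_0^t\|g\|_{\mathcal T_h}^2\le\Theta(t)$, and doubling gives the stated bound.

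The only non-routine step, and the one to which I would devote most care, is the bookkeeping around the stabilization: in the first argument it must be recognized as a nonnegative quadratic form that merges cleanly with the left-hand side, whereas in the second argument one must invoke the linearity of $\mathfrak{p}_h^{k+1}$ so that $\partial_t$ passes through both the postprocessing and $\Pi^\partial_k$, converting the cross-term into an exact time derivative. Everything else is a combination of Cauchy--Schwarz and Young's inequalities, together with the initial-error identities in \Cref{error_0}.
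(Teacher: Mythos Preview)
Your proposal is correct and follows essentially the same route as the paper: test the error equations with $(e_h^{\bm q},e_h^u,e_h^{\widehat u})$ for the first bound, and differentiate the first error equation then test with $(e_h^{\bm q},\partial_t e_h^u,\partial_t e_h^{\widehat u})$ for the second, in each case using that the stabilization becomes either a nonnegative square or an exact time derivative. The paper packages the first step by citing a Gronwall-type inequality from \cite{ChabaudCockburn12}, but that result is precisely the differential inequality $\tfrac{d}{dt}\|e_h^u\|_{\mathcal T_h}\le \|g\|_{\mathcal T_h}$ that you derive. One small refinement: to recover the stated constant $2$ (rather than $4$) in the first estimate, replace your bound $\int_0^t(g,e_h^u)\le \max_{[0,t]}\|e_h^u\|\cdot\int_0^t\|g\|$ by the sharper
\[
\int_0^t \|g(s)\|\,\|e_h^u(s)\|\,ds \;\le\; \int_0^t \|g(s)\|\Big(\int_0^s\|g\|\Big)\,ds \;=\; \tfrac12\Big(\int_0^t\|g\|\Big)^2,
\]
which, after Cauchy--Schwarz in time, yields $\|e_h^u(t)\|^2+2\int_0^t(\cdots)\le 2t\,\Theta(t)$.
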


	\begin{proof}
		We first take $(\bm r_h,v_h,\widehat{r}_h):=(e_h^{\bm q},e_h^u,e_h^{\widehat{u}})$ in the error equations of  \Cref{error}, and add the resulting equations to get
		\[
		(\partial_t e_h^{u}, e_h^{u})_{\mathcal T_h} + \|e_h^{\bm q}\|^2_{\mathcal{T}_h}
		+\|h_K^{-1/2}(\Pi^{\partial}_ke_h^{u^{\star}} - e_h^{\widehat u})\|_{\partial\mathcal T_h}^2=(\partial_t(\Pi^o_{\ell}u-\overline{u}_h),e_h^{u})_{\mathcal{T}_h}+(F(u)-\mathcal I_h F(u_h^\star),e_h^{u})_{\mathcal{T}_h}.
		\]
		We now apply the Cauchy-Schwarz inequality to both terms of the right-hand side and then use a Gronwall-like inequality \cite[Proposition 3.1]{ChabaudCockburn12} and the fact that $e^u_h(0)=0$ to obtain
		\begin{alignat*}{2}
				&\|e_h^{u}(t)\|^2_{\mathcal T_h} + 2 \int_0^t \big(\|e_h^{\bm q}\|^2_{\mathcal{T}_h}
		+\|h_K^{-1/2}(\Pi^{\partial}_ke_h^{u^{\star}} - e_h^{\widehat u})\|_{\partial\mathcal T_h}^2\big)
		&&\le \bigg(\int_0^t \|\partial_t(\Pi^o_{\ell}u-\overline{u}_h)\|_{\mathcal{T}_h}
		\\
		&&&
		+\int_0^t\|F(u)-\mathcal I_h F(u_h^\star)
		\|_{\mathcal{T}_h}\bigg)^2.
	\end{alignat*}
		The first inequality is obtained after simple manipulations.
		
		Next, we take the partial derivative of with respect to $t$ in the first error equation of \Cref{error}
		and keep the second equation unchanged. We obtain
		\begin{align*}
		(\partial_te_h^{\bm q},\bm{r}_h)_{\mathcal{T}_h}-(\partial_te^u_h,\nabla\cdot \bm{r}_h)_{\mathcal{T}_h}+\langle \partial_te^{\widehat{u}}_h,\bm r_h\cdot\bm n \rangle_{\partial{\mathcal{T}_h}} &= 0,\\
		(\partial_t e^u_h,v_h)_{\mathcal T_h}+		(\nabla\cdot e^{\bm{q}}_h, v_h)_{\mathcal{T}_h}
		-\langle e^{\bm q}_h\cdot\bm n,\widehat{v}_h \rangle_{\partial{\mathcal{T}_h}} \quad&
		\nonumber\\+\langle
		h^{-1}_K( \Pi^{\partial}_k e^{u^\star}_h -e^{\widehat u}_h),\Pi^{\partial}_kv_h^\star-\widehat{v}_h\rangle_{\partial{\mathcal{T}_h}}  +(\mathcal I_h F(u_h^\star) - F(u), v_h)_{\mathcal{T}_h}&= (\partial_t(\Pi^o_{\ell}u-\overline{u}_h),v_h)_{\mathcal{T}_h}.
		\end{align*}
		Taking $(\bm r_h,v_h,\widehat{r}_h):=(e_h^{\bm q},\partial_te_h^u,\partial_te_h^{\widehat{u}})$ in {these equations and adding them, we get}
		\begin{align*}%\label{norm2}
		\hspace{1em}&\hspace{-1em}(\partial_t e_h^{\bm q}, e_h^{\bm q})_{\mathcal T_h} +
		\langle
		h^{-1}_K( \Pi^{\partial}_k e^{u^\star}_h -e^{\widehat u}_h), \partial_t\Pi^{\partial}_k e^{u^\star}_h -\partial_te^{\widehat u}_h\rangle_{\partial{\mathcal{T}_h}} +\|\partial_te_h^u\|^2_{\mathcal{T}_h}\nonumber\\
		& =(\partial_t(\Pi^o_{\ell}u-\overline{u}_h),\partial_te_h^{u})_{\mathcal{T}_h}+(F(u)-\mathcal I_h F(u_h^\star),\partial_te_h^{u})_{\mathcal{T}_h}.
		\end{align*}
We now apply the Cauchy-Schwarz inequality to each of the two terms of the right-hand side, use Young's inequality and the estimates of the errors at $t=0$ of  \Cref{error_0} to get the second estimate.
%We now apply the Cauchy-Schwarz inequality to each of the two terms of the right-hand side, use Young's inequality and use a Gronwall-like inequality {and the estimates of the errors at $t=0$ of  \Cref{error_0}} to get the second estimate.  {This completes the proof.  } \end{proof}
\end{proof}

	%		\begin{lemma}\label{colorray_ustar}
%		For any $ \overline{u}_h $ and $ \overline{\bm q}_h $, we have 
%		\begin{align*}
%		\|\Pi_{k+1}^\star u - u_h^\star\|_{\mathcal T_h}  &\le C ( \|u_h-\overline{u}_h\|_{\mathcal T_h}+h\|\bm{q}_h-\overline{\bm{q}}_h\|_{\mathcal T_h} )\nonumber\\
%		&\quad + C ( \|\overline{u}_h-\Pi_{\ell}^o u\|_{\mathcal T_h}+h\|\overline{\bm{q}}_h-\bm{\Pi}^o_k\bm{q}\|_{\mathcal T_h} +h\|\bm{q}-\bm{\Pi}^o_k\bm{q}\|_{\mathcal T_h} ).
%		\end{align*}
%	\end{lemma}

	\subsection{The Lipschitz conditions on the nonlinearity}
	Here, we end our error analysis. We bound the term $\|F(u)-\mathcal I_h F(u_h^\star)\|_{\mathcal{T}_h}$ under different assumptions on the nonlinearity $F(u)$ and conclude. To do that, we need
	the following auxiliary result. Its proof is given in  \Cref{HHOProjections}.
	
			\begin{lemma}\label{super_con}
		We have
		\begin{align*}
		\|\Pi_{k+1}^\star u - u_h^\star\|_{\mathcal T_h}  \le C ( \|u_h-\Pi_{\ell}^o u\|_{\mathcal T_h}+h\|\bm{q}_h-\bm{\Pi}^o_k\bm{q}\|_{\mathcal T_h} +h\|\bm{q}-\bm{\Pi}^o_k\bm{q}\|_{\mathcal T_h} ).
		\end{align*}
	\end{lemma}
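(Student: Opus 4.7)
The plan is to exploit the linearity of the local postprocessing operator $\mathfrak{p}_h^{k+1}$: on each element $K \in \mathcal{T}_h$, write
\[
\delta := \Pi_{k+1}^\star u - u_h^\star = \mathfrak{p}_h^{k+1}(\Pi_\ell^o u - u_h,\, \Pi_k^\partial u - \widehat{u}_h) \in \mathcal{P}^{k+1}(K),
\]
and then split $\delta = \delta^0 + \delta^\perp$ with $\delta^0 \in \mathcal{P}^\ell(K)$ and $\delta^\perp \in [\mathcal{P}^{k+1}_\ell(K)]^\perp$, estimating each piece separately via the two defining identities \eqref{pp1} and \eqref{pp2}.

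First I would identify $\delta^0$. Applying \eqref{pp2} to the difference yields $(\delta, w_h)_K = (\Pi_\ell^o u - u_h, w_h)_K$ for every $w_h \in \mathcal{P}^\ell(K)$, so that $\delta^0 = \Pi_\ell^o u - u_h$. This already produces the first term in the stated bound with equality.

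Next I would extract a gradient identity for $\delta^\perp$. Testing \eqref{pp1} against an arbitrary $z_h \in [\mathcal{P}^{k+1}_\ell(K)]^\perp$ and peeling off the $L^2$-projections $\Pi_\ell^o$ and $\Pi_k^\partial$ (legitimate because $\Delta z_h \in \mathcal{P}^{k-1}(K) \subseteq \mathcal{P}^\ell(K)$ in all three cases (A), (B), (C), and $\bm n \cdot \nabla z_h \in \mathcal{P}^k(\partial K)$), then integrating by parts and inserting the first HDG equation \eqref{HDG-O_a} with the admissible $\bm r_h = \nabla z_h \in [\mathcal{P}^k(K)]^d$ to trade $\nabla u_h$ for $\bm q_h$ while recalling $\bm q = -\nabla u$, I arrive at the clean identity
\[
(\nabla \delta, \nabla z_h)_K = (\bm q_h - \bm q, \nabla z_h)_K \qquad \forall\, z_h \in [\mathcal{P}^{k+1}_\ell(K)]^\perp.
\]
Choosing $z_h = \delta^\perp$ and expanding $\nabla \delta = \nabla \delta^0 + \nabla \delta^\perp$, Cauchy--Schwarz gives $\|\nabla \delta^\perp\|_K \le \|\bm q_h - \bm q\|_K + \|\nabla \delta^0\|_K$. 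The triangle-inequality split $\bm q_h - \bm q = (\bm q_h - \bm \Pi_k^o \bm q) + (\bm \Pi_k^o \bm q - \bm q)$ produces the second and third terms of the target bound, while the inverse inequality $\|\nabla \delta^0\|_K \le C h_K^{-1}\|\delta^0\|_K$ handles the $\delta^0$-contribution. Finally, since $\delta^\perp$ has vanishing $L^2$-mean on $K$ (because $1 \in \mathcal{P}^\ell$), a Poincaré--Friedrichs inequality gives $\|\delta^\perp\|_K \le C h_K \|\nabla \delta^\perp\|_K$, which injects the factor $h$ in front of the $\bm q$-terms and absorbs the $h_K^{-1}$ coming from the inverse inequality. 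Assembling through $\|\delta\|_K \le \|\delta^0\|_K + \|\delta^\perp\|_K$ and summing squares over $K \in \mathcal{T}_h$ delivers the claimed estimate.

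The main obstacle I anticipate is the careful bookkeeping of the polynomial-degree inclusions across the three choices of $\ell$, so that the identity for $(\nabla \delta, \nabla z_h)_K$ closes without residual projection terms; the corner case $\ell = k-1$ of scheme (C) is the tightest, and it is precisely where the restriction $k \ge 1$ emerges. A related technical point is that the Poincaré--Friedrichs constant must be uniform over the polyhedral mesh, which ultimately relies on the shape-regularity of $\mathcal{T}_h$.
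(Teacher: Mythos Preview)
Your proposal is correct and follows essentially the same route as the paper. The paper's proof defines $e_h := u_h^\star - u_h + \Pi_\ell^o u - \Pi_{k+1}^\star u$, which is exactly your $-\delta^\perp$; it derives the same gradient identity $(\nabla u_h^\star,\nabla z_h)_K = -(\bm q_h,\nabla z_h)_K$ and $(\nabla\Pi_{k+1}^\star u,\nabla z_h)_K = (\nabla u,\nabla z_h)_K$ for $z_h\in[\mathcal P_\ell^{k+1}(K)]^\perp$, and then applies the same inverse inequality to $\nabla(\Pi_\ell^o u - u_h)$ and the same Poincar\'e inequality (using $(e_h,1)_K=0$) to conclude.
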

	
	\subsubsection{Error estimates for a global Lipschitz condition}
	\label{ErrorestimatesunderGlobalLipschitzcondition}
	Here, we assume the nonlinearity is globally Lipschitz.

\begin{lemma}\label{non_est}
		We have 
		\begin{align*}
		\|F( u)-\mathcal I_hF( u_h^\star)\|_{\mathcal{T}_h}\le
		\| F(u)- \mathcal{I}_h F(u)\|_{\mathcal T_h} + C\big(\|  u- \mathcal I_h u\|_{\mathcal T_h} +\|  u- \Pi_{k+1}^\star u\|_{\mathcal T_h} + \| \Pi_{k+1}^\star u- u_h^\star\|_{\mathcal T_h}\big).
		\end{align*}
	\end{lemma}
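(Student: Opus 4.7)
The plan is a two-step triangle-inequality argument. First, insert $\mathcal I_h F(u)$ as an intermediate quantity to obtain
\[
\|F(u)-\mathcal I_h F(u_h^\star)\|_{\mathcal T_h}\le \|F(u)-\mathcal I_h F(u)\|_{\mathcal T_h}+\|\mathcal I_h[F(u)-F(u_h^\star)]\|_{\mathcal T_h}.
\]
The first term on the right already appears in the target bound, so all the work reduces to controlling the second one by $C\|\mathcal I_h u-u_h^\star\|_{\mathcal T_h}$; the splitting
$\mathcal I_h u - u_h^\star = (\mathcal I_h u - u) + (u - \Pi_{k+1}^\star u) + (\Pi_{k+1}^\star u - u_h^\star)$
combined with a final triangle inequality then produces the three remaining terms in the claim.

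The heart of the argument is the local $L^2$-stability of $\mathcal I_h$ applied to the nonlinear difference. On each $K\in\mathcal T_h$, the polynomial $\mathcal I_h[F(u)-F(u_h^\star)]$ lies in $\mathcal P^{k+1}(K)$ and is determined by the nodal values $F(u(x_i))-F(u_h^\star(x_i))$ at the Lagrange nodes $\{x_i\}$. A standard reference-element scaling under shape regularity gives the norm equivalence
\[
\|p\|_{L^2(K)}^2 \;\asymp\; h_K^d\sum_i |p(x_i)|^2\qquad \forall\, p\in\mathcal P^{k+1}(K).
\]
Applying this to $p=\mathcal I_h[F(u)-F(u_h^\star)]$, together with the pointwise Lipschitz estimate \eqref{g_F} at each node, yields
\[
\|\mathcal I_h[F(u)-F(u_h^\star)]\|_{L^2(K)}\le C\,L\, h_K^{d/2}\Bigl(\sum_i |u(x_i)-u_h^\star(x_i)|^2\Bigr)^{1/2}.
\]

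The pivotal observation is that $u_h^\star|_K\in\mathcal P^{k+1}(K)\subset Z_h$, so $\mathcal I_h u_h^\star=u_h^\star$ on $K$; hence $\mathcal I_h u-u_h^\star$ is itself a polynomial in $\mathcal P^{k+1}(K)$ whose nodal values are precisely $u(x_i)-u_h^\star(x_i)$, and the reverse direction of the norm equivalence converts the previous display into $C\,L\,\|\mathcal I_h u-u_h^\star\|_{L^2(K)}$. Summing the local estimates over $K\in\mathcal T_h$ and inserting the three-term triangle inequality above then finishes the proof. The only substantive step is this local stability of $\mathcal I_h$ on a Lipschitz composition; the rest is bookkeeping, and I expect no further obstacles, since the resulting constant depends only on $L$, shape-regularity, and the unisolvence of the Lagrange nodes for $\mathcal P^{k+1}(K)$.
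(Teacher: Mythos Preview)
Your proof is correct and follows essentially the same approach as the paper: both split off $\|F(u)-\mathcal I_hF(u)\|_{\mathcal T_h}$ first and then control the interpolated nonlinear difference via the nodal $L^2$-stability of $\mathcal I_h$ combined with the Lipschitz condition at the Lagrange nodes (the paper cites this step from \cite{CockburnSinglerZhang1}, whereas you spell it out). The only cosmetic difference is where $\Pi_{k+1}^\star u$ is inserted: the paper puts it inside the interpolated nonlinear term, writing $R_2=\mathcal I_hF(u)-\mathcal I_hF(\Pi_{k+1}^\star u)$ and $R_3=\mathcal I_hF(\Pi_{k+1}^\star u)-\mathcal I_hF(u_h^\star)$, while you bound $\|\mathcal I_h[F(u)-F(u_h^\star)]\|_{\mathcal T_h}\le C\|\mathcal I_h u-u_h^\star\|_{\mathcal T_h}$ directly and only then insert $\Pi_{k+1}^\star u$ in the linear triangle inequality.
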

	
	\begin{proof}
	
	To bound the error in the nonlinear term, we write
	$
	F( u)-\mathcal I_hF( u_h^\star) = R_1 + R_2 + R_3$,
			where
		\begin{alignat*}{1}
		   R_1&:=F(u)- \mathcal I_h F(u),
		   \quad
		   R_2:=\mathcal I_h F(u)  -  \mathcal I_h F(\Pi_{k+1}^\star u),
		   \quad
		   R_3:=
\mathcal I_h F(\Pi_{k+1}^\star u) -\mathcal I_hF( u_h^\star).
\end{alignat*}
The result follows since
\begin{alignat*}{2}
	\|R_2 \|_{\mathcal T_h}&\le C (\|  u- \mathcal I_h u\|_{\mathcal T_h} +\|  u- \Pi_{k+1}^\star u\|_{\mathcal T_h}  )\quad\text{ and }\quad
		\| R_3\|_{\mathcal T_h}  \le C\| \Pi_{k+1}^\star u- u_h^\star\|_{\mathcal T_h}.
		\end{alignat*}
	as shown in \cite{CockburnSinglerZhang1}.
	This completes the proof.% and we omit the details.
	\end{proof}
	
	\begin{lemma}\label{estimateTheta}
	For $t\in [0,T]$, we have
	that 
	\[
	\Theta(t)\le \Theta_{HDG}(t)+\Theta_{APP}(t)+
		C\int_0^t(\|e^u_h\|^2_{\mathcal T_h}+h^2\|e^{\bm{q}}_h\|^2_{\mathcal T_h}),
	\]
	where
	\begin{alignat*}{1}
		\Theta_{HDG}(t)&:= 
		\int_0^t
		\|\partial_t(\Pi^o_{\ell}u-\overline{u}_h)\|^2_{\mathcal{T}_h}
		+C \int_0^t( \|\overline{u}_h-\Pi_{\ell}^o u\|^2_{\mathcal T_h}+h^2\|\overline{\bm{q}}_h-\bm{\Pi}^o_k\bm{q}\|^2_{\mathcal T_h}),
		\\
		\Theta_{APP}(t)&:=C
		\int_0^t (\|F(u)-\mathcal I_h F(u)\|^2		_{\mathcal{T}_h}+ \|u-\mathcal I_h u\|^2_{\mathcal{T}_h}
		+\|u-\Pi_{k+1}^\star u\|^2_{\mathcal{T}_h}+h^2\|\bm{q}-\bm{\Pi}^o_k\bm{q}\|^2_{\mathcal T_h} ).
		\end{alignat*}
	\end{lemma}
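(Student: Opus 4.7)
The plan is to take the definition of $\Theta(t)$ as written, keep the first integral term as-is (it coincides verbatim with the first term of $\Theta_{HDG}(t)$), and expand the nonlinear contribution $\int_0^t \|F(u)-\mathcal I_h F(u_h^\star)\|^2_{\mathcal{T}_h}$ by chaining \Cref{non_est} with \Cref{super_con}, followed by a triangle-inequality split through the HDG elliptic projection $(\overline{\bm q}_h,\overline u_h)$.

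First I would apply \Cref{non_est} pointwise in $t$, then square and integrate, using the elementary bound $(a_1+a_2+a_3+a_4)^2 \le 4\sum a_i^2$, to obtain
\begin{align*}
\int_0^t\|F(u)-\mathcal I_h F(u_h^\star)\|^2_{\mathcal{T}_h}
&\le C\int_0^t\big(\|F(u)-\mathcal I_h F(u)\|^2_{\mathcal T_h}+\|u-\mathcal I_h u\|^2_{\mathcal T_h}\\
&\qquad+\|u-\Pi_{k+1}^\star u\|^2_{\mathcal T_h}+\|\Pi_{k+1}^\star u-u_h^\star\|^2_{\mathcal T_h}\big).
\end{align*}
The first three terms already match the corresponding contributions to $\Theta_{APP}(t)$; the remaining work is to absorb the last term.

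Next I would invoke \Cref{super_con} to bound $\|\Pi_{k+1}^\star u-u_h^\star\|_{\mathcal T_h}$ by $\|u_h-\Pi_\ell^o u\|_{\mathcal T_h}$, $h\|\bm q_h-\bm\Pi^o_k\bm q\|_{\mathcal T_h}$, and $h\|\bm q-\bm\Pi^o_k\bm q\|_{\mathcal T_h}$. I would then decompose the first two of these through the elliptic approximation,
\begin{align*}
u_h-\Pi_\ell^o u &= e_h^u + (\overline u_h-\Pi_\ell^o u),\qquad
\bm q_h-\bm\Pi^o_k\bm q = e_h^{\bm q} + (\overline{\bm q}_h-\bm\Pi^o_k\bm q),
\end{align*}
and again apply $(a+b)^2 \le 2a^2+2b^2$ after squaring. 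The $e_h^u$ and $h\, e_h^{\bm q}$ contributions produce the last integral on the right-hand side of the claimed bound (the one containing $\|e_h^u\|^2_{\mathcal T_h}+h^2\|e_h^{\bm q}\|^2_{\mathcal T_h}$), while the $\overline u_h-\Pi_\ell^o u$ and $h(\overline{\bm q}_h-\bm\Pi^o_k\bm q)$ contributions feed into $\Theta_{HDG}(t)$, and $h(\bm q-\bm\Pi^o_k\bm q)$ feeds into $\Theta_{APP}(t)$. Collecting all the pieces yields exactly the stated decomposition.

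There is no real obstacle here; the step that requires the most care is the bookkeeping of the triangle-inequality splits and ensuring each squared term lands in the correct bucket ($\Theta_{HDG}$, $\Theta_{APP}$, or the Gronwall-absorbable remainder). The factor of $h$ attached to the flux terms in \Cref{super_con} is essential: it is what makes $h^2\|e_h^{\bm q}\|^2_{\mathcal T_h}$ (rather than $\|e_h^{\bm q}\|^2_{\mathcal T_h}$) appear in the remainder, which will later matter when this lemma is combined with \Cref{error_t} to close the estimate via a Gronwall argument.
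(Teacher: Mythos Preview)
Your proposal is correct and matches the paper's proof essentially step for step: the paper applies \Cref{non_est}, then \Cref{super_con}, then splits $u_h-\Pi_\ell^o u$ and $\bm q_h-\bm\Pi_k^o\bm q$ through the elliptic projection via the triangle inequality, and finally inserts the resulting pointwise bound into the definition of $\Theta(t)$. The only cosmetic difference is that the paper carries the chain of inequalities at the level of $\|F(u)-\mathcal I_h F(u_h^\star)\|_{\mathcal T_h}$ before squaring and integrating, whereas you square and integrate after each step; the outcome is identical.
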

	
	We note that $ \Theta_{HDG} $ involves the HDG elliptic approximation, while $ \Theta_{APP} $ involves only approximations of the exact solution of the PDE and related quantities. 
	
	\begin{proof} We have, by \Cref{non_est},
		\begin{alignat*}{1}
		\Theta_h&:=\|F(u)-\mathcal I_h F(u_h^\star)\|_{\mathcal{T}_h}
	    \\
	    &\le \|F(u)-\mathcal I_h F(u)\|_{\mathcal{T}_h}+C( \|u-\mathcal I_h u\|_{\mathcal{T}_h} +\|u-\Pi_{k+1}^\star u\|_{\mathcal{T}_h} +\|\Pi_{k+1}^\star u-u_h^{\star}\|_{\mathcal{T}_h} )
	    \\
	    &\le \|F(u)-\mathcal I_h F(u)\|_{\mathcal{T}_h}+C( \|u-\mathcal I_h u\|_{\mathcal{T}_h} +\|u-\Pi_{k+1}^\star u\|_{\mathcal{T}_h}) 
		\\
		&\quad+C ( \|{u}_h-\Pi_{\ell}^o u\|_{\mathcal T_h}+h\|{\bm{q}}_h-\bm{\Pi}^o_k\bm{q}\|_{\mathcal T_h} +h\|\bm{q}-\bm{\Pi}^o_k\bm{q}\|_{\mathcal T_h} ),
		\end{alignat*}
		by \Cref{super_con}. Using the definition of $e^u_h$ and $e^{\bm q}_h$, and the triangle inequality, we get
			\begin{alignat*}{1}
		\|F(u)-\mathcal I_h F(u_h^\star)\|_{\mathcal{T}_h}
	    &\le \|F(u)-\mathcal I_h F(u)\|_{\mathcal{T}_h}+C( \|u-\mathcal I_h u\|_{\mathcal{T}_h} +\|u-\Pi_{k+1}^\star u\|_{\mathcal{T}_h}) 
		\\
		&\quad+C ( \|\overline{u}_h-\Pi_{\ell}^o u\|_{\mathcal T_h}+h\|\overline{\bm{q}}_h-\bm{\Pi}^o_k\bm{q}\|_{\mathcal T_h} +h\|\bm{q}-\bm{\Pi}^o_k\bm{q}\|_{\mathcal T_h} )
		\\
		&\quad+C(\|e^u_h\|_{\mathcal T_h}+h\|e^{\bm{q}}_h\|_{\mathcal T_h}).
		\end{alignat*}
		Inserting this bound in the definition of $\Theta(t)$, we obtain the desired result. This completes the proof.
		\end{proof}
	
    \begin{lemma}\label{lemma:global_Lip_error_estimate}  
    For $t\in [0,T]$, we have
	\begin{alignat*}{2}
		&\|e_h^{u}(t)\|^2_{\mathcal T_h} +  \int_0^t \big(\|e_h^{\bm q}\|^2_{\mathcal{T}_h}
		+\|h_K^{-1/2}(\Pi^{\partial}_ke_h^{u^{\star}} - e_h^{\widehat u})\|_{\partial\mathcal T_h}^2\big)
		&&\le 2\,t\Phi(T),
		\\
		&\| e_h^{\bm q}\|^2_{\mathcal T_h} +
		\|
		h^{-1/2} \Pi^{\partial}_k e^{u^\star}_h -e^{\widehat u}_h\|^2_{\partial{\mathcal{T}_h}} +\int_0^t\|\partial_te_h^u\|^2_{\mathcal{T}_h}
		&& \le 2\,\Phi(T),
	\end{alignat*}
	where $\Phi(T):=C(T)(\Theta_{HDG}(T)+\Theta_{APP}(T))$.
	\end{lemma}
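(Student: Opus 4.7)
The plan is to close a Gronwall bootstrap: combine \Cref{error_t} with \Cref{estimateTheta} to obtain a self-referential integral inequality for $\Theta(t)$, deduce $\Theta(t)\le \Phi(T)$, and then plug back into \Cref{error_t} to recover the two advertised inequalities.

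I first extract pointwise-in-time bounds from \Cref{error_t}: for every $s\in[0,T]$, the first estimate gives $\|e_h^u(s)\|^2_{\mathcal T_h}\le 2s\,\Theta(s)\le 2T\,\Theta(s)$, and the second gives $\|e_h^{\bm q}(s)\|^2_{\mathcal T_h}\le 2\Theta(s)$. Integrating over $[0,t]\subset[0,T]$ yields
$$\int_0^t\bigl(\|e_h^u(s)\|^2_{\mathcal T_h}+h^2\|e_h^{\bm q}(s)\|^2_{\mathcal T_h}\bigr)\,ds\le 2(T+h^2)\int_0^t \Theta(s)\,ds.$$
Substituting this into the inequality of \Cref{estimateTheta} and using the monotonicity of $\Theta_{HDG}$ and $\Theta_{APP}$ in $t$ to replace $\Theta_{HDG}(t)+\Theta_{APP}(t)$ by $\Theta_{HDG}(T)+\Theta_{APP}(T)$, I arrive at
$$\Theta(t)\le \Theta_{HDG}(T)+\Theta_{APP}(T)+C(T)\int_0^t \Theta(s)\,ds,$$
where $C(T)$ depends on $T$ and on an a priori upper bound for $h$.

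A direct application of the standard Gronwall inequality then gives $\Theta(t)\le e^{C(T)\,t}\bigl(\Theta_{HDG}(T)+\Theta_{APP}(T)\bigr)$ for all $t\in[0,T]$. Absorbing the exponential factor into the generic constant $C(T)$ in the statement, so that $\Phi(T):=C(T)(\Theta_{HDG}(T)+\Theta_{APP}(T))$, we obtain $\Theta(t)\le \Phi(T)$. Inserting this bound into the two inequalities of \Cref{error_t} produces exactly the two estimates of \Cref{lemma:global_Lip_error_estimate}.

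The only delicate point is the bookkeeping of constants in the Gronwall step and the verification that no hidden dependence on $h$ survives beyond the $h^2$ factor, which is controlled along any admissible refinement sequence. Everything else is mechanical given \Cref{error_t} and \Cref{estimateTheta}.
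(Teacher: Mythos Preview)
Your proposal is correct and follows essentially the same route as the paper: combine \Cref{estimateTheta} with the pointwise bounds $\|e_h^u(s)\|^2_{\mathcal T_h}\le 2s\,\Theta(s)$ and $\|e_h^{\bm q}(s)\|^2_{\mathcal T_h}\le 2\Theta(s)$ from \Cref{error_t}, apply Gronwall to the resulting integral inequality for $\Theta$, and insert the bound back into \Cref{error_t}. The only cosmetic difference is that the paper keeps the kernel $(s+h^2)$ in the Gronwall step whereas you first bound $s\le T$ to get a constant-coefficient inequality; both are equivalent after absorbing constants into $C(T)$.
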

	
	\begin{proof} By the previous lemma, we have, for all $t\in [0,T]$,
	\begin{alignat*}{1}
	\Theta(t)\le& \Theta_{HDG}(T)+\Theta_{APP}(T)+
		C\int_0^t(\|e^u_h\|^2_{\mathcal T_h}+h^2\|e^{\bm{q}}_h\|^2_{\mathcal T_h})
	\\
	\le& \Theta_{HDG}(T)+\Theta_{APP}(T)+
		C\int_0^t(s+h^2)\,\Theta(s)\,ds,
    \end{alignat*}
	by \Cref{error_t}. By applying the Gronwall inequality, we get that $\Theta(t)\le C(T)\,\Phi(T)$. The result now follows by using the main estimates of \Cref{error_t}.% This completes the proof.
 \end{proof}
	%\begin{corollary} For $u$ smooth enough, we have:
	%when $k=1,\ell=0$
	%\begin{align*}
	%\|u-u_h\|_{\mathcal{T}_h}\le Ch^{\ell+1},\ \|\bm q-\bm q_h\|_{\mathcal{T}_h}\le C h^{k+1},  \|u-u_h^{\star}\|_{\mathcal{T}_h}\le C h^{k+1},
	%\end{align*}
	%otherwise  we have
	%\begin{align*}
	%\|u-u_h\|_{\mathcal{T}_h}\le Ch^{\ell+1},\ \|\bm q-\bm q_h\|_{\mathcal{T}_h}\le C h^{k+1},  \|u-u_h^{\star}\|_{\mathcal{T}_h}\le C h^{k+2}.
	%\end{align*}
	%\end{corollary}

	%{\color{red}The convergence estimates for $ \bm q - \bm q_h $ and $ u - u_h $ in the main result follow from the triangle inequality and the estimates in \Cref{lemmainter} and \Cref{error_dual2}.  The superconvergent estimate for $ u - u_h^\star $ in the main result follows follow from the triangle inequality, \Cref{lemma1:HHO_proj}, \Cref{colorray_ustar}, and the estimates in \Cref{lemmainter} and \Cref{error_dual2}. How about this: Since $ \bm q - \bm q_h = \bm q - \bm \Pi_k^o \bm q + \bm \Pi_k^o \bm q  - \overline{\bm q}_h + \overline{\bm q}_h- \bm q_h$ and $u-u_h = u -  \Pi_k^o  u +  \Pi_k^o u  - \overline{u}_h + \overline{u}_h- u_h$, hence the convergence estimates for $\bm q - \bm q_h$ and $u - u_h$ in the main result follow from the triangle inequality, the estimate in \Cref{lemmainter}, the \Cref{error_dual2} and \Cref{lemma:global_Lip_error_estimate}. The superconvergent estimate for $ u - u_h^\star $ in the main result follows follow from the triangle inequality, \Cref{lemma1:HHO_proj}, \Cref{colorray_ustar}, and the estimates in \Cref{error_dual2} and \Cref{lemma:global_Lip_error_estimate}. }

	}
	
	\subsubsection{Error estimates for a local Lipschitz condition}
	\label{local_Lipschitz}
	In this section we assume that the nonlinearity $F$ is only locally Lipschitz, as is the case in many applications. To deal with this case, we assume that
	the mesh $\mathcal{T}_h$ is quasi-uniform.

	\begin{lemma}\label{eqn:loc_Lip_main_estimate}
		Assume the 	mesh $\mathcal{T}_h$ is quasi-uniform, and $d\in [2,2k+4)$ if $ (k,\ell) \neq (1,0) $ or $d\in [2,2k+2)$ if $ (k,\ell) = (1,0) $. Then for $h$ small enough and $t\in (0,T]$, the error estimates of \Cref{lemma:global_Lip_error_estimate} hold.
		%
		%\begin{align*}
		%\hspace{1em}&\hspace{-1em}
		%\|e_h^{u}\|^2_{\mathcal{T}_h}
		%+\|e_h^{\bm q}\|^2_{\mathcal{T}_h}
		%+\|h^{-1/2}_K( \Pi^{\partial}_k e^{u^\star}_h -e^{\widehat u}_h)\|_{\partial\mathcal{T}_h}^2+\int_0^t\left(\|\partial_te_h^{u}\|^2_{\mathcal{T}_h}+\|e_h^{\bm q}\|^2_{\mathcal{T}_h}+\|h_K^{-1/2}(\Pi^{\partial}_ke_h^{u^{\star}} - e_h^{\widehat u})\|_{\partial\mathcal T_h}^2\right) \nonumber\\
		%
		%
		%& \le  C\int_0^t\left(\|\partial_t(\Pi^o_{\ell}u-\overline{u}_h)\|^2_{\mathcal{T}_h}+
		%\|F(u)-\mathcal I_h F(u)\|_{\mathcal{T}_h}^2+\|u-\Pi_{k+1}^\star u\|_{\mathcal{T}_h}^2 + \|u-\mathcal I_h u\|_{\mathcal{T}_h}^2\right)\nonumber\\
		%&\quad +C\int_0^t\left(
		%\|\overline{u}_h-\Pi_{\ell}^o u\|_{\mathcal T_h}+h^2\|\overline{\bm{q}}_h-\bm{\Pi}^o_k\bm{q}\|_{\mathcal T_h} + 
		%h^2\|\bm{q}-\bm\Pi_k^o\bm{q}\|^2_{\math%cal T_h} \right).
	%	\end{align*}
	%
	\end{lemma}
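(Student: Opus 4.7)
The strategy is a standard bootstrap (continuation) argument: we show that, under the quasi-uniformity and dimension hypothesis, the computed $u_h^\star(t)$ cannot leave the strip $[-M,M]$ where $F$ is Lipschitz, so that the global-Lipschitz estimate of \Cref{lemma:global_Lip_error_estimate} applies on the whole interval $[0,T]$.

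First, I would introduce the stopping time
\[
t^\star:=\sup\big\{\,t\in[0,T] : \|u_h^\star(s)\|_{L^\infty(\Omega)}\le M \text{ for all } s\in[0,t]\,\big\}.
\]
By continuity in $t$ and the fact that $u_h^\star(0)$ is the postprocessing of the HDG (ABC) elliptic projection at $t=0$ (so $\|u_h^\star(0)\|_{L^\infty}\le \|u_0\|_{L^\infty}+o(1)<M$ for $h$ small), we have $t^\star>0$. On $[0,t^\star]$ the range of $u_h^\star$ lies in $D=[-M,M]$, so the local Lipschitz condition \eqref{g_F} gives exactly the same bound on $\|F(u)-\mathcal I_h F(u_h^\star)\|_{\mathcal T_h}$ that was used in \Cref{non_est}. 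Therefore the whole chain of arguments (\Cref{error_t}, \Cref{estimateTheta}, Gronwall in \Cref{lemma:global_Lip_error_estimate}) goes through verbatim on $[0,t^\star]$, with the same constants.

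Second, I would upgrade the $L^2$ rate for $u^\star$ into an $L^\infty$ rate via an inverse inequality. Writing
\[
\|u_h^\star(t)-u(t)\|_{L^\infty(\Omega)}
\le \|u_h^\star(t)-\Pi_{k+1}^\star u(t)\|_{L^\infty(\Omega)}
+\|\Pi_{k+1}^\star u(t)-u(t)\|_{L^\infty(\Omega)},
\]
the second term is $O(h^{k+2-d/2})$ by standard approximation (smaller than $\delta$ for $h$ small under the hypothesis $d<2k+4$). For the first term, since $u_h^\star-\Pi_{k+1}^\star u$ is piecewise polynomial of degree $k+1$ on a quasi-uniform mesh, the inverse inequality gives
\[
\|u_h^\star-\Pi_{k+1}^\star u\|_{L^\infty(\Omega)}
\le C\,h^{-d/2}\,\|u_h^\star-\Pi_{k+1}^\star u\|_{\mathcal T_h}
\le C\,h^{-d/2}\big(\|u_h^\star-u\|_{\mathcal T_h}+\|u-\Pi_{k+1}^\star u\|_{\mathcal T_h}\big).
\]
On $[0,t^\star]$ the global-Lipschitz estimate of \Cref{lemma:global_Lip_error_estimate} together with the triangle inequality $\|u-u_h^\star\|_{\mathcal T_h}\le \|u-\overline{u}_h^\star\|_{\mathcal T_h}+\|e_h^{u^\star}\|_{\mathcal T_h}$ and the HDG elliptic projection bounds yield $\|u_h^\star(t)-u(t)\|_{\mathcal T_h}\le C h^{k+2}$ (resp.\ $Ch^2$ in the $(k,\ell)=(1,0)$ case). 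Thus
\[
\|u_h^\star(t)-u(t)\|_{L^\infty(\Omega)}\le C\,h^{k+2-d/2}\quad(\text{resp.}\ Ch^{2-d/2}),
\]
which tends to $0$ exactly under the assumption $d\in[2,2k+4)$ (resp.\ $d\in[2,2k+2)$).

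Finally, I would close the bootstrap. For $h$ small enough the last display implies $\|u_h^\star(t)\|_{L^\infty}\le \|u\|_{L^\infty}+\delta/2<M$ for all $t\le t^\star$, so by continuity $t^\star$ cannot be strictly less than $T$, hence $t^\star=T$. The conclusion is then immediate from \Cref{lemma:global_Lip_error_estimate} applied on $[0,T]$. The main obstacle, and the only place where the hypothesis on $d$ is used, is the $L^\infty$ control of $u_h^\star-u$ through the inverse inequality; all other steps are straightforward adaptations of the global-Lipschitz analysis.
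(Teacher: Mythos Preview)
Your proposal is correct and follows essentially the same bootstrap/continuation argument as the paper: restrict to the maximal time interval on which $u_h^\star$ stays in $[-M,M]$, run the global-Lipschitz analysis there, convert the resulting $L^2$ rate to $L^\infty$ via an inverse inequality on the quasi-uniform mesh, and conclude that the interval is all of $[0,T]$.

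One technical point to tighten: your claim that $\|\Pi_{k+1}^\star u - u\|_{L^\infty} = O(h^{k+2-d/2})$ ``by standard approximation'' is not justified as written, since an inverse inequality does not apply to this term ($u$ is not piecewise polynomial) and the stated regularity $u\in C^1[0,T;H^{k+2}]$ does not give $W^{k+2,\infty}$ control. The paper handles this with a dedicated estimate (\Cref{infi}), proving $\|\Pi_{k+1}^\star u - u\|_{L^\infty(K)} \le C h_K\|\nabla u\|_{L^\infty(K)}$ via a scaling argument; this yields $\|\Pi_{k+1}^\star u - u\|_{L^\infty}\le \delta/2$ for $h$ small, which is all that is needed. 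With that lemma in hand, your argument and the paper's coincide.
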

	To prove this result, we are going to use the following auxiliary result. Its proof is in the Appendix.
	\begin{lemma} \label{infi} 
		We have
		\begin{align*}
		\| \Pi_{k+1}^{\star}u - u\|_{0,\infty,K}\le  C h_K\|\nabla u\|_{0,\infty,K}.%\label{pixin-infi}
		\end{align*}
	\end{lemma}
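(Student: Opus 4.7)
The plan is to combine three ingredients: constant preservation of the operator $\Pi_{k+1}^\star$, a scale-invariant $L^\infty$ bound for it, and a first-order Poincaré-type estimate. The proof structure mirrors the standard strategy for establishing interpolation-error bounds in $L^\infty$ for finite-element projections.

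First I would verify that $\Pi_{k+1}^\star$ reproduces constants. If $u\equiv c$ on $K$, then $\Pi^o_\ell u = c$ and $\Pi^\partial_k u = c$, and the constant function $p = c$ satisfies both of \eqref{pp1}--\eqref{pp2}: equation \eqref{pp2} is immediate, and the right-hand side of \eqref{pp1} equals $-(c,\Delta z_h)_K + \langle c,\bm n\cdot\nabla z_h\rangle_{\partial K}=0$ by integration by parts, matching the vanishing left-hand side. Well-posedness of the local system then forces $\mathfrak{p}_h^{k+1}(c,c)=c$, so $\Pi_{k+1}^\star c = c$.

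Next I would establish the uniform $L^\infty$ stability
\[
\|\Pi_{k+1}^\star u\|_{0,\infty,K} \le C\,\|u\|_{0,\infty,K},
\]
with $C$ independent of $h_K$. The idea is to pass to a shape-regular reference element via the affine map $F_K:\widehat K\to K$. Both $L^2$-projections $\Pi^o_\ell$ and $\Pi^\partial_k$ commute with $F_K$ in the obvious way, and the local system \eqref{pp1}--\eqref{pp2} is affine-covariant: in \eqref{pp1}, $\nabla$ contributes $h_K^{-1}$, $\Delta$ contributes $h_K^{-2}$, $dx$ contributes $h_K^d$, and $ds$ contributes $h_K^{d-1}$, so that all three terms acquire the common factor $h_K^{d-2}$ and cancel, yielding the analogous reference problem on $\widehat K$; \eqref{pp2} clearly scales the same way on both sides. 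On the finite-dimensional space $\mathcal P^{k+1}(\widehat K)$, the reference operator $\widehat\Pi_{k+1}^\star$ mapping $L^\infty(\widehat K)$ into itself is bounded by norm equivalence, and since $\|\cdot\|_{0,\infty}$ is invariant under affine pull-back, the bound transfers to $K$ with an $h_K$-independent constant.

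With these two ingredients, the conclusion is short. For any constant $c$, constant preservation gives $\Pi_{k+1}^\star(u-c) = \Pi_{k+1}^\star u - c$, so
\[
\|\Pi_{k+1}^\star u - u\|_{0,\infty,K} = \|\Pi_{k+1}^\star(u-c) - (u-c)\|_{0,\infty,K} \le (C+1)\,\|u-c\|_{0,\infty,K}.
\]
Choosing $c=u(x_0)$ for some $x_0\in\overline K$ and applying the fundamental theorem of calculus along straight segments inside $K$ (which is connected and of diameter at most $h_K$) yields $\|u-c\|_{0,\infty,K}\le h_K\|\nabla u\|_{0,\infty,K}$, completing the proof. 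The main obstacle is the second step: one must carefully verify that the local system \eqref{pp1}--\eqref{pp2} on $\widehat K$ is uniquely solvable with a data-to-solution map bounded by a constant depending only on the shape of $\widehat K$, so that the reference operator $\widehat\Pi_{k+1}^\star$ is genuinely a fixed bounded linear map; once this well-posedness is in place, finite-dimensionality takes over and the rest is routine.
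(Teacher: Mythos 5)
Your proof is correct, but it follows a genuinely different route from the paper's. The paper never invokes constant reproduction or $L^\infty$-stability of $\Pi_{k+1}^\star$; instead it splits $u-\Pi_{k+1}^\star u$ through the $L^2$-projection $\Pi_{k+1}^o u$, applies an inverse inequality to convert the $L^\infty$ norm of the polynomial difference $\Pi_{k+1}^\star u-\Pi_{k+1}^o u$ into an $L^2$ norm, and then uses the energy characterization of $\mathfrak{p}_h^{k+1}$ established in \Cref{lemma2:HHO_proj} (namely $\|\nabla(\Pi_{k+1}^\star u-\Pi_{k+1}^o u)\|_K\le\|\nabla(u-\Pi_{k+1}^o u)\|_K$ plus Poincar\'e on the mean-zero difference) before finishing with a scaling argument on the $H^1$-seminorm. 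Your argument is the classical Bramble--Hilbert-style ``stability plus polynomial reproduction'' scheme: it is more self-contained and norm-agnostic, and it makes transparent exactly which structural properties of $\Pi_{k+1}^\star$ are responsible for the estimate; in exchange, you must separately verify well-posedness of the local system \eqref{pp1}--\eqref{pp2} (taking $z_h$ equal to the difference of two solutions shows the gradient vanishes, and orthogonality to $\mathcal P^\ell(K)\ni 1$ kills the constant, so this is fine for $\ell\ge0$) and the $h$-uniformity of the $L^\infty$ stability constant under the affine scaling, which the paper gets for free from the energy estimate it has already proved for other purposes. Two minor caveats: your trace $\Pi_k^\partial u$ and the bound $\|u-u(x_0)\|_{0,\infty,K}\le h_K\|\nabla u\|_{0,\infty,K}$ implicitly use that $u\in W^{1,\infty}(K)$ is continuous up to $\overline K$ and that $K$ is convex or at least star-shaped (a straight segment between two points of a general polyhedral element can leave it), but the paper's own reference-element argument makes comparable shape-regularity assumptions, so this does not constitute a gap.
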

	
	\begin{proof}[Proof of \Cref{eqn:loc_Lip_main_estimate}]
		By \Cref{infi}, there is an $h_0$ such that for all $h\in (0,h_0]$ and for all $t\in[0,T]$, there holds
		\begin{align*}
		\|u-\Pi_{k+1}^{\star}u\|_{0,\infty,\mathcal{T}_h}&\le \frac{\delta}{2}.
		\end{align*}
		Therefore, $\Pi_{k+1}^{\star}u\in [-(M-\delta/2),(M-\delta/2)]$, and this implies that
		\begin{align*}
		\|F(u)-F(\Pi_{k+1}^{\star}u)\|_{\mathcal{T}_h}&\le L\|u-\Pi_{k+1}^{\star}u\|_{\mathcal{T}_h}.
		\end{align*}
		By an inverse inequality {and the assumption of quasiuniformity of the mesh,} we get
		% By \eqref{xin-0} and $d\in [2,2k+4)$, we have 
		\begin{align*}
		\|\Pi_{k+1}^\star u(0) -  {u}_h^\star(0)\|_{0,\infty,\mathcal T_h} \le h^{-d/2}\|\Pi_{k+1}^\star u(0) -  {u}_h^\star(0)\|_{\mathcal T_h}  \le C h^{-d/2}(h^{\ell+2 + \min\{\ell,1\}} + h^{k+2}),
		\end{align*}
		by  \Cref{super_con,error_0}.
		By the restrictions on $ d $, the upper bound of this error at time zero can be made strictly smaller than $ \delta/2 $ by taking $ h $ sufficiently small, say, for all $ h \in (0,h_0^\star] $, where $ h_0^\star \le h_0 $.
		
		Then, for each $h\in (0,h_0^\star]$ let $t_h\in (0,T]$ be the largest value such that for all $t\in[0,t_h]$ there holds
		\begin{align}
		\|\Pi_{k+1}^{\star}u- {u}_h^{\star}\|_{0,\infty,\mathcal{T}_h}\le \frac{\delta}{2}.\label{as2}
		\end{align}
		Therefore, $ {u}_h^{\star}\in [-M,M]$, and again we have
		\begin{align*}
		\|F(\Pi_{k+1}^{\star}u)-F( {u}_h^{\star})\|_{\mathcal{T}_h}\le L\|\Pi_{k+1}^{\star}u- {u}_h^{\star}\|_{\mathcal{T}_h}.
		\end{align*}
		Now the error estimate of   \Cref{eqn:loc_Lip_main_estimate} can be proved in exactly the same way as in \Cref{lemma:global_Lip_error_estimate}. However, the estimate now holds only for all $h\in (0,h_0^\star]$ and for all $t\in[0,t_h]$.

		By \Cref{super_con} and the error estimate, we have
		\begin{align*}
		\|\Pi_{k+1}^{\star}u(t_h)-{u}_h(t_h)\|_{\mathcal{T}_h}=\|e_h^{u}(t_h)\|_{\mathcal{T}_h}\le Ch^{\ell + 2+\min\{1,\ell\}} + Ch^{k+2}.
		\end{align*}
		By an inverse inequality we have
		\begin{align*}
		\|\Pi_{k+1}^{\star}u(t_h)-{u}_h(t_h)\|_{0,\infty,\mathcal{T}_h}
		\le C(h^{\ell + 2+\min\{1,\ell\}} + Ch^{k+2})h^{-\frac d 2}.
		\end{align*}
		As before, there exists $h_1 \in (0,h_0^\star]$ such that for all $h\in (0,h_1]$ there holds
		%	\begin{align*}
		%	C(h^{\ell + 2+\min\{1,\ell\}} + Ch^{k+2})h^{-\frac d 2} <\frac{\delta}{2}.
		%	\end{align*}
		%	Then for any $h\in(0,\min(h_1,h_2)]$, there holds
		\begin{align*}
		\|\Pi_{k+1}^{\star}u(t_h)-{u}_h(t_h)\|_{0,\infty,\mathcal{T}_h}<\frac{\delta}{2}.
		\end{align*}
		Since for each $ h \in (0,h_1] $ we have that $t_h \in (0,T]$ is the largest value such that \eqref{as2} holds for all $t\in[0,t_h]$, therefore $t_h=T$ for all $ h $ small enough.  This completes the proof.
		%then the desired results followed by  taking $t_h=T$ in \eqref{es2}.
	\end{proof}
	\subsection{Conclusion}
	\label{subsec:basic_projections}
	We can now conclude the proof of the main result. 
	To do that, we are going to need the following results. 
	
	The following error estimates for the $ L^2-$projections and the elementwise interpolation operator $ \mathcal I_h $ from \Cref{sec:HDG} are standard and can be found in \cite{MR2373954}.

	\begin{lemma}\label{lemmainter}
		Suppose $k, \ell \ge 0$. There exists a constant $C$ independent of $K\in\mathcal T_h$ such that
		%\begin{subequations}
			\begin{align*}
			&\|w - \mathcal  I_h w\|_K \le Ch^{k+2} |w|_{k+2,K}  &  &\forall \; w\in C(\bar K)\cap H^{k+2}(K), %\label{lemmainter_inter}
			\\
			&\|w - \Pi_{\ell}^o  w\|_K \le Ch^{\ell+1} |w|_{\ell+1,K}  &  &\forall \; w\in H^{\ell+1}(K), %\label{lemmainter_orthoo}
			\\
			&\|w- \Pi_k^\partial  w\|_{\partial K} \le Ch^{k+1/2} |w|_{k+1,K}  &  &\forall \; w\in H^{k+1}(K). %\label{lemmainter_orthoe}
			\end{align*}
		%\end{subequations}
	\end{lemma}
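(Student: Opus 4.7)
The three estimates are all standard approximation results; I would prove them by the classical route of mapping to a reference element $\widehat K$, applying the Bramble--Hilbert lemma, and scaling back. The first step is to fix an affine (or, for general polytopes, a bi-Lipschitz) map $F_K: \widehat K \to K$ and record the usual scaling bounds between $|\cdot|_{m,K}$ and $|\widehat{\cdot}|_{m,\widehat K}$, namely $|\widehat w|_{m,\widehat K} \le C h_K^{m-d/2}|w|_{m,K}$ and the converse bound with the inverse powers of $h_K$.

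For the Lagrange interpolation estimate, I would observe that the reference operator $\widehat{\mathcal I}$ is a bounded linear map on $C(\widehat K)\cap H^{k+2}(\widehat K)$ (since the nodal functionals are continuous by the Sobolev embedding $H^{k+2}\hookrightarrow C(\widehat K)$ for $d\le 2k+3$, which is fine for our range of $d$) that preserves all polynomials of degree at most $k+1$. The Bramble--Hilbert lemma then yields $\|\widehat w - \widehat{\mathcal I}\widehat w\|_{\widehat K} \le C |\widehat w|_{k+2,\widehat K}$; scaling back to $K$ produces the factor $h^{k+2}$.

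For the volumetric $L^2$-projection, the analogous fact is that $\widehat\Pi_\ell^o$ is the best $L^2(\widehat K)$-approximation in $\mathcal P^\ell(\widehat K)$, hence is bounded on $L^2(\widehat K)$ with operator norm one and preserves $\mathcal P^\ell(\widehat K)$. Bramble--Hilbert on $H^{\ell+1}(\widehat K)$ gives $\|\widehat w - \widehat\Pi_\ell^o \widehat w\|_{\widehat K}\le C |\widehat w|_{\ell+1,\widehat K}$, and the volumetric scaling relations convert this into the claimed $h^{\ell+1}$ bound. For the face-based projection $\Pi_k^\partial$, I would apply the same argument on a face $e\subset \partial K$, but combine it with the trace inequality $\|v\|_{\partial \widehat K} \le C \|v\|_{H^1(\widehat K)}$ (or, more directly, with the standard $H^{k+1}(K)\to L^2(\partial K)$ trace scaling that produces the half-power $h^{k+1/2}$); the Bramble--Hilbert argument again reduces everything to the seminorm $|\widehat w|_{k+1,\widehat K}$.

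The only place that requires care is the dependence of the constants on the shape of the elements: on the reference element everything is fixed, but the scaling arguments require that the family $\{\mathcal T_h\}$ be shape-regular (or, for the polytopal case of the HDG (ABC) framework, that the standard polytopal regularity assumptions of \cite{MR3507267} hold) so that the Jacobians of $F_K$ can be controlled uniformly. Since the paper is already working within that framework, I would simply invoke those regularity assumptions and cite \cite{MR2373954} for the detailed execution of the scaling estimates, which are entirely routine. The main pedagogical obstacle, and the only nontrivial input, is the continuity of the nodal interpolant required for the first estimate, which is why the hypothesis $w\in C(\overline K)\cap H^{k+2}(K)$ (and not merely $H^{k+2}$) is imposed.
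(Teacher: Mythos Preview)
Your proposal is correct and in fact goes well beyond what the paper does: the paper provides no proof at all for this lemma, simply stating that the estimates ``are standard and can be found in \cite{MR2373954}.'' Your Bramble--Hilbert/scaling sketch is exactly the content of that reference, so there is nothing to compare.
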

	
	%Take the partial derivative of \eqref{elliptic-projection} with respect to $t$, hence, $(\partial_t \overline{\bm q}_h,\partial_t\overline{u}_h,\partial_t\widehat{\overline u}_h)\in \bm V_h\times W_h\times M_h$ is the solution of
	%\begin{subequations}
	%	\begin{align}
		%
	%	(\partial_t\overline{\bm{q}}_h,\bm{r}_h)_{\mathcal{T}_h}-(\partial_t\overline{u}_h,\nabla\cdot \bm{r}_h)_{\mathcal{T}_h}+\langle\partial_t\widehat{\overline{u}}_h,\bm r_h\cdot\bm n \rangle_{\partial{\mathcal{T}_h}} &= 0, \label{pi3}\\
		%
	%	(\nabla\cdot\partial_t\overline{\bm{q}}_h, v_h)_{\mathcal{T}_h}
	%	-\langle \partial_t\overline{\bm q}_h\cdot\bm n,\widehat{v}_h \rangle_{\partial{\mathcal{T}_h}} \quad&
	%	\nonumber\\+\langle
	%	h^{-1}_K( \Pi^{\partial}_k\partial_t \overline{u}_h^\star -\partial_t\widehat{\overline{u}}_h),\Pi^{\partial}_kv_h^\star-\widehat{v}_h\rangle_{\partial{\mathcal{T}_h}} &=(f_t - \partial_{tt} u - F'(u)\partial_t u, v_h)_{\mathcal{T}_h},\label{pi4}
	%	\end{align}
	%\end{subequations}
	%for all $(\bm r_h,v_h,\widehat{v}_h)\in \bm V_h\times W_h\times M_h$.
We also need the following result. Its proof is given in \Cref{AppendixA}.
	\begin{theorem}\label{error_dual2}
		For any $t\in[0,T]$, we have the following error estimates
		%\begin{subequations}
			\begin{align*}
			\|\Pi_{k}^o \bm q - \overline{\bm q}_h\|_{\mathcal T_h}  &\le  C
			h\|\Pi_{\ell}^{o}(-\Delta  u) +\Delta  u\|_{\mathcal{T}_h}\nonumber\\
			&\quad+C(h^{1/2}\|\bm \Pi^o_k\bm{q}-\bm q\|_{\partial\mathcal{T}_h}+
			\|h_K^{-1/2}(\Pi_{k+1}^\star u -u)\|_{\partial{\mathcal{T}_h}}),\\
			\|\Pi_{\ell}^ou-\overline{u}_h\|_{\mathcal{T}_h}  &\le  C
			h^{1+\min\{1,\ell\}}\|\Pi_{\ell}^{o}(-\Delta  u)+\Delta  u\|_{\mathcal{T}_h}\nonumber\\
			&\quad+C(h^{1/2}\|\bm \Pi^o_k\bm{q}-\bm q\|_{\partial\mathcal{T}_h}+
			\|h_K^{-1/2}(\Pi_{k+1}^\star u -u)\|_{\partial{\mathcal{T}_h}}),\\
			\|\partial_t\Pi_{\ell}^ou-\partial_t\overline{u}_h\|_{\mathcal{T}_h} &\le Ch^{1+\min\{1,\ell\}}
			\|\Pi_{\ell}^{o}(-\Delta  u_t)+\Delta  u_t\|_{\mathcal{T}_h}\nonumber\\
			&\quad +C(h^{1/2}\|\bm \Pi^o_k\bm{q}_t-\bm q_t\|_{\partial\mathcal{T}_h}+
			\|h_K^{-1/2}(\Pi_{k+1}^\star u_t -u_t)\|_{\partial{\mathcal{T}_h}}).
			%\right).
			\end{align*}
		%\end{subequations}
	\end{theorem}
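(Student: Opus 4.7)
The plan is to follow the standard HDG elliptic projection analysis via energy and duality arguments, but adapted to the HHO-style stabilization $\langle h_K^{-1}(\Pi_k^\partial \overline u_h^\star-\widehat{\overline u}_h),\,\Pi_k^\partial v_h^\star-\widehat v_h\rangle_{\partial\mathcal T_h}$. First I introduce the projection errors
\[
\bm\varepsilon^{\bm q}:=\Pi_k^o\bm q-\overline{\bm q}_h,\quad
\varepsilon^u:=\Pi_\ell^o u-\overline u_h,\quad
\varepsilon^{\widehat u}:=\Pi_k^\partial u-\widehat{\overline u}_h,\quad
\varepsilon^{u^\star}:=\Pi_{k+1}^\star u-\overline u_h^\star,
\]
where by definition $\Pi_{k+1}^\star u=\mathfrak p_h^{k+1}(\Pi_\ell^o u,\Pi_k^\partial u)$ and $\overline u_h^\star=\mathfrak p_h^{k+1}(\overline u_h,\widehat{\overline u}_h)$, hence $\varepsilon^{u^\star}=\mathfrak p_h^{k+1}(\varepsilon^u,\varepsilon^{\widehat u})$. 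Substituting $(\bm q,u,u|_{\mathcal E_h})$ into the HDG (ABC) elliptic system \eqref{elliptic-projection}, using integration by parts and the definitions of the $L^2$-projections, and then subtracting the discrete equations, I obtain error equations of the form
\begin{align*}
(\bm\varepsilon^{\bm q},\bm r_h)_{\mathcal T_h}-(\varepsilon^u,\nabla\!\cdot\!\bm r_h)_{\mathcal T_h}+\langle\varepsilon^{\widehat u},\bm r_h\!\cdot\!\bm n\rangle_{\partial\mathcal T_h\setminus\partial\Omega}&=0,\\
(\nabla\!\cdot\!\bm\varepsilon^{\bm q},v_h)_{\mathcal T_h}-\langle\bm\varepsilon^{\bm q}\!\cdot\!\bm n,\widehat v_h\rangle_{\partial\mathcal T_h}+\langle h_K^{-1}(\Pi_k^\partial\varepsilon^{u^\star}-\varepsilon^{\widehat u}),\Pi_k^\partial v_h^\star-\widehat v_h\rangle_{\partial\mathcal T_h}&=\mathcal R(v_h,\widehat v_h),
\end{align*}
where the consistency residual $\mathcal R$ collects the terms $\langle(\bm\Pi_k^o\bm q-\bm q)\!\cdot\!\bm n,\cdot\rangle$ and $\langle h_K^{-1}(\Pi_k^\partial(\Pi_{k+1}^\star u-u)),\cdot\rangle$ that survive from replacing $u$ and $\bm q$ by their projections.

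Next, testing with $(\bm r_h,v_h,\widehat v_h)=(\bm\varepsilon^{\bm q},\varepsilon^u,\varepsilon^{\widehat u})$ and adding, the $(\nabla\!\cdot,\cdot)$ and interface terms telescope to leave
\[
\|\bm\varepsilon^{\bm q}\|_{\mathcal T_h}^2+\|h_K^{-1/2}(\Pi_k^\partial\varepsilon^{u^\star}-\varepsilon^{\widehat u})\|_{\partial\mathcal T_h}^2=\mathcal R(\varepsilon^u,\varepsilon^{\widehat u}),
\]
and a Cauchy--Schwarz/Young estimate on $\mathcal R$ produces the stated bound on $\|\Pi_k^o\bm q-\overline{\bm q}_h\|_{\mathcal T_h}$ together with control of the stabilization seminorm, in terms of $h\|\Pi_\ell^o(-\Delta u)+\Delta u\|_{\mathcal T_h}$, $h^{1/2}\|\bm\Pi_k^o\bm q-\bm q\|_{\partial\mathcal T_h}$, and $\|h_K^{-1/2}(\Pi_{k+1}^\star u-u)\|_{\partial\mathcal T_h}$; here the factor $h$ in the first term comes from $\|\Pi_\ell^o(-\Delta u)+\Delta u\|\cdot\|\varepsilon^u-\Pi_\ell^o\varepsilon^u\|\lesssim h\|\Pi_\ell^o(-\Delta u)+\Delta u\|\|\nabla\varepsilon^u\|$, then an inverse inequality absorbed into the left-hand side.

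For the $L^2(\Omega)$ estimate on $\varepsilon^u$ I run an Aubin--Nitsche duality argument with the dual problem \eqref{Dual_PDE}: take $g=\varepsilon^u$, set $(\bm r_h,v_h,\widehat v_h)=(\bm\Pi_k^o\bm\Phi,\Pi_\ell^o\Psi,\Pi_k^\partial\Psi)$ in the error equations, combine with the dual PDE tested against $\varepsilon^u$, and use the regularity estimate \eqref{regular} to obtain
\[
\|\varepsilon^u\|_{\mathcal T_h}^2\lesssim h^{\min\{1,\ell\}}\|\bm\Phi\|_1\,\big(\text{energy-norm errors already bounded}\big)\|g\|_0.
\]
The factor $h^{\min\{1,\ell\}}$ arises because for $\ell=0$ the projection error of $\Psi$ only gives an extra $h^1$, whereas for $\ell\ge1$ one gains an extra factor of $h$ from $\Pi_\ell^o\Psi-\Psi$. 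Dividing by $\|g\|_0$ yields the second estimate. The third estimate is obtained by formally differentiating the entire error system in $t$, noting that all operators are linear and time-independent, and then repeating the previous two steps with $u,\bm q$ replaced by $u_t,\bm q_t$; the initial-condition issue does not arise since we work with the stationary elliptic projection at each $t$.

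I expect the main obstacle to be the careful bookkeeping of the consistency residual $\mathcal R$, in particular isolating the boundary term $\langle h_K^{-1}\Pi_k^\partial(\Pi_{k+1}^\star u-u),\,\Pi_k^\partial v_h^\star-\widehat v_h\rangle_{\partial\mathcal T_h}$ that is specific to the HHO-style stabilization and showing via the $L^2$-stability of $\Pi_k^\partial$ that it contributes exactly the term $\|h_K^{-1/2}(\Pi_{k+1}^\star u-u)\|_{\partial\mathcal T_h}$ appearing in the statement; and, in the duality step, the extraction of the $h^{\min\{1,\ell\}}$ factor in the case $\ell=0$, which requires using the $H^2$-regularity of $\Psi$ against the lowest-order projection error while avoiding an illegal reliance on $\nabla\Psi\in H^1$ that would fail for $\ell=0$.
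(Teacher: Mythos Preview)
Your overall strategy (error equations, energy argument, duality) matches the paper, but there is a genuine gap in the energy step. You claim the factor $h$ in front of $\|\Pi_\ell^o(-\Delta u)+\Delta u\|_{\mathcal T_h}$ comes from $\|\varepsilon^u-\Pi_\ell^o\varepsilon^u\|$. But $\varepsilon^u=\Pi_\ell^o u-\overline u_h\in W_h\subset\mathcal P^\ell$, so $\Pi_\ell^o\varepsilon^u=\varepsilon^u$ and that quantity is identically zero; your bound produces nothing. The actual mechanism is that the volumetric consistency residual acts on the \emph{postprocessed} test function $v_h^\star$, not on $v_h$: when you test with $(v_h,\widehat v_h)=(\varepsilon^u,\varepsilon^{\widehat u})$ you obtain $v_h^\star=\varepsilon^{u^\star}\in\mathcal P^{k+1}$, and the term is
\[
\big((\mathbb I-\Pi_\ell^o)(-\Delta u),(\mathbb I-\Pi_\ell^o)\varepsilon^{u^\star}\big)_{\mathcal T_h}
\le Ch\,\|(\mathbb I-\Pi_\ell^o)(-\Delta u)\|_{\mathcal T_h}\,\|\nabla\varepsilon^{u^\star}\|_{\mathcal T_h}.
\]
Likewise the boundary term $\langle(\bm\Pi_k^o\bm q-\bm q)\!\cdot\!\bm n,\cdot\rangle$ is paired with $\varepsilon^{\widehat u}-\varepsilon^{u^\star}$ and is again controlled by $\|\nabla\varepsilon^{u^\star}\|_{\mathcal T_h}$ plus the stabilization seminorm.

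Thus the right-hand side of your energy identity is bounded by $\|\nabla\varepsilon^{u^\star}\|_{\mathcal T_h}$, which is \emph{not} on the left; your ``inverse inequality absorbed into the left-hand side'' does not close the argument. The paper fills this hole with a separate equivalence lemma,
\[
\|\bm\varepsilon^{\bm q}\|_{\mathcal T_h}\ \simeq\ \|\nabla\varepsilon^{u^\star}\|_{\mathcal T_h}
\quad\text{up to}\quad
\|h_K^{-1/2}(\Pi_k^\partial\varepsilon^{u^\star}-\varepsilon^{\widehat u})\|_{\partial\mathcal T_h},
\]
obtained by rewriting the first error equation as $(\bm\varepsilon^{\bm q}+\nabla\varepsilon^{u^\star},\bm r_h)_{\mathcal T_h}+\langle\varepsilon^{\widehat u}-\Pi_k^\partial\varepsilon^{u^\star},\bm r_h\!\cdot\!\bm n\rangle_{\partial\mathcal T_h}=0$ (using $\Pi_\ell^o\varepsilon^{u^\star}=\varepsilon^u$ from \eqref{pp2}) and testing with $\bm r_h=\bm\varepsilon^{\bm q}$ and $\bm r_h=\nabla\varepsilon^{u^\star}\in\bm V_h$. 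Only with this equivalence does the energy step close. The same issue resurfaces in the duality: the paper sets $g=\varepsilon^{u^\star}$, bounds $\|\varepsilon^{u^\star}\|_{\mathcal T_h}$ (the $h^{1+\min\{1,\ell\}}$ comes directly from $\|\Pi_{k+1}^\star\Psi-\Pi_\ell^o\Psi\|$, not from combining an energy $h$ with a duality $h^{\min\{1,\ell\}}$), and then concludes $\|\varepsilon^u\|_{\mathcal T_h}\le\|\varepsilon^{u^\star}\|_{\mathcal T_h}$ via \eqref{pp2}. Your choice $g=\varepsilon^u$ is workable, but the dual residual will still involve $\|\nabla\varepsilon^{u^\star}\|$, so the missing equivalence lemma is unavoidable either way.
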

	
	We are now ready to conclude the proof of our main result. Indeed, if the nonlinearity is globally Lipschitz,
	since $ \bm q - \bm q_h = \bm q - \bm \Pi_k^o \bm q + \bm \Pi_k^o \bm q  - \overline{\bm q}_h + \overline{\bm q}_h- \bm q_h$ and $u-u_h = u -  \Pi_k^o  u +  \Pi_k^o u  - \overline{u}_h + \overline{u}_h- u_h$, the convergence estimates for $\bm q - \bm q_h$ and $u - u_h$ in the main result follow from the triangle inequality, the estimates in \Cref{lemmainter}, \Cref{error_dual2}, and \Cref{lemma:global_Lip_error_estimate}. The {superconvergence} estimate for $ u - u_h^\star $ in the main result follows from the triangle inequality, \Cref{lemma1:HHO_proj},  \Cref{super_con}, and the estimates in \Cref{error_dual2} and \Cref{lemma:global_Lip_error_estimate}.
	
	If the nonlinearity is locally Lipschitz, the estimates of the main result in this case now follow from the above result in the same way.
	This concludes the proof of the main result, \Cref{main_err_qu}.
	
	\section{Numerical Results}
	\label{sec:numerics}

	%In this section, we present  an example to confirm the theoretical results of  the Interpolatory HDG (ABC) methods.  The domain is the unit square $\Omega = [0,1]\times [0,1]\subset \mathbb R^2$ in 2D.

	%\begin{example}[The Chaffee-Infante equation]
	We test the Chaffee-Infante equation with an exact solution to illustrate the convergence theory.  The domain is the unit square $\Omega = (0,1)\times (0,1)\subset \mathbb R^2$, the nonlinear term is $F( u) := u^3-u$, and the source term $f$ is chosen so that the exact solution is $u = \sin(t)\sin(\pi x)\sin(\pi y)$. The meshes are uniform and made of triangles. The Crank-Nicolson method is used for the time discretization. {The initial condition is the simple {$L^2$}-projection of $u_0$ into $W_h$.} For Interpolatory HDG (AB), the time step is chosen as $\Delta t = h$ when $k=0$ and $\Delta t   = h^2$ when $k=1$, where $k$ is the polynomial degree. We choose $\Delta t=h$ when $k=1$ and $\Delta t = h^2$ when $k=2$ for Interpolatory HDG (C). We report the errors at the final time $ T = 1 $ in \Cref{table_1}.  The observed convergence rates match the theory.
	\begin{table}%[H]
		\caption{History of convergence.}\label{table_1}
		\centering
		%%%%%%%%%%%%%%%%%%%%%%%%%%%%%%%55
		Errors for $\bm{q}_h$, $u_h$ and $u_h^\star$ of HDG (A){
			\begin{tabular}{c|c|c|c|c|c|c|c}
				\Xhline{1pt}

				\multirow{2}{*}{Degree}
				&\multirow{2}{*}{$\frac{h}{\sqrt{2}}$}	
				&\multicolumn{2}{c|}{$\|\bm{q}-\bm{q}_h\|_{0,\Omega}$}	
				&\multicolumn{2}{c|}{$\|u-u_h\|_{0,\Omega}$}	
				&\multicolumn{2}{c}{$\|u-u_h^\star\|_{0,\Omega}$}	\\
				\cline{3-8}
				& &Error &Rate
				&Error &Rate
				&Error &Rate
				\\
				\cline{1-8}
				\multirow{5}{*}{ $k=0$}
				&$2^{-1}$	&1.18		&	    &2.93E-01	&	    &2.93E-01		&\\
				&$2^{-2}$	&6.33E-01	&0.89 	&9.53E-02	&1.62 	&9.53E-02		&1.62\\
				&$2^{-3}$	&3.23E-01	&0.97 	&2.47E-02	&1.95 	&2.47E-02	&1.95\\
				&$2^{-4}$	&1.62E-01	&0.99 	&6.24E-03	&1.99 	&6.24E-03	&1.99\\
				&$2^{-5}$	&  8.12E-02	&0.98 	&1.56E-03	&2.00 	&1.56E-03		&2.00\\

				\cline{1-8}
				\multirow{5}{*}{$k=1$}
				
				&$2^{-1}$	& 3.39E-02	&	    &  8.81E-02	&	    & 8.81E-02	&\\
				&$2^{-2}$	&9.15E-03	&1.97 	&1.14E-02	&2.95 	&1.14E-02	&2.95\\
				&$2^{-3}$	&2.33E-02	&1.99 	&1.44E-03	&3.00 	&1.44E-03		&3.00\\
				&$2^{-4}$	&5.86E-03	&1.99 	&1.80E-04	&3.00 	&1.80E-04	&3.00\\
				&$2^{-5}$	&1.47E-03	&2.00 	&2.25E-05	&3.00 	&2.25E-05		&3.00\\
				
				\Xhline{1pt}

			\end{tabular}
		}
		\bigskip
		
		%%%%%%%%%%%%%%%%%%%%%%%%%%%%%%%55
		Errors for $\bm{q}_h$, $u_h$ and $u_h^\star$ of HDG (B){
			\begin{tabular}{c|c|c|c|c|c|c|c}
				\Xhline{1pt}

				\multirow{2}{*}{Degree}
				&\multirow{2}{*}{$\frac{h}{\sqrt{2}}$}	
				&\multicolumn{2}{c|}{$\|\bm{q}-\bm{q}_h\|_{0,\Omega}$}	
				&\multicolumn{2}{c|}{$\|u-u_h\|_{0,\Omega}$}	
				&\multicolumn{2}{c}{$\|u-u_h^\star\|_{0,\Omega}$}	\\
				\cline{3-8}
				& &Error &Rate
				&Error &Rate
				&Error &Rate
				\\
				\cline{1-8}
				\multirow{5}{*}{ $k=0$}
				&$2^{-1}$	&1.21		&	    &3.23E-01	&	    &2.41E-01	&\\
				&$2^{-2}$	&6.40E-01	&0.92	&1.41E-01	&1.20 	&6.47E-01	&1.90\\
				&$2^{-3}$	& 3.24E-01	&0.98	&6.68E-01	&1.08 	&1.66E-02	&1.97\\
				&$2^{-4}$	&1.62E-01	&1.00	& 3.29E-02	&1.02 	& 4.17E-03	&2.00\\
				&$2^{-5}$	&8.13E-02	&1.00 	&1.64E-02	&1.00 	& 1.04E-03	&2.00\\

				\cline{1-8}
				\multirow{5}{*}{$k=1$}
				&$2^{-1}$	&3.41E-01	& 	    &9.33E-01	& 	    &6.31E-02	&\\
				&$2^{-2}$	&9.02E-02	&1.90	&2.12E-02	&2.14 	&9.05E-03	&2.80\\
				&$2^{-3}$	&2.28E-02	&1.98 	&5.07E-02	&2.07 	& 1.16E-03	&2.96\\
				&$2^{-4}$	&5.73E-03	&2.00 	& 1.25E-03	&2.02	&1.46E-04	&2.99\\
				&$2^{-5}$	&1.43E-03	&2.00 	&3.11E-04	&2.00 	&1.83E-05	&3.00\\
				
				\Xhline{1pt}

			\end{tabular}
		}
		
		\bigskip
		
		%%%%%%%%%%%%%%%%%%%%%%%%%%%%%%%55
		%%%%%%%%%%%%%%%%%%%%%%%%%%%%%%%55
		Errors for $\bm{q}_h$, $u_h$ and $u_h^\star$ of HDG (C){
			\begin{tabular}{c|c|c|c|c|c|c|c}
				\Xhline{1pt}

				\multirow{2}{*}{Degree}
				&\multirow{2}{*}{$\frac{h}{\sqrt{2}}$}	
				&\multicolumn{2}{c|}{$\|\bm{q}-\bm{q}_h\|_{0,\Omega}$}	
				&\multicolumn{2}{c|}{$\|u-u_h\|_{0,\Omega}$}	
				&\multicolumn{2}{c}{$\|u-u_h^\star\|_{0,\Omega}$}	\\
				\cline{3-8}
				& &Error &Rate
				&Error &Rate
				&Error &Rate
				\\
				\cline{1-8}
				\multirow{5}{*}{ $k=1$}
				&$2^{-1}$	&6.28E-01	&		&2.58E-01	&	    &1.16E-01		&\\
				&$2^{-2}$	&1.78E-01	&1.82 	&1.32E-01	&0.97	&3.20E-02		&1.86\\
				&$2^{-3}$	&4.58E-02	&1.96 	&6.56E-02	&1.00 	&8.24E-02		&1.96\\
				&$2^{-4}$	&1.15E-02	&1.99 	&3.28E-02	&1.00 	&2.07E-03		&1.99\\
				&$2^{-5}$	& 2.89E-03	&2.00 	&1.64E-02	&1.00 	&5.20E-04		&2.00\\

				\cline{1-8}
				\multirow{5}{*}{$k=2$}
				
				&$2^{-1}$	& 1.06E-01	&	    &7.39E-02	&	    & 1.27E-02	&\\
				&$2^{-2}$	&1.44E-02	&2.88 	&1.95E-02	&1.92 	&9.39E-04	&3.76\\
				&$2^{-3}$	&1.85E-03	&2.96 	&4.95E-03	&1.98 	&6.18E-05	&3.92\\
				&$2^{-4}$	&2.33E-04	&2.99 	&1.24E-03	&1.99 	&3.92E-06	&3.98\\
				&$2^{-5}$	&2.93E-05	&3.00 	&3.11E-04	&2.00 	&2.47E-07	&4.00\\

				\Xhline{1pt}

			\end{tabular}
		}
	\end{table}

	\section{Conclusion}
	
	In \cite{ChenCockburnSinglerZhang1},  we proposed a superconvergent Interpolatory HDG  method to  approximate  the solution of nonlinear reaction diffusion PDEs. The new  method uses a postprocessing procedure along with an  interpolation operator  to evaluate the nonlinear term.  This simple change recovers  the superconvergence  that was  lost  in our earlier Interpolatory HDG work \cite{CockburnSinglerZhang1}. Furthermore,  this method retains  the computational advantages of our Interpolatory HDG method from  \cite{CockburnSinglerZhang1}.
	
	We extended the idea developed previously and devised superconvergent Interpolatory HDG methods inspired by hybrid high-order methods \cite{MR3507267}. We proved that the interpolatory procedure does not reduce the convergence rate. %Moreover, some of the  methods  superconverge for all polynomial degrees $k\ge 0$, while we only obtained  superconvergence rate for $k\ge 1$ in Part I. 
	
	The devising of superconvergent HDG methods for equations with the more general nonlinear 
	term $F(\nabla u, u)$ constitutes a subject of ongoing work.

	\appendix

	\section{Approximation estimates of auxiliary projections}
		\label{HHOProjections}

	\subsection{Proof of  \Cref{infi}}
	Here we prove the estimate for $\Pi_{k+1}^\star u- u$ in \Cref{infi}.
	
	We are going to use the following auxiliary result.
	
	\begin{lemma}\label{lemma2:HHO_proj} For any $K\in\mathcal{T}_h$, we have
		\begin{eqnarray*}
		\|\Pi_{k+1}^\star u-u\|_{0,K}
		\le C\left( h_K\|\nabla u-\nabla\Pi^{o}_{k+1}u\|_{K}+\| u-\Pi^{o}_{k+1}u\|_{K}\right).
		%\label{es_Pixin}
		\end{eqnarray*}
	\end{lemma}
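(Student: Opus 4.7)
The plan is to bound $\Pi_{k+1}^\star u - u$ by comparing $\Pi_{k+1}^\star u$ to the standard $L^2$-projection $\Pi^o_{k+1} u$, and exploiting the fact that the difference $w := \Pi_{k+1}^\star u - \Pi^o_{k+1} u$ is a polynomial in $\mathcal{P}^{k+1}(K)$ that is orthogonal to $\mathcal{P}^\ell(K)$.  This orthogonality allows passage from an $H^1$-seminorm estimate to an $L^2$-norm estimate via a Poincaré--Wirtinger inequality on polynomials, and the $H^1$-seminorm estimate itself will come directly from the defining equation \eqref{pp1}.

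First, I would verify that $w \in [\mathcal{P}^{k+1}_\ell(K)]^\perp$.  For any $v \in \mathcal{P}^\ell(K)$, constraint \eqref{pp2} gives $(\Pi_{k+1}^\star u, v)_K = (\Pi^o_\ell u, v)_K = (u, v)_K$, and, since $\ell \le k+1$ for all three methods, also $(\Pi^o_{k+1} u, v)_K = (u, v)_K$, so $(w, v)_K = 0$.  Next, for any $z_h \in [\mathcal{P}^{k+1}_\ell(K)]^\perp$, one checks that $\Delta z_h \in \mathcal{P}^{k-1}(K) \subset \mathcal{P}^\ell(K)$ (which holds because $\ell \ge k-1$ in each of (A), (B), (C)) and that $\bm n \cdot \nabla z_h|_e \in \mathcal{P}^k(e)$ on each face $e$.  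These degree counts let us drop the projections $\Pi^o_\ell$ and $\Pi^\partial_k$ on the right-hand side of \eqref{pp1}, and an integration by parts yields
\[
(\nabla \Pi_{k+1}^\star u, \nabla z_h)_K = -(u, \Delta z_h)_K + \langle u, \bm n \cdot \nabla z_h \rangle_{\partial K} = (\nabla u, \nabla z_h)_K.
\]
Subtracting $(\nabla \Pi^o_{k+1} u, \nabla z_h)_K$ from both sides and taking $z_h = w$ (admissible by the orthogonality just established) gives $\|\nabla w\|_K \le \|\nabla u - \nabla \Pi^o_{k+1} u\|_K$.  Finally, since $\mathcal{P}^0(K) \subseteq \mathcal{P}^\ell(K)$, the function $w$ has zero mean on $K$, so a standard Poincaré--Wirtinger inequality on the shape-regular element gives $\|w\|_K \le C h_K \|\nabla w\|_K$, and the desired bound follows from the triangle inequality applied to $\Pi_{k+1}^\star u - u = w + (\Pi^o_{k+1} u - u)$.

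The only delicate point is the degree-counting step that lets us drop the projections in the identity derived from \eqref{pp1}; it must be checked separately for the three cases $\ell \in \{k+1, k, k-1\}$, but is immediate in each.  Everything else reduces to a standard polynomial Poincaré estimate on a shape-regular element together with a triangle inequality, so no substantial obstacle is anticipated.
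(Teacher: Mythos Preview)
Your proposal is correct and follows essentially the same route as the paper's proof: both compare $\Pi_{k+1}^\star u$ to $\Pi^o_{k+1}u$, use the orthogonality from \eqref{pp2} to place the difference in $[\mathcal{P}^{k+1}_\ell(K)]^\perp$, derive $(\nabla\Pi_{k+1}^\star u,\nabla z_h)_K=(\nabla u,\nabla z_h)_K$ from \eqref{pp1}, test with $z_h=w$, and finish with Poincar\'e and the triangle inequality. Your write-up is in fact slightly more explicit than the paper's in justifying why the projections $\Pi^o_\ell$ and $\Pi^\partial_k$ can be dropped (the degree-counting step), which the paper passes over silently.
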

	\begin{proof} By definitions \eqref{pixin} and \eqref{post}, we obtain
		%\begin{subequations}
			\begin{align*}
			(\nabla\Pi_{k+1}^\star u,\nabla z_h)_K&=-(\Pi^{o}_\ell u,\Delta z_h)_K+\langle\Pi^{\partial}_k u,\bm n\cdot\nabla z_h \rangle_{\partial K},\\
			(\Pi_{k+1}^\star u,w_h)_K&=(\Pi^{o}_\ell u,w_h)_K,
			\end{align*}
		%\end{subequations}
		for all $(z_h,w_h)\in [\mathcal {P_{\ell}^{k+1}(K)]^\perp} \times \mathcal{P}^{\ell}(K) $. This leads to
		%\begin{subequations}
			\begin{align*}
			(\nabla\Pi_{k+1}^\star u,\nabla z_h)_K&=(\nabla u,\nabla z_h)_K,%\label{or1}
			\\
			(\Pi_{k+1}^\star u,w_h)_K&=(\Pi^{o}_{k+1} u,w_h)_K.%\label{or2}
			\end{align*}
		%\end{subequations}
		The last equation implies that  {$\Pi_{k+1}^\star u-\Pi^{o}_{k+1}u\in  [\mathcal P_{\ell}^{k+1}(K)]^{\perp}$} and so, we can then take
		$z_h:=\Pi_{k+1}^\star u-\Pi^{o}_{k+1}u$ in the first equation to get		\begin{align*}
		\|\nabla\Pi_{k+1}^\star u-\nabla\Pi^{o}_{k+1}u\|^2_{K}=(\nabla\Pi_{k+1}^\star u-\nabla\Pi^{o}_{k+1}u, \nabla u-\nabla\Pi^{o}_{k+1}u)_K,
		\end{align*}
		and
		\begin{align*}
		\|\nabla\Pi_{k+1}^\star u-\nabla\Pi^{o}_{k+1}u\|_{K}\le\|\nabla u-\nabla\Pi^{o}_{k+1}u\|_{K}.
		\end{align*}
		Since {$\Pi_{k+1}^\star u-\Pi^{o}_{k+1}u\in  [\mathcal P_{\ell}^{k+1}(K)]^{\perp}$}, we have
		\begin{align*}
		(\Pi_{k+1}^\star u-\Pi^{o}_{k+1}u,1)_K=0,
		\end{align*}
		and using Poincar\'e's inequality, we obtain
		\begin{align*}%\label{last}
		\|\Pi_{k+1}^\star u-\Pi^{o}_{k+1}u\|_{K} \le C h_K\|\nabla\Pi_{k+1}^\star u-\nabla\Pi^{o}_{k+1}u\|_{K} \le C h_K\|\nabla u-\nabla\Pi^{o}_{k+1}u\|_{K}.
		\end{align*}
		Then the estimate follows by applying the triangle inequality.	{This completes the proof.  } \end{proof}
	
	We are now ready to prove  \Cref{infi}.
	Using inverse inequalities, Poincar\'e's inequality, and the approximation properties for $\Pi_{k+1}^o$, one gets
		\begin{align*}
		\| u-\Pi_{k+1}^{\star}u\|_{0,\infty,K}&\le
		\| \Pi_{k+1}^{\star}u-\Pi_{k+1}^o u\|_{0,\infty,K}
		+ C\|\Pi_{k+1}^o u-u\|_{0,\infty,K}
		\nonumber\\
		&\le
		Ch_K^{-d/2}\| \Pi_{k+1}^{\star}u-\Pi_{k+1}^o u\|_{0,K}
		+C h_K\|\nabla u\|_{0,\infty,K}
		\nonumber\\
		%	&\le
		%	Ch_K^{1-d/2}\| \nabla(\Pi_{k+1}^{\star}u-\Pi_{k+1}^o u)\|_{0,K}
		%	+ Ch_K\|\nabla u\|_{0,\infty,K}
		%	\nonumber\\
		&\le
		Ch_K^{1-d/2}| u-\Pi_{k+1}^o u|_{1,K}
		+C h_K\|\nabla u\|_{0,\infty,K}
		\nonumber\\
		&\le Ch_K^{1-d/2} h_K^{d/2-1} |\widehat{u}-\widehat{\Pi}_{k+1}^{o}\widehat{u}|_{1,\widehat{K}} + C h_K\|\nabla u\|_{0,\infty,K}\\
		&
		\le C|\widehat{u}|_{1,\widehat{K}} + C h_K\|\nabla u\|_{0,\infty,K}\\
		&
		\le C|\widehat{u}|_{1,\infty,\widehat{K}} + C h_K\|\nabla u\|_{0,\infty,K}\\
		&
		\le Ch_K|{u}|_{1,\infty,{K}} + C h_K\|\nabla u\|_{0,\infty,K}.
		\end{align*}
		Here, we used a standard scaling argument and $\widehat K$ is the reference element.
	{This completes the proof of  \Cref{infi}.  }

\subsection{Proof of  \Cref{super_con}}	
Here, we prove the estimate for $\Pi_{k+1}^\star u - u_h^\star$ in \Cref{super_con}.

	%Using $u_h - \Pi_{\ell}^o u =u_h  - \overline u_h + \overline u_h - \Pi_{\ell}^o u $, $\bm q_h - \Pi_{k}^o \bm q =\bm q_h  - \overline {\bm q}_h + \overline {\bm q}_h  - \Pi_{k}^o \bm q $, Lemma \ref{colorray_ustar} follows immediately from this result.

	%Let us prove Lemma \ref{super_con}.

		Let {$z_h\in [\mathcal{P}_{\ell}^{k+1}(K)]^{\perp}$} and take $\bm r_h=\nabla z_h$ in the first equation of  \Cref{lemma1:HHO_proj} to get
		\begin{align*}
		(\bm{q}_h,\nabla z_h)-(u_h,\Delta  z_h)_{\mathcal{T}_h}+\left\langle\widehat{u}_h,\nabla z_h\cdot\bm n \right\rangle_{\partial{\mathcal{T}_h}} &= 0.%\label{ho}
		\end{align*}
		Combined with \eqref{pp1} one gets
		\begin{align*}
		(\nabla u_h^{\star},\nabla z_h)=-(\bm{q}_h,\nabla z_h) \quad \forall \; z_h\in [\mathcal{P}_{\ell}^{k+1}(K)]^{\perp}.%\label{equal}
		\end{align*}
		By the definition of $\Pi_{k+1}^{\star}$, as in the proof of \Cref{lemma1:HHO_proj} one gets
		\begin{align*}
		( \nabla \Pi_{k+1}^{\star}u ,\nabla z_{h} )_K&=-(\Pi^{o}_\ell u,\Delta z_h)_K+\langle\Pi^{\partial}_k u,\bm n\cdot\nabla z_h \rangle_{\partial K}=(\nabla u,\nabla z_h)_K.%\label{equal_2}
		\end{align*}
		Let $e_h=u_h^\star - u_h+\Pi_{\ell}^o u-\Pi_{k+1}^{\star} u$, and then {$e_h\in [\mathcal{P}_{\ell}^{k+1}(K)]^{\perp}$}.  By the two previous equations, $ \bm q = -\nabla u $, and an inverse inequality we have
		\begin{align*}
		\|\nabla e_{h}\|_K^2&=(\nabla (u_h^\star - u_h),\nabla e_{h} )_K+( \nabla (\Pi_{\ell}^o u-\Pi_{k+1}^{\star} u),\nabla e_{h} )_K \nonumber\\
		&=(-\bm{q}_h-\nabla u_h,\nabla e_{h} )_K+(  \nabla (\Pi_{\ell}^o u-u),\nabla e_{h} )_K \nonumber\\
		&=((\bm{q}-\bm{\Pi}^o_k\bm{q})-(\bm{q}_h-\bm{\Pi}^o_k\bm{q})  +\nabla (\Pi_{\ell}^o u-u_h),\nabla e_{h})_K \nonumber\\
		&\le C (h_K^{-1}\|u_h-\Pi_{\ell}^o u\|_K+\|\bm{q}_h-\bm{\Pi}^o_k\bm{q}\|_K + \|\bm{q}-\bm{\Pi}^o_k\bm{q}\|_K )\|\nabla e_{h}\|_K.%\label{H1}
		\end{align*}
		Since $(e_h,1)_K=0$,  we can now apply the Poincar\'{e} inequality to get 
		\begin{align*}
		\|e_{h}\|_K \le C h_K \|\nabla e_h\|_K \le C (\|u_h-\Pi_{\ell}^o u\|_K+h_K\|\bm{q}_h-\bm{\Pi}^o_k\bm{q}\|_K + h_K\|\bm{q}-\bm{\Pi}^o_k\bm{q}\|_K).
		\end{align*}
		This means
		\begin{align*}
		\|e_{h} \|_{\mathcal T_h} \le C( \|u_h-\Pi_{\ell}^o u\|_{\mathcal T_h}+h\|\bm{q}_h-\bm{\Pi}^o_k\bm{q}\|_{\mathcal T_h} +h\|\bm{q}-\bm{\Pi}^o_k\bm{q}\|_{\mathcal T_h} ).
		\end{align*}
		Hence, we have
		\begin{align*}
		\|\Pi_{k+1}^{\star} u - u_h^\star\|_{\mathcal T_h}&\le\|\Pi_{k+1}^{\star} u -\Pi^{o}_{\ell}  u- u_h^\star + u_h\|_{\mathcal T_h} + \|\Pi^{o}_{\ell} u-u_h\|_{\mathcal T_h} \nonumber\\
		&  \le C ( \|u_h-\Pi_{\ell}^o u\|_{\mathcal T_h}+h\|\bm{q}_h-\bm{\Pi}^o_k\bm{q}\|_{\mathcal T_h} +h\|\bm{q}-\bm{\Pi}^o_k\bm{q}\|_{\mathcal T_h} ).
		\end{align*}
	{This completes the proof of  \Cref{super_con}.  }
	
	\section{Proof of  \Cref{error_dual2}}\label{AppendixA}
	This appendix is devoted to the proof of
	the approximation estimates of  
	\Cref{error_dual2}.
	We only give the proofs of the estimates for $\|\bm\Pi_{k}^o \bm q - \overline{\bm q}_h\|_{\mathcal T_h}$ { and  $\|\Pi_{\ell}^ou-\overline{u}_h\|_{\mathcal{T}_h}$.}
	%, and  $\| \Pi_{k+1}^\star u - {\overline u_h}^\star\|_{\mathcal{T}_h}$.
	The proof  of the estimate for $\|\partial_t\Pi_{\ell}^ou-\partial_t\overline{u}_h\|_{\mathcal{T}_h}$ is very similar and is omitted. We use the notation
	\[
	\varepsilon_h^{\bm q}=\bm{\Pi}_{k}^o\bm q-\overline{\bm q}_h , 
	\quad\varepsilon_h^{ u}=\Pi_{\ell}^o u-\overline{u}_h,
	\quad
	\varepsilon_h^{\widehat{u}}=\Pi_{k}^{\partial}u-\widehat{\overline{u}}_h,
	\quad
	\text{ and }
	\quad
	\varepsilon_h^{ u^\star}=\Pi_{k+1}^\star u-\overline{u}_h^\star,
	\]
	and split the proof  into four steps.
	
	\subsection*{Step 1: Equations for the projections of the errors}
	\begin{lemma}\label{error_u}
		For all $(\bm r_h,v_h,\widehat v_h)\in \bm V_h\times W_h\times M_h$, we have
%		\begin{subequations}\label{error3}
			\begin{align*}
			(\varepsilon_h^{\bm q},\bm{r}_h)_{\mathcal{T}_h}-(\varepsilon_h^{u},\nabla\cdot \bm{r}_h)_{\mathcal{T}_h}+\langle\varepsilon_h^{\widehat{u}},\bm r_h\cdot\bm n\rangle_{\partial{\mathcal{T}_h}} &= 0,%\label{error2_a} 
			\\
			(\nabla\cdot\varepsilon_h^{\bm q}, v_h)_{\mathcal{T}_h}
			-\langle \varepsilon_h^{\bm q}\cdot\bm n,\widehat{v}_h \rangle_{\partial{\mathcal{T}_h}}
			+\langle
			h_K^{-1}( \Pi^{\partial}_k \varepsilon_h^{u^\star} -\varepsilon_h^{\widehat{u}}),\Pi^{\partial}_kv_h^\star-\widehat{v}_h\rangle_{\partial{\mathcal{T}_h}} &= RHS_h,
			\end{align*}
			where
			\begin{alignat*}{1}
			RHS_h:=&\;((\mathbb I-\Pi_{\ell}^o)(-\Delta u),(\mathbb {I}-\Pi_{\ell}^o)v_h^\star)
			+E_h(\bm q,u;v_h,\widehat{v}_h),%\label{error2_b}
			\\
			E_h(\bm q,u;v_h,\widehat{v}_h) 
		:=&-\langle( \bm \Pi^o_k\bm{q}-\bm q)\cdot\bm n,\widehat{v}_h-v_h^\star \rangle_{\partial{\mathcal{T}_h}} +\langle
		h_K^{-1}( \Pi_{k+1}^\star u -u),\Pi^{\partial}_k v_h^\star -\widehat{v}_h\rangle_{\partial{\mathcal{T}_h}},
			\end{alignat*}
	%	\end{subequations}
		 and $\mathbb I$ is the identity operator.
	\end{lemma}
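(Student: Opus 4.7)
The plan is to derive the error equations by showing that the exact projected quantities $(\bm\Pi_k^o\bm q, \Pi_\ell^o u, \Pi_k^\partial u, \Pi_{k+1}^\star u)$ satisfy a perturbed version of the HDG (ABC) elliptic projection system \eqref{elliptic-projection}, with $RHS_h$ playing the role of the consistency error, and then subtract the two systems. Since the continuous solution satisfies $\bm q=-\nabla u$ and $-\Delta u=\nabla\cdot\bm q$, the proof is essentially a careful bookkeeping of integration by parts (IBP) combined with the defining orthogonality properties of $\Pi_\ell^o$, $\Pi_k^\partial$, $\bm\Pi_k^o$, and the postprocessing $\mathfrak p_h^{k+1}$.

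\textbf{First equation.} This is the easy part. I would test \eqref{pi1} against $\bm r_h\in\bm V_h$ and insert the projections by observing that $\nabla\cdot\bm r_h\in\mathcal P^{k-1}(K)\subset\mathcal P^\ell(K)$ (which holds for all three choices $\ell\in\{k-1,k,k+1\}$) and $\bm r_h\cdot\bm n|_e\in\mathcal P^k(e)$. Hence the definitions of the $L^2$-projections give $(\Pi_\ell^o u,\nabla\cdot\bm r_h)=(u,\nabla\cdot\bm r_h)$ and $\langle \Pi_k^\partial u,\bm r_h\cdot\bm n\rangle=\langle u,\bm r_h\cdot\bm n\rangle$ (this last identity on $\partial\mathcal T_h\setminus\partial\Omega$ suffices, since $u|_{\partial\Omega}=0$). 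An elementwise IBP combined with $\bm q=-\nabla u$ then shows that $(\bm\Pi_k^o\bm q,\bm r_h)-(\Pi_\ell^o u,\nabla\cdot\bm r_h)+\langle\Pi_k^\partial u,\bm r_h\cdot\bm n\rangle_{\partial\mathcal T_h\setminus\partial\Omega}=0$. Subtracting \eqref{pi1} yields the first error equation.

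\textbf{Second equation (the main step).} For the stabilization term, I would first use $\Pi_k^\partial\Pi_{k+1}^\star u-\Pi_k^\partial u=\Pi_k^\partial(\Pi_{k+1}^\star u - u)$ and strip the outer $\Pi_k^\partial$ against the test $\Pi_k^\partial v_h^\star-\widehat v_h\in\mathcal P^k(e)$; this immediately produces the second term of $E_h$. For the volume/normal-flux terms, I would IBP $(\nabla\cdot\bm\Pi_k^o\bm q,v_h)_K$, use the $L^2$-projection property $(\bm\Pi_k^o\bm q,\nabla v_h)=(\bm q,\nabla v_h)$ (valid since $\nabla v_h\in[\mathcal P^{\ell-1}]^d\subset[\mathcal P^k]^d$), IBP again using $\nabla\cdot\bm q=-\Delta u$, and finally use $\langle\bm q\cdot\bm n,\widehat v_h\rangle_{\partial\mathcal T_h}=0$ (continuity of the normal trace of $\bm q$ on interior faces plus $\widehat v_h|_{\partial\Omega}=0$). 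The outcome is
\[
(\nabla\cdot\bm\Pi_k^o\bm q,v_h)_{\mathcal T_h}-\langle\bm\Pi_k^o\bm q\cdot\bm n,\widehat v_h\rangle_{\partial\mathcal T_h}=(-\Delta u,v_h)_{\mathcal T_h}+\langle(\bm\Pi_k^o\bm q-\bm q)\cdot\bm n,v_h-\widehat v_h\rangle_{\partial\mathcal T_h}.
\]

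\textbf{Closing the proof.} The crucial HHO-flavored identity is that \eqref{pp2} implies $\Pi_\ell^o v_h^\star=v_h$, so $v_h-\widehat v_h=-(\widehat v_h-v_h^\star)-(\mathbb I-\Pi_\ell^o)v_h^\star$. The first piece provides the first term of $E_h$. The hard part is showing that the boundary residual $-\langle(\bm\Pi_k^o\bm q-\bm q)\cdot\bm n,(\mathbb I-\Pi_\ell^o)v_h^\star\rangle_{\partial\mathcal T_h}$ equals the volume term $((\mathbb I-\Pi_\ell^o)(-\Delta u),(\mathbb I-\Pi_\ell^o)v_h^\star)_{\mathcal T_h}$. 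I would establish this by IBP on each $K$: the gradient piece $(\bm\Pi_k^o\bm q-\bm q,\nabla(\mathbb I-\Pi_\ell^o)v_h^\star)_K$ vanishes because $\nabla(\mathbb I-\Pi_\ell^o)v_h^\star\in[\mathcal P^k]^d$, the divergence piece $(\nabla\cdot\bm\Pi_k^o\bm q,(\mathbb I-\Pi_\ell^o)v_h^\star)_K$ vanishes because $\nabla\cdot\bm\Pi_k^o\bm q\in\mathcal P^{k-1}\subset\mathcal P^\ell$ is orthogonal to $(\mathbb I-\Pi_\ell^o)v_h^\star$, and the remaining $(-\nabla\cdot\bm q,(\mathbb I-\Pi_\ell^o)v_h^\star)_K=(\Delta u,(\mathbb I-\Pi_\ell^o)v_h^\star)_K$ produces the target volume term after using self-adjointness and idempotency of $\Pi_\ell^o$. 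Subtracting \eqref{pi2} then gives the second error equation. The main subtlety, and the step most sensitive to the range of $\ell$, is precisely this divergence-vs-$\mathcal P^\ell$ orthogonality, which is why the inclusion $\mathcal P^{k-1}\subset\mathcal P^\ell$ (tight for HDG (C)) is essential throughout the argument.
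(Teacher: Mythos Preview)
Your proposal is correct and uses the same ingredients as the paper (the postprocessing identity $\Pi_\ell^o v_h^\star=v_h$, the inclusion $\mathcal P^{k-1}\subset\mathcal P^\ell$, the $L^2$-projection orthogonalities, and elementwise integration by parts), but the organization differs slightly. The paper bypasses your ``Closing the proof'' step: instead of integrating by parts against $v_h$ and then splitting $v_h-\widehat v_h$, it first swaps $v_h$ for $v_h^\star$ in the volume term via $(\nabla\cdot\bm\Pi_k^o\bm q,v_h)_{\mathcal T_h}=(\nabla\cdot\bm\Pi_k^o\bm q,v_h^\star)_{\mathcal T_h}$ (valid precisely because $\nabla\cdot\bm\Pi_k^o\bm q\in\mathcal P^{k-1}\subset\mathcal P^\ell$ and $\Pi_\ell^o v_h^\star=v_h$), then integrates by parts against $v_h^\star$. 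This yields directly $(-\Delta u,v_h^\star)_{\mathcal T_h}-\langle(\bm\Pi_k^o\bm q-\bm q)\cdot\bm n,\widehat v_h-v_h^\star\rangle_{\partial\mathcal T_h}$, so the $E_h$ term appears with $v_h^\star$ from the start, and the volume residual $((\mathbb I-\Pi_\ell^o)(-\Delta u),(\mathbb I-\Pi_\ell^o)v_h^\star)$ drops out of the subtraction $(-\Delta u,v_h^\star)-(-\Delta u,v_h)$ without the extra IBP you carry out. Your route is a bit longer but equally valid, and your closing remark about the tightness of $\mathcal P^{k-1}\subset\mathcal P^\ell$ for HDG~(C) correctly identifies the same structural constraint the paper exploits.
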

	
	\begin{proof}
		We begin by noting that, by the properties of $\bm{\Pi}_k^o$, $\Pi_{\ell}^o$, and $\Pi_k^\partial$, we have
		\begin{align*}
		(\bm \Pi^o_k\bm{q},\bm{r}_h)_{\mathcal{T}_h}-(\Pi^o_{\ell} u,\nabla\cdot \bm{r}_h)_{\mathcal{T}_h}+\langle\Pi^{\partial}_{k}u,\bm r_h\cdot\bm n \rangle_{\partial{\mathcal{T}_h}}
		=(\bm{q},\bm{r}_h)_{\mathcal{T}_h}-( u,\nabla\cdot \bm{r}_h)_{\mathcal{T}_h}+\left\langle u,\bm r_h\cdot\bm n \right\rangle_{\partial{\mathcal{T}_h}}=0,
		\end{align*}
		since $\bm q+\nabla u=0$. Also, since $\langle  \bm q\cdot\bm n,\widehat{v}_h  \rangle_{\partial{\mathcal{T}_h}}=0$, we have
		\begin{align*}
		(\nabla\cdot\bm \Pi^o_k\bm{q}, v_h)_{\mathcal{T}_h}
		-\langle \bm \Pi^o_k\bm{q}\cdot\bm n,\widehat{v}_h \rangle_{\partial{\mathcal{T}_h}}&=((\nabla\cdot\bm \Pi^o_k\bm{q}, v_h^\star)_{\mathcal{T}_h}
		-\langle \bm \Pi^o_k\bm{q}\cdot\bm n,\widehat{v}_h \rangle_{\partial{\mathcal{T}_h}}\\
		&=		(\nabla\cdot\bm{q}, v^\star_h)_{\mathcal{T}_h}
		-\langle ( \bm\Pi^o_k\bm{q}-\bm q)\cdot\bm n,\widehat{v}_h -v_h^\star \rangle_{\partial{\mathcal{T}_h}}\\
		&=(-\Delta u, v^\star_h)_{\mathcal{T}_h}
		-\langle ( \bm\Pi^o_k\bm{q}-\bm q)\cdot\bm n,\widehat{v}_h -v_h^\star \rangle_{\partial{\mathcal{T}_h}}.
		\end{align*}
		As a consequence,
%		\begin{subequations}\label{projection_error2}
			\begin{align*}
			(\bm \Pi^o_k\bm{q},\bm{r}_h)_{\mathcal{T}_h}-(\Pi^o_{\ell} u,\nabla\cdot \bm{r}_h)_{\mathcal{T}_h}+\langle\Pi^{\partial}_{k}u,\bm r_h\cdot\bm n \rangle_{\partial{\mathcal{T}_h}} &= 0,%\label{projection_error2_a}\\
			\\
			(\nabla\cdot\bm \Pi^o_k\bm{q}, v_h)_{\mathcal{T}_h}-\langle \bm \Pi^o_k\bm{q}\cdot\bm n,\widehat{v}_h \rangle_{\partial{\mathcal{T}_h}} \quad %& \nonumber\\
			+\langle
			h_K^{-1}( \Pi^{\partial}_k \Pi_{k+1}^\star u -\Pi^{\partial}_{k}u),\Pi^{\partial}_kv_h^\star-\widehat{v}_h\rangle_{\partial{\mathcal{T}_h}} &= 
			(-\Delta u,v_h^\star)_{\mathcal{T}_h}\nonumber\\
			& \quad +E_h(\bm q,u;v_h,\widehat{v}_h).%\label{projection_error2_b}
			\end{align*}
%		\end{subequations}\
		The wanted equations can be now obtained by subtracting these equations from the equations defining the HDG elliptic approximation \eqref{elliptic-projection}.
	{This completes the proof.  } \end{proof}
	
	\subsection*{Step 2: Estimate for $\varepsilon_h^q$ by an energy argument}
	\label{sec:energy_argument_q}

	\begin{lemma}\label{theorem_err_u2}
		We have
		\begin{align*}
		\hspace{1em}&\hspace{-1em} \|\nabla\varepsilon_h^{u^\star}\|_{\mathcal T_h} +\|\varepsilon_h^{\bm q}\|_{\mathcal T_h} +\|h_K^{-1/2}(\Pi^{\partial}_k\varepsilon_h^{u^\star}-\varepsilon_h^{\widehat{u}})\|_{\partial\mathcal{T}_h}\\
		&\le C\left(
		h\|(\Pi_{\ell}^{o}-\mathbb I)(-\Delta u)\|_{\mathcal{T}_h}
		+h^{1/2}\|\bm \Pi^o_k\bm{q}-\bm q\|_{\partial\mathcal{T}_h}+
		\|h_K^{-1/2}(\Pi_{k+1}^\star u -u)\|_{\partial{\mathcal{T}_h}}
		\right).
		\end{align*}
	\end{lemma}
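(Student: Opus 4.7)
The plan is to combine an energy argument on the error equations from \Cref{error_u} with the stability of the HHO-type gradient reconstruction underlying the postprocessing $\mathfrak{p}_h^{k+1}$. The final estimate should follow by closing a loop between the flux error norm $\|\varepsilon_h^{\bm q}\|$, the stabilization seminorm, and $\|\nabla\varepsilon_h^{u^\star}\|$.

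First, I would take $(\bm r_h,v_h,\widehat v_h):=(\varepsilon_h^{\bm q},\varepsilon_h^u,\varepsilon_h^{\widehat u})$ in the two equations of \Cref{error_u} and add them. The terms $-(\varepsilon_h^u,\nabla\cdot\varepsilon_h^{\bm q})+(\nabla\cdot\varepsilon_h^{\bm q},\varepsilon_h^u)$ and $\langle\varepsilon_h^{\widehat u},\varepsilon_h^{\bm q}\cdot\bm n\rangle-\langle\varepsilon_h^{\bm q}\cdot\bm n,\varepsilon_h^{\widehat u}\rangle$ cancel, producing the energy identity
\[
\|\varepsilon_h^{\bm q}\|^2_{\mathcal T_h}+\|h_K^{-1/2}(\Pi_k^\partial\varepsilon_h^{u^\star}-\varepsilon_h^{\widehat u})\|^2_{\partial\mathcal T_h}=RHS_h,
\]
where $v_h^\star=\mathfrak{p}_h^{k+1}(\varepsilon_h^u,\varepsilon_h^{\widehat u})=\varepsilon_h^{u^\star}$. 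The postprocessing property \eqref{pp2} implies $\Pi_\ell^o\varepsilon_h^{u^\star}=\varepsilon_h^u$, so $(\mathbb I-\Pi_\ell^o)\varepsilon_h^{u^\star}=\varepsilon_h^{u^\star}-\varepsilon_h^u$, and the Poincar\'e-type bound gives $\|(\mathbb I-\Pi_\ell^o)\varepsilon_h^{u^\star}\|_K\le C h_K\|\nabla\varepsilon_h^{u^\star}\|_K$.

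Next, I would bound every term of $RHS_h$ by Cauchy--Schwarz. The volume contribution is $\le C h\,\|(\mathbb I-\Pi_\ell^o)(-\Delta u)\|_{\mathcal T_h}\|\nabla\varepsilon_h^{u^\star}\|_{\mathcal T_h}$. For the boundary terms in $E_h$, I would split
\[
\varepsilon_h^{\widehat u}-\varepsilon_h^{u^\star}=(\varepsilon_h^{\widehat u}-\Pi_k^\partial\varepsilon_h^{u^\star})-(\mathbb I-\Pi_k^\partial)\varepsilon_h^{u^\star}
\]
and apply the polynomial trace-approximation bound $\|h_K^{-1/2}(\mathbb I-\Pi_k^\partial)\varepsilon_h^{u^\star}\|_{\partial K}\le C\|\nabla\varepsilon_h^{u^\star}\|_K$. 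Together these give
\[
RHS_h\le C\,\mathcal D\cdot\bigl(\|\nabla\varepsilon_h^{u^\star}\|_{\mathcal T_h}+\|h_K^{-1/2}(\Pi_k^\partial\varepsilon_h^{u^\star}-\varepsilon_h^{\widehat u})\|_{\partial\mathcal T_h}\bigr),
\]
where $\mathcal D=h\|(\mathbb I-\Pi_\ell^o)(-\Delta u)\|_{\mathcal T_h}+h^{1/2}\|\bm\Pi_k^o\bm q-\bm q\|_{\partial\mathcal T_h}+\|h_K^{-1/2}(\Pi_{k+1}^\star u-u)\|_{\partial\mathcal T_h}$ is exactly the right-hand side of the target inequality.

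The main obstacle is to close the loop by controlling $\|\nabla\varepsilon_h^{u^\star}\|_{\mathcal T_h}$ in terms of $\|\varepsilon_h^{\bm q}\|_{\mathcal T_h}$ and the stabilization seminorm. Here I would reuse the argument from the proof of \Cref{super_con}: taking $\bm r_h=\nabla z_h$ with $z_h\in[\mathcal P^{k+1}_\ell(K)]^\perp$ in the first error equation of \Cref{error_u}, and subtracting the definition \eqref{pp1} of $\mathfrak{p}_h^{k+1}$ applied to $\varepsilon_h^{u^\star}$, the exact-solution terms cancel to give the HHO-type orthogonality
\[
(\varepsilon_h^{\bm q}+\nabla\varepsilon_h^{u^\star},\nabla z_h)_K=0\qquad\forall\,z_h\in[\mathcal P^{k+1}_\ell(K)]^\perp.
\]
Testing this with $z_h=\varepsilon_h^{u^\star}-\varepsilon_h^u\in[\mathcal P^{k+1}_\ell(K)]^\perp$ handles the part of $\nabla\varepsilon_h^{u^\star}$ in $\nabla[\mathcal P^{k+1}_\ell]^\perp$, while for the complementary component I would take $\bm r_h=\nabla\varepsilon_h^u$ (which lies in $\bm V_h$ since $\ell-1\le k$) in the first error equation, integrate by parts, and absorb the resulting boundary terms via the identity $\varepsilon_h^u-\varepsilon_h^{\widehat u}=(\Pi_k^\partial\varepsilon_h^{u^\star}-\varepsilon_h^{\widehat u})-(\mathbb I-\Pi_k^\partial)\varepsilon_h^{u^\star}+\Pi_k^\partial(\varepsilon_h^u-\varepsilon_h^{u^\star})$ together with the trace-approximation bound above. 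Combining both pieces yields
\[
\|\nabla\varepsilon_h^{u^\star}\|_{\mathcal T_h}\le C\bigl(\|\varepsilon_h^{\bm q}\|_{\mathcal T_h}+\|h_K^{-1/2}(\Pi_k^\partial\varepsilon_h^{u^\star}-\varepsilon_h^{\widehat u})\|_{\partial\mathcal T_h}\bigr).
\]
Inserting this closure into the bound for $RHS_h$ and applying Young's inequality with a small parameter absorbs $\|\varepsilon_h^{\bm q}\|^2$ and the stabilization term into the left-hand side of the energy identity, leaving $C\mathcal D^2$ on the right. A final application of the closure then produces the $\|\nabla\varepsilon_h^{u^\star}\|$ contribution, yielding the three-part estimate of the lemma. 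The hardest part is the closure step, which has to be carried out uniformly across the three choices $\ell\in\{k-1,k,k+1\}$, since $[\mathcal P^{k+1}_\ell]^\perp$ and the embedding $\nabla\varepsilon_h^u\in\bm V_h$ both behave differently in each case.
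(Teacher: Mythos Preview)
Your overall strategy---energy identity from the error equations plus a closure inequality controlling $\|\nabla\varepsilon_h^{u^\star}\|$ by $\|\varepsilon_h^{\bm q}\|$ and the stabilization seminorm---matches the paper exactly, and your bounds on the three pieces of $RHS_h$ are identical to the paper's $R_1,R_2,R_3$ estimates.

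The only substantive difference is in the closure step. You split $\nabla\varepsilon_h^{u^\star}=\nabla(\varepsilon_h^{u^\star}-\varepsilon_h^u)+\nabla\varepsilon_h^u$, handle the first piece via the orthogonality $(\varepsilon_h^{\bm q}+\nabla\varepsilon_h^{u^\star},\nabla z_h)_K=0$ on $[\mathcal P^{k+1}_\ell]^\perp$, and then treat $\nabla\varepsilon_h^u$ separately by testing the first error equation with $\bm r_h=\nabla\varepsilon_h^u$ and decomposing the resulting boundary term. The paper instead isolates the closure as an auxiliary lemma (\Cref{ener_q_ustar}) and proves it in one stroke: since $\nabla\cdot\bm r_h\in W_h$ and $\Pi_\ell^o\varepsilon_h^{u^\star}=\varepsilon_h^u$ by \eqref{pp2}, one may replace $\varepsilon_h^u$ by $\varepsilon_h^{u^\star}$ in the first error equation, integrate by parts, and use $\bm r_h\cdot\bm n\in\mathcal P^k(e)$ to obtain
\[
(\varepsilon_h^{\bm q},\bm r_h)_{\mathcal T_h}+(\nabla\varepsilon_h^{u^\star},\bm r_h)_{\mathcal T_h}+\langle\varepsilon_h^{\widehat u}-\Pi_k^\partial\varepsilon_h^{u^\star},\bm r_h\cdot\bm n\rangle_{\partial\mathcal T_h}=0\qquad\forall\,\bm r_h\in\bm V_h.
\]
Since $\nabla\varepsilon_h^{u^\star}\in[\mathcal P^k]^d=\bm V_h$, taking $\bm r_h=\nabla\varepsilon_h^{u^\star}$ (and $\bm r_h=\varepsilon_h^{\bm q}$ for the reverse inequality) gives the closure immediately, uniformly in $\ell\in\{k-1,k,k+1\}$, with no decomposition and no case distinction. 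Your route should work, but the substitution trick makes the ``hardest part'' you flagged essentially trivial.
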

	{This result implies the estimate for the 
	approximate flux in \Cref{error_dual2}.}
	To prove this lemma, we need the following auxiliary result.
		\begin{lemma} \label{ener_q_ustar}
		We have
		\begin{subequations}
			\begin{align}
			\|\varepsilon_h^{\bm q}\|_{\mathcal{T}_h}& \le C\left( \|\nabla \varepsilon_h^{u^\star}\|_{\mathcal{T}_h}
			+\|h_K^{-1/2}(\Pi_{k}^{\partial}\varepsilon_h^{u^\star}-\varepsilon_h^{\widehat{u}})\|_{\partial\mathcal{T}_h}\right),\\
			\|\nabla \varepsilon_h^{u^\star}\|_{\mathcal{T}_h}& \le \left(  \|\varepsilon_h^{\bm q}\|_{\mathcal{T}_h}
			+\|h_K^{-1/2}(\Pi_{k}^{\partial}\varepsilon_h^{u^\star}-\varepsilon_h^{\widehat{u}})\|_{\partial\mathcal{T}_h}\right).
			%\label{eq2}
			\end{align}
		\end{subequations}
	\end{lemma}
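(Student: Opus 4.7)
The plan is to derive a single "master identity" from the first error equation of \Cref{error_u} and then read off both estimates by choosing two different test functions. The decisive observation is that the postprocessing relation \eqref{pp2} gives $\Pi_\ell^o \varepsilon_h^{u^\star} = \varepsilon_h^u$, since $\varepsilon_h^{u^\star} = \mathfrak{p}_h^{k+1}(\varepsilon_h^u,\varepsilon_h^{\widehat u})$ and $\varepsilon_h^u\in \mathcal P^\ell(K)$. Combined with the fact that $\ell\ge k-1$ for all three HDG (ABC) methods (so that $\nabla\cdot \bm r_h\in\mathcal P^{k-1}(K)\subseteq \mathcal P^\ell(K)$ for any $\bm r_h\in \bm V_h$), this allows one to swap $\varepsilon_h^u$ for $\varepsilon_h^{u^\star}$ in the volume term of the first error equation.

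Specifically, for any $\bm r_h\in\bm V_h$, starting from the first error equation of \Cref{error_u}, replacing $(\varepsilon_h^u,\nabla\cdot\bm r_h)_K$ by $(\varepsilon_h^{u^\star},\nabla\cdot\bm r_h)_K$ and integrating by parts once, I would obtain
\[
(\varepsilon_h^{\bm q}+\nabla\varepsilon_h^{u^\star},\bm r_h)_K=\langle \varepsilon_h^{u^\star}-\varepsilon_h^{\widehat u},\bm r_h\cdot\bm n\rangle_{\partial K}.
\]
Since $\bm r_h\cdot\bm n|_e\in\mathcal P^k(e)$ and $\varepsilon_h^{\widehat u}|_e\in\mathcal P^k(e)$, the jump on the right can be rewritten as $\langle \Pi_k^\partial\varepsilon_h^{u^\star}-\varepsilon_h^{\widehat u},\bm r_h\cdot\bm n\rangle_{\partial K}$. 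This is the master identity, valid for all $\bm r_h\in \bm V_h$.

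From this, the two claims follow by two admissible test choices and a discrete trace (inverse) inequality. Testing with $\bm r_h=\varepsilon_h^{\bm q}\in\bm V_h$ gives
\[
\|\varepsilon_h^{\bm q}\|_K^2=-(\nabla\varepsilon_h^{u^\star},\varepsilon_h^{\bm q})_K+\langle \Pi_k^\partial\varepsilon_h^{u^\star}-\varepsilon_h^{\widehat u},\varepsilon_h^{\bm q}\cdot\bm n\rangle_{\partial K},
\]
and Cauchy--Schwarz together with $\|h_K^{1/2}\varepsilon_h^{\bm q}\cdot\bm n\|_{\partial K}\le C\|\varepsilon_h^{\bm q}\|_K$ yields (a). Testing with $\bm r_h=\nabla\varepsilon_h^{u^\star}$, which lies in $[\mathcal P^k(K)]^d=\bm V_h|_K$ because $\varepsilon_h^{u^\star}\in\mathcal P^{k+1}(K)$, gives
\[
\|\nabla\varepsilon_h^{u^\star}\|_K^2=-(\varepsilon_h^{\bm q},\nabla\varepsilon_h^{u^\star})_K+\langle \Pi_k^\partial\varepsilon_h^{u^\star}-\varepsilon_h^{\widehat u},\nabla\varepsilon_h^{u^\star}\cdot\bm n\rangle_{\partial K},
\]
and the same Cauchy--Schwarz/inverse trace argument yields (b). Summing over $K\in\mathcal T_h$ finishes the proof.

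The only non-mechanical step is the derivation of the master identity, whose key ingredient is that property \eqref{pp2} allows one to replace $\varepsilon_h^u$ by $\varepsilon_h^{u^\star}$ against any divergence of a function in $\bm V_h$; once this replacement is made, the rest is integration by parts and standard finite-element inequalities. The hypothesis $\ell\in\{k-1,k,k+1\}$ intrinsic to the HDG (ABC) family is precisely what makes this replacement legitimate, and I expect this to be the only subtle point.
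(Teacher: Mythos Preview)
Your proposal is correct and follows essentially the same approach as the paper: replace $\varepsilon_h^u$ by $\varepsilon_h^{u^\star}$ in the first error equation of \Cref{error_u} via \eqref{pp2} and the fact that $\nabla\cdot\bm r_h\in W_h$, integrate by parts, insert $\Pi_k^\partial$ using $\bm r_h\cdot\bm n\in\mathcal P^k(e)$, and then test with $\bm r_h=\varepsilon_h^{\bm q}$ and $\bm r_h=\nabla\varepsilon_h^{u^\star}$. You are somewhat more explicit than the paper about why the replacement is legitimate (spelling out $\ell\ge k-1$), but the argument is the same.
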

	\begin{proof}
Using the first equation of  \Cref{error_u}, the definition of $\mathfrak{p}_h^{k+1}$ in \eqref{post}, and $\nabla\cdot\bm r_h\in W_h$,  we have
		\begin{align*}
		(\varepsilon_h^{\bm q},\bm{r}_h)_{\mathcal{T}_h}-(\varepsilon_h^{u^\star},\nabla\cdot \bm{r}_h)_{\mathcal{T}_h}+\langle\varepsilon_h^{\widehat{u}},\bm r_h\cdot\bm n \rangle_{\partial{\mathcal{T}_h}} = 0.
		\end{align*}
		Integration by parts gives
		\begin{align*}
		(\varepsilon_h^{\bm q},\bm{r}_h)_{\mathcal{T}_h}+(\nabla\varepsilon_h^{u^\star}, \bm{r}_h)_{\mathcal{T}_h}+\langle\varepsilon_h^{\widehat{u}}-\Pi_{k}^{\partial}\varepsilon_h^{u^\star},\bm r_h\cdot\bm n \rangle_{\partial{\mathcal{T}_h}} = 0.
		\end{align*}
		Since $\nabla \varepsilon_h^{u^*}\in \bm V_h$, by taking first $\bm r_h:=\varepsilon_h^{\bm q}$ and then $\bm r_h:=\nabla \varepsilon_h^{u^*}$, one gets
		\begin{align*}
		\|\varepsilon_h^{\bm q}\|_{\mathcal{T}_h}& \le C\left( \|\nabla \varepsilon_h^{u^\star}\|_{\mathcal{T}_h}
		+\|h_K^{-1/2}(\Pi_{k}^{\partial}\varepsilon_h^{u^\star}-\varepsilon_h^{\widehat{u}})\|_{\partial\mathcal{T}_h}\right),\\
		\|\nabla \varepsilon_h^{u^\star}\|_{\mathcal{T}_h}&\le C\left( \|\varepsilon_h^{\bm q}\|_{\mathcal{T}_h}
		+\|h_K^{-1/2}(\Pi_{k}^{\partial}\varepsilon_h^{u^\star}-\varepsilon_h^{\widehat{u}})\|_{\partial\mathcal{T}_h}\right),
		\end{align*}
		respectively. {This completes the proof.  } \end{proof}

	We can now prove  \Cref{theorem_err_u2}.
	\begin{proof}
	We take $(\bm r_h,v_h,\widehat{v}_h):=(\varepsilon_h^{\bm q},\varepsilon_h^{u},\varepsilon_h^{\widehat u})$ in the error equations of  \Cref{error_u}, and add them to get
	\[
	\|\varepsilon_h^{\bm q}\|^2_{\mathcal{T}_h}+\|h_K^{-1/2}( \Pi^{\partial}_k \varepsilon_h^{u^\star} -\varepsilon_h^{\widehat{u}})\|^2_{\partial{\mathcal{T}_h}}
	=  R_1+R_2+R_3,
	\]
	where
	\begin{alignat*}{1}
	R_1&:=
	((\mathbb I-\Pi_{\ell}^o)(-\Delta u), (\mathbb I-\Pi_{\ell}^o)\varepsilon_h^{u^\star})_{\mathcal{T}_{h}},
		\\
	R_2&:=-\langle( \bm \Pi^o_k\bm{q}-\bm q)\cdot\bm n,\varepsilon^{\widehat{u}}_h-\varepsilon_h^{u^\star} \rangle_{\partial{\mathcal{T}_h}}\nonumber
	\\
	R_3&:=\langle
	h_K^{-1}( \Pi_{k+1}^\star u -u),\Pi^{\partial}_k\varepsilon_h^{u^\star} -\varepsilon^{\widehat{u}}_h\rangle_{\partial\mathcal{T}_h}.
	\end{alignat*}
		Since
		\begin{align*}
		|R_1|&\le Ch\| (\mathbb I-\Pi_{\ell}^{o})(-\Delta u)\|_{\mathcal{T}_h}\|\nabla\varepsilon_h^{u^\star} \|_{\mathcal{T}_h}, \nonumber\\
		%
		%&\le Ch\| (\mathbb I-\Pi_{\ell}^{o})(-\Delta u)\|_{\mathcal{T}_h}
		%\left(  \|\varepsilon_h^{\bm q}\|_{\mathcal{T}_h}+\|h_K^{-1/2}( \Pi^{\partial}_k \varepsilon_h^{u^\star} -\varepsilon_h^{\widehat{u}})\|_{\partial{\mathcal{T}_h}} \right), \\
		%
		|R_2|&\le Ch^{1/2}\|\bm \Pi^o_k\bm{q}-\bm q\|_{\partial\mathcal{T}_h}
		\left(  \|\nabla\varepsilon_h^{u^\star} \|_{\mathcal{T}_h}+\|h_K^{-1/2}( \Pi^{\partial}_k \varepsilon_h^{u^\star} -\varepsilon_h^{\widehat{u}})\|_{\partial{\mathcal{T}_h}} \right), \nonumber\\
		%
		%&\le Ch^{1/2}\|\bm \Pi^o_k\bm{q}-\bm q\|_{\partial\mathcal{T}_h}
		%\left(  \|\varepsilon_h^{\bm q} \|_{\mathcal{T}_h}+\|h_K^{-1/2}( \Pi^{\partial}_k \varepsilon_h^{u^\star} -\varepsilon_h^{\widehat{u}})\|_{\partial{\mathcal{T}_h}} \right), \\
		%
		|R_3|&\le \|h_K^{-1/2}(\Pi_{k+1}^\star u -u)\|_{\partial{\mathcal{T}_h}}\|h_K^{-1/2}( \Pi^{\partial}_k \varepsilon_h^{u^\star} -\varepsilon_h^{\widehat{u}})\|_{\partial{\mathcal{T}_h}},
		\end{align*}
		using the last two estimates of \Cref{ener_q_ustar} and simple algebraic manipulations, we get the  desired result. 
	\end{proof}

	\subsection*{Step 3: Estimate for $\varepsilon_h^{u^\star}$ by a {duality} argument}
	
	\begin{lemma} Assume that the elliptic regularity inequality \eqref{regular} holds. Then, we have
		\begin{align*}
		{\|\varepsilon_h^{u^\star}\|_{\mathcal{T}_h}}
		%+\|\varepsilon_h^{u}\|_{\mathcal{T}_h} 
		&\le C h^{1+\min\{\ell,1\}}\|(\mathbb I- \Pi_{\ell}^o)(-\Delta u)\|_{\mathcal{T}_h}\nonumber\\
		&\quad+C(h^{3/2}\|\bm \Pi^o_k\bm{q}-\bm q\|_{\partial\mathcal{T}_h}+
		h\|h_K^{-1/2}(\Pi_{k+1}^\star u -u)\|_{\partial{\mathcal{T}_h}}).
		\end{align*}
	\end{lemma}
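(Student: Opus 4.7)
The plan is to carry out an Aubin--Nitsche duality argument, exploiting the self-adjointness of the HDG (ABC) bilinear form to gain the extra power of $h$ beyond what \Cref{theorem_err_u2} already provides. Setting $g := \varepsilon_h^{u^\star}$ in the dual problem \eqref{Dual_PDE}, we have
\begin{align*}
\|\varepsilon_h^{u^\star}\|^2_{\mathcal T_h} = (\varepsilon_h^{u^\star}, \nabla\cdot\bm\Phi)_{\mathcal T_h},
\qquad
\|\bm\Phi\|_1 + \|\Psi\|_2 \le C \,\|\varepsilon_h^{u^\star}\|_{\mathcal T_h},
\end{align*}
where the second bound is the elliptic regularity \eqref{regular}. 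The same derivation that produced \Cref{error_u} applies with $(\bm q,u)$ replaced by $(\bm\Phi,\Psi)$ and $-\Delta u$ replaced by $g$, producing a pair of equations satisfied by $(\bm\Pi_k^o\bm\Phi,\Pi_\ell^o\Psi,\Pi_k^\partial\Psi)$ whose second equation has right-hand side $(g,v_h^\star)_{\mathcal T_h}+E_h(\bm\Phi,\Psi;v_h,\widehat v_h)$.

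Next I would test the error equations of \Cref{error_u} with $(\bm r_h,v_h,\widehat v_h):=(\bm\Pi_k^o\bm\Phi,\Pi_\ell^o\Psi,\Pi_k^\partial\Psi)$, in which case $v_h^\star=\Pi_{k+1}^\star\Psi$, and test the dual projection equations with $(\varepsilon_h^{\bm q},\varepsilon_h^u,\varepsilon_h^{\widehat u})$, in which case $v_h^\star=\varepsilon_h^{u^\star}$. The underlying HDG bilinear form is symmetric --- the volume and trace pairings are symmetric, and the stabilization reads symmetrically in its two three-term arguments --- so combining the four tests with signs $+,-,+,-$ causes every bilinear coupling to cancel in pairs, leaving
\begin{align*}
\|\varepsilon_h^{u^\star}\|^2_{\mathcal T_h}
= \big((\mathbb I-\Pi_\ell^o)(-\Delta u),\,(\mathbb I-\Pi_\ell^o)\Pi_{k+1}^\star\Psi\big)_{\mathcal T_h}
+ E_h(\bm q,u;\Pi_\ell^o\Psi,\Pi_k^\partial\Psi) - E_h(\bm\Phi,\Psi;\varepsilon_h^u,\varepsilon_h^{\widehat u}).
\end{align*}

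The first term is handled by splitting $(\mathbb I-\Pi_\ell^o)\Pi_{k+1}^\star\Psi=(\mathbb I-\Pi_\ell^o)(\Pi_{k+1}^\star\Psi-\Psi)+(\mathbb I-\Pi_\ell^o)\Psi$ and using the standard $L^2$-projection bound $\|(\mathbb I-\Pi_\ell^o)\Psi\|_{\mathcal T_h}\le Ch^{1+\min\{\ell,1\}}\|\Psi\|_2$ together with $\|\Pi_{k+1}^\star\Psi-\Psi\|_{\mathcal T_h}\le Ch^2\|\Psi\|_2$ from \Cref{lemma2:HHO_proj}; absorbing $\|\Psi\|_2$ through elliptic regularity gives exactly the first term of the claimed bound. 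For $E_h(\bm q,u;\Pi_\ell^o\Psi,\Pi_k^\partial\Psi)$, writing $\Pi_k^\partial\Psi-\Pi_{k+1}^\star\Psi=(\Pi_k^\partial\Psi-\Psi)+(\Psi-\Pi_{k+1}^\star\Psi)$ and invoking a trace inequality gives $\|\Pi_k^\partial\Psi-\Pi_{k+1}^\star\Psi\|_{\partial\mathcal T_h}\le Ch^{3/2}\|\Psi\|_2$, which paired against $\|(\bm\Pi_k^o\bm q-\bm q)\cdot\bm n\|_{\partial\mathcal T_h}$ produces the $h^{3/2}$ factor; meanwhile $\Pi_k^\partial\Pi_{k+1}^\star\Psi-\Pi_k^\partial\Psi=\Pi_k^\partial(\Pi_{k+1}^\star\Psi-\Psi)$ paired against $\|h_K^{-1/2}(\Pi_{k+1}^\star u-u)\|_{\partial\mathcal T_h}$ yields the third claimed term.

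The main obstacle is the last term, $E_h(\bm\Phi,\Psi;\varepsilon_h^u,\varepsilon_h^{\widehat u})$, which pairs approximation errors of the dual against the raw boundary mismatches $\varepsilon_h^{\widehat u}-\varepsilon_h^{u^\star}$ and $\Pi_k^\partial\varepsilon_h^{u^\star}-\varepsilon_h^{\widehat u}$. The second is controlled directly by \Cref{theorem_err_u2}. For the first, the crucial auxiliary estimate is
\begin{align*}
\|(\mathbb I-\Pi_k^\partial)\varepsilon_h^{u^\star}\|_{\partial K}\le Ch_K^{1/2}\|\nabla\varepsilon_h^{u^\star}\|_K,
\end{align*}
obtained by subtracting the element mean of $\varepsilon_h^{u^\star}$ and applying trace-plus-Poincar\'e; combined with \Cref{theorem_err_u2} this gives $\|\varepsilon_h^{u^\star}-\varepsilon_h^{\widehat u}\|_{\partial\mathcal T_h}\le Ch^{1/2}$ times the right-hand side of \Cref{theorem_err_u2}. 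Together with $\|(\bm\Pi_k^o\bm\Phi-\bm\Phi)\cdot\bm n\|_{\partial\mathcal T_h}\le Ch^{1/2}\|\bm\Phi\|_1$ and $\|h_K^{-1/2}(\Pi_{k+1}^\star\Psi-\Psi)\|_{\partial\mathcal T_h}\le Ch\|\Psi\|_2$, we find $|E_h(\bm\Phi,\Psi;\varepsilon_h^u,\varepsilon_h^{\widehat u})|\le C\|\varepsilon_h^{u^\star}\|_{\mathcal T_h}$ multiplied by contributions strictly sharper than the claimed right-hand side; dividing through by $\|\varepsilon_h^{u^\star}\|_{\mathcal T_h}$ concludes the proof.
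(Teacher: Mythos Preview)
Your proposal is correct and follows essentially the same duality argument as the paper's proof. You present the cancellation as a single symmetry observation (testing the four equations and combining with alternating signs), whereas the paper carries out the substitutions sequentially, but the resulting identity and the subsequent term-by-term bounds are identical; in particular, your auxiliary estimate $\|(\mathbb I-\Pi_k^\partial)\varepsilon_h^{u^\star}\|_{\partial K}\le Ch_K^{1/2}\|\nabla\varepsilon_h^{u^\star}\|_K$ is exactly what the paper uses implicitly when it bounds $\langle(\bm\Pi_k^o\bm\Phi-\bm\Phi)\cdot\bm n,\varepsilon_h^{\widehat u}-\varepsilon_h^{u^\star}\rangle_{\partial\mathcal T_h}$ by $Ch\|\nabla\varepsilon_h^{u^\star}\|_{\mathcal T_h}|\bm\Phi|_1$.
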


	\begin{proof} 
		{Setting $g:=\varepsilon_h^{u^\star}$ in the dual problem, and proceeding as in the proof of} \Cref{error_u}, we get 
		\begin{subequations}
			\begin{align}
			(\bm \Pi^o_k\bm{\Phi},\bm{r}_h)_{\mathcal{T}_h}-(\Pi^o_{\ell} \Psi,\nabla\cdot \bm{r}_h)_{\mathcal{T}_h}+\langle\Pi^{\partial}_{k}\Psi,\bm r_h\cdot\bm n \rangle_{\partial{\mathcal{T}_h}} &= 0, \label{dual_1}\\
			(\nabla\cdot\bm \Pi^o_k\bm{\Phi}, v_h)_{\mathcal{T}_h}
			-\langle \bm \Pi^o_k\bm{\Phi}\cdot\bm n,\widehat{v}_h \rangle_{\partial{\mathcal{T}_h}} \quad&
			\nonumber\\+\langle
			h_K^{-1}( \Pi^{\partial}_k \Pi_{k+1}^\star\Psi -\Pi^{\partial}_{k}\Psi),\Pi^{\partial}_kv_h^\star-\widehat{v}_h\rangle_{\partial{\mathcal{T}_h}} &= (\varepsilon_h^{u^\star},v^\star_h)_{\mathcal{T}_h}+E_h(\bm \Phi,\Psi;v_h,\widehat{v}_h),\label{dual_2}
			\end{align}
		\end{subequations}
		where
		\begin{align*}
		E_h(\bm \Phi,\Psi;v_h,\widehat{v}_h)=-\langle( \bm \Pi^o_k\bm{\Phi}-\bm \Phi)\cdot\bm n,\widehat{v}_h-v_h^\star \rangle_{\partial{\mathcal{T}_h}}+\langle
		h_K^{-1}( \Pi_{k+1}^\star{\Psi} -\Psi),\Pi^{\partial}_kv_h^\star-\widehat{v}_h\rangle_{\partial{\mathcal{T}_h}}.
		\end{align*}
		Then taking $(v_h,\widehat{v}_h):=(\varepsilon_h^u,\varepsilon_h^{\widehat{u}})$ in \eqref{dual_2}, we get
		\begin{align*}
		\|\varepsilon_h^{u^\star}\|^2_{\mathcal{T}_h}
		&=(\nabla\cdot\bm \Pi^o_k\bm{\Phi}, \varepsilon_h^u)_{\mathcal{T}_h}
		-\langle \bm \Pi^o_k\bm{\Phi}\cdot\bm n,\varepsilon_h^{\widehat{u}} \rangle_{\partial{\mathcal{T}_h}}
		\\
		&\quad+\langle
		h_K^{-1}( \Pi^{\partial}_k \Pi_{k+1}^\star\Psi -\Pi^{\partial}_{k}\Psi),\Pi^{\partial}_kv_h^\star-\widehat{v}_h\rangle_{\partial{\mathcal{T}_h}}{-E_h(\bm \Phi,\Psi;\varepsilon^{u}_h,\varepsilon^{\widehat{u}}_h)}
		\\
		&= (\varepsilon_h^{\bm q},\bm \Pi^o_k\bm{\Phi})_{\mathcal{T}_h} + \langle
		h_K^{-1}( \Pi^{\partial}_k \Pi_{k+1}^\star\Psi -\Pi^{\partial}_{k}\Psi),\Pi^{\partial}_k\varepsilon_h^{u^\star}-\varepsilon_h^{\widehat{u}}\rangle_{\partial{\mathcal{T}_h}}{-E_h(\bm \Phi,\Psi;\varepsilon^{u}_h,\varepsilon^{\widehat{u}}_h)},
		\intertext{{by the first equation of Lemma \ref{error_u} with $ \bm{r}_h:=\bm \Pi^o_k\bm{\Phi}$.}
		By \eqref{dual_1} {with $\bm{r}_h:=\varepsilon_h^{\bm q}$}, we obtain
		}
		\|\varepsilon_h^{u^\star}\|^2_{\mathcal{T}_h}&=
		(\Pi^o_{\ell} \Psi,\nabla\cdot \varepsilon_h^{\bm q})_{\mathcal{T}_h}\
		-\langle\Pi^{\partial}_{k}\Psi,\varepsilon_h^{\bm q}\cdot\bm n \rangle_{\partial{\mathcal{T}_h}}
		+\langle
		h_K^{-1}( \Pi^{\partial}_k \Pi_{k+1}^\star\Psi -\Pi^{\partial}_{k}\Psi),\Pi^{\partial}_k\varepsilon_h^{u^\star}-\varepsilon_h^{\widehat{u}}\rangle_{\partial{\mathcal{T}_h}}
		\\
		&\quad-E_h(\bm \Phi,\Psi;\varepsilon^{u}_h,\varepsilon^{\widehat{u}}_h)\nonumber\\
		&=({(\mathbb I-\Pi_{\ell}^o)(-\Delta u)},\Pi_{k+1}^\star\Psi-\Pi_{\ell}^o\Psi)+ E_h(\bm q,u;\Pi^{o}_{\ell}\Psi,\Pi^{\partial}_{k}\Psi)-E_h(\bm \Phi,\Psi;\varepsilon^{u}_h,\varepsilon^{\widehat{u}}_h),
		\end{align*}
		{by the second equation of Lemma \ref{error_u} with $(v_h,\widehat{v}_h):=(\Pi^{o}_{\ell}\Psi,\Pi^{\partial}_{k}\Psi)$.
		Inserting the definitions of the $E_h$-terms}, we finally get 
		\begin{alignat*}{2}
		\|\varepsilon_h^{u^\star}\|^2_{\mathcal{T}_h}
		&= ((\mathbb I-\Pi_{\ell}^o)(-\Delta u),\Pi_{k+1}^\star\Psi-\Pi_{\ell}^o\Psi)
		\\
		&\quad -\langle( \bm \Pi^o_k\bm{q}-\bm q)\cdot\bm n,\Pi^{\partial}_{k}\Psi-\Pi^\star_{k+1}\Psi \rangle_{\partial{\mathcal{T}_h}}
		&&+\langle
		h_K^{-1}( \Pi_{k+1}^\star u -u),\Pi^{\partial}_k\Pi^\star_{k+1}\Psi-\Pi^{\partial}_{k}\Psi\rangle_{\partial{\mathcal{T}_h}}
		\\
		&\quad{+\langle( \bm \Pi^o_k\bm{\Phi}-\bm \Phi)\cdot\bm n,\varepsilon_h^{\widehat{u}}-\varepsilon_h^{u^\star}\rangle_{\partial{\mathcal{T}_h}}}\nonumber
		&&-\langle
		h_K^{-1}( \Pi_{k+1}^\star{\Psi} -\Psi),\Pi^{\partial}_k\varepsilon_h^{u^\star}-\varepsilon_h^{\widehat{u}}\rangle_{\partial{\mathcal{T}_h}},
		\end{alignat*}
		which leads to
		\begin{align*}
		\|\varepsilon_h^{u^\star}\|^2_{\mathcal{T}_h}&\le
		C h^{\min\{\ell,1\}+1}\|(\mathbb I- \Pi_{\ell}^o)(-\Delta u)\|_{\mathcal{T}_h}|\Psi|_{\min\{\ell,1\}+1} \nonumber\\
		&\quad +Ch^{3/2}\|\bm \Pi^o_k\bm{q}-\bm q\|_{\partial\mathcal{T}_h}
		|\Psi|_2 +Ch\|h_K^{-1/2}(\Pi_{k+1}^\star u -u)\|_{\partial{\mathcal{T}_h}}{|\Psi|_2}\nonumber\\
		&\quad+Ch\left(
		\|\nabla\varepsilon_h^{u^\star}\|_{\mathcal T_h} +\|h_K^{-1/2}(\Pi^{\partial}_k\varepsilon_h^{u^\star}-\varepsilon_h^{\widehat{u}})\|_{\partial\mathcal{T}_h}
		\right)( |\bm\Phi|_1+|\Psi|_2).
		\end{align*}
		Using the elliptic regularity inequality \eqref{regular} and the first inequality of  \Cref{ener_q_ustar}, we finally obtain the wanted result. 
		%	\begin{align*}
		%\|\varepsilon_h^{u^\star}\|_{\mathcal{T}_h} +  \|\varepsilon_h^{u}\|_{\mathcal{T}_h}&\le C h^{1+\min\{\ell,1\}}\|(\mathbb I- \Pi_{\ell}^o)(-\Delta u)\|_{\mathcal{T}_h}\nonumber\\
		%&\quad+C(h^{3/2}\|\bm \Pi^o_k\bm{q}-\bm q\|_{\partial\mathcal{T}_h}+
		%h\|h_K^{-1/2}(\Pi_{k+1}^\star u -u)\|_{\partial{\mathcal{T}_h}}).
	%	\end{align*}
	%	{This completes the proof.  }
	\end{proof}
	
	\subsection*{Step 4: Estimate for $u_h$}
		\begin{lemma} \label{lemma_u}
		We have that
		$\|\varepsilon_h^{u} \|_{\mathcal T_h}  \le \|\varepsilon_h^{u^\star} \|_{\mathcal T_h}.
		$
		\end{lemma}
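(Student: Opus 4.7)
The plan is to exhibit $\varepsilon_h^{u}$ as the $L^2$-projection of $\varepsilon_h^{u^\star}$ onto $W_h$ and then invoke the fact that orthogonal projections are contractions in $L^2$. The identity to be proved is a pointwise (element-by-element) statement, so it suffices to work on a fixed $K\in\mathcal{T}_h$ and sum.

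First I would observe that the defining property \eqref{pp2} of the postprocessing operator $\mathfrak{p}_h^{k+1}$ yields
\[
(\overline{u}_h^\star,w_h)_K=(\overline{u}_h,w_h)_K\qquad\forall\, w_h\in\mathcal{P}^\ell(K),
\]
since $\overline{u}_h^\star=\mathfrak{p}_h^{k+1}(\overline{u}_h,\widehat{\overline{u}}_h)$. Because $\overline{u}_h\in\mathcal{P}^\ell(K)$, this says exactly that $\Pi_\ell^o\overline{u}_h^\star=\overline{u}_h$. An identical computation applied to $\Pi_{k+1}^\star u=\mathfrak{p}_h^{k+1}(\Pi_\ell^o u,\Pi_k^\partial u)$ gives $\Pi_\ell^o\Pi_{k+1}^\star u=\Pi_\ell^o(\Pi_\ell^o u)=\Pi_\ell^o u$. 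Subtracting these two relations,
\[
\Pi_\ell^o\varepsilon_h^{u^\star}=\Pi_\ell^o u-\overline{u}_h=\varepsilon_h^{u}.
\]

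Finally, since $\Pi_\ell^o$ is the $L^2(K)$-orthogonal projection onto $\mathcal{P}^\ell(K)$, it is a contraction in $L^2(K)$, so $\|\varepsilon_h^{u}\|_K=\|\Pi_\ell^o\varepsilon_h^{u^\star}\|_K\le\|\varepsilon_h^{u^\star}\|_K$. Summing over $K\in\mathcal{T}_h$ yields the claimed bound. There is no real obstacle here: the only subtle point is recognizing that the second equation in the definition of the postprocessing forces the $L^2$-projection of $u_h^\star$ onto $W_h$ to coincide with $u_h$, which is exactly the structural feature of the HHO-inspired postprocessing that makes this step trivial.
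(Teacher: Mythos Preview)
Your proof is correct and is essentially the same as the paper's: both arguments rest on the identity $\Pi_\ell^o\varepsilon_h^{u^\star}=\varepsilon_h^{u}$, which follows from the second defining equation \eqref{pp2} of the postprocessing, and then use the $L^2$-contraction property of $\Pi_\ell^o$. The only cosmetic difference is that the paper obtains this identity by invoking the linearity of $\mathfrak{p}_h^{k+1}$ to write $\varepsilon_h^{u^\star}=\mathfrak{p}_h^{k+1}(\varepsilon_h^{u},\varepsilon_h^{\widehat u})$ directly, whereas you apply \eqref{pp2} to each of $\overline{u}_h^\star$ and $\Pi_{k+1}^\star u$ separately and subtract.
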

		Combining this result and the one in the previous step gives the estimate in the approximation for $u$ in \Cref{error_dual2}. {To complete the proof of  \Cref{error_dual2}, it only remains to prove the above lemma.}
	\begin{proof}
		Since $u_h^\star =\mathfrak{p}_h^{k+1}( u_h,\widehat{u}_h)$, $\Pi_{k+1}^\star u=\mathfrak{p}_h^{k+1}(\Pi^{o}_{\ell} u,\Pi^{\partial}_k u)$, and the operator $\mathfrak{p}_h^{k+1}$ is linear, we have that
		$
		\varepsilon_h^{u^\star} =\mathfrak{p}_h^{k+1}( \varepsilon_h^u,\varepsilon_h^{\widehat{u}})$. Proceeding as in the proof of \Cref{lemma1:HHO_proj}, it can be shown that {$ \varepsilon_h^u \in  [\mathcal P_{\ell}^{k+1}(K)]^{\perp}$}.  Then, by equation \eqref{pp2}, the wanted inequality follows.
		{This completes the proof.  } \end{proof}

	\section*{Acknowledgements}
	G.~Chen was supported by supported by National Natural Science Foundation of China (NSFC) grant 11801063, 
		and the Fundamental Research Funds for the Central Universities grant YJ202030. B.~Cockburn was 
	partially supported by National Science Foundation grant DMS-1912646. J.~Singler and Y.~Zhang were supported in part by National Science Foundation grant DMS-1217122. J.~Singler and Y.~Zhang thank the IMA for funding research visits, during which some of this work was completed.
		
	\bibliographystyle{plain}
	\bibliography{yangwen_ref_papers}

\end{document}